\theoremstyle{plain}
\newtheorem{lemma}{Lemma}[section]
\newtheorem{proposition}[lemma]{Proposition}
\newtheorem{corollary}[lemma]{Corollary}
\newtheorem{theorem}[lemma]{Theorem}
\theoremstyle{nonumberplain}
\theoremstyle{plain}
\newtheorem{definition}[lemma]{Definition}
\newtheorem{example}[lemma]{Example}
\newtheorem{remark}[lemma]{Remark}
\theoremstyle{nonumberplain}
\newtheorem{proof}{Proof}
\newcommand\bC{{\mathbb C}}
\newcommand\bN{{\mathbb N}}
\newcommand\bZ{{\mathbb Z}}
\newcommand\cC{{\mathcal C}}
\newcommand\cK{{\mathcal K}}
\begin{document}

\title{Noncommutative numerable principal bundles\\ from group actions on C*-algebras}
\author{
  Mariusz Tobolski
}

\date{}

\newcommand{\Addresses}{{
  \bigskip
  \footnotesize

  \textsc{Instytut Matematyczny, Uniwersytet Wroc\l{}awski, pl. Grunwaldzki 2/4, 50-384 Poland}\par\nopagebreak \textit{E-mail address}:
  \texttt{mariusz.tobolski@math.uni.wroc.pl}
}}

\maketitle

\begin{abstract}
We introduce a definition of the locally trivial \mbox{$G$-C*-algebra}, which is a noncommutative counterpart of the total space of a~locally compact Hausdorff numerable principal \mbox{$G$-bundle}. To obtain this generalization, we have to go beyond the Gelfand--Naimark duality and use the multipliers of the Pedersen ideal. Our new concept enables us to investigate local triviality of noncommutative principal bundles coming from group actions on non-unital C*-algebras, which we illustrate through examples coming from $C_0(Y)$-algebras and graph C*-algebras. In the case of an action of a~compact Hausdorff group on a unital \mbox{C*-algebra}, local triviality in our sense is implied by the finiteness of the local-triviality dimension of the action. Furthermore, we prove that if $A$ is a~locally trivial $G$-C*-algebra, then the $G$-action on $A$ is free in a certain sense, which in many cases coincides with the known notions of freeness due to Rieffel and Ellwood.
\end{abstract}

\noindent {\em Key words:} noncommutative topology, C*-algebra, multipliers of the Pedersen ideal, C*-dynamical system, quantum principal bundle

\vspace{.5cm}

\noindent{MSC: 46L85, 37A55}



\section{Introduction}

Noncommutative mathematics (see e.g.~\cite{a-c94,fs-16}), motivated by the Heisenberg approach to quantum mechanics, is a way of generalizing certain classical mathematical theories which can be described in terms of commutative algebras, e.g. topology, measure theory, differential geometry, and group theory. In particular, the theory of locally compact Hausdorff spaces is (anti-)equivalent to the theory of commutative C*-algebras via the Gelfand--Naimark duality (see \S~\ref{nctop}). This leads to the philosophy of viewing noncommutative C*-algebras as algebras of functions on noncommutative spaces. The main idea of noncommutative topology is to transfer the important notions and results about locally compact Hausdorff spaces via the aforementioned equivalence and then check whether the assumption of commutativity can be dropped. In most cases, this process is far from obvious and it profits both sides of the equivalence. For instance, the recent breakthrough in the Elliott classification program of simple separable nuclear unital C*-algebras was made using the notion of a nuclear dimension~\cite{wz-10}, which generalizes the covering dimension of a locally compact Hausdorff space. 

The importance of principal bundles in topology and differential geometry can hardly be overestimated. They  also form the mathematical language of the classical gauge theory in physics (see e.g.~\cite{gl-n11}). The aim of the paper is to find and study the most general noncommutative analog of a principal bundle. This goal has already been achieved in the case when the spaces involved are compact Hausdorff. There are two main lines of research which focus on generalizing principal bundles to noncommutative mathematics:

\begin{enumerate}
\item[(I)]{\bf Compact principal bundles in noncommutative algebraic geometry}. In this setting, the role of a topological group and a topological space are played by a Hopf algebra and its unital comodule algebra respectively. The first definition of a~noncommutative principal bundle was given by Schneider~\cite{hj-s90} in terms of Hopf--Galois extensions~\cite{kt-81}. Brzezi\'nski and Majid~\cite{bm-93} used Schneider's definition and considered a~noncommutative gauge theory. The definition of a compact quantum principal bundle is now well established in the literature (see e.g.~\cite{alp-20,m-d96,pm-h96,hm-21,c-k04,lvs-05}). In fact, the notion that Schneider generalized was the one of a~principal $G$-bundle due to H.~Cartan~\cite{h-c49}, where the group action is free and proper but the local triviality (see e.g.~\cite{n-s51}) of the bundle is not assumed. The work of Aschieri, Fioresi, and Latini~\cite{afl-21} introduces a notion of local triviality which is suitable for noncommutative algebraic geomtery, but it cannot be used in the C*-algebraic context. Furthermore, the setting of Hopf--Galois extensions cannot be used to obtain a generalization of principal bundles beyond the compact case.

\item[(II)]{\bf Principal bundles in the theory of C*-algebras}. The interplay between principal bundles and C*-crossed products was studied by J.~Phillips and Raeburn in~\cite{pr-84} (see also \S~\ref{classlt}). The work of Connes and Rieffel~\cite{cr-87} was the first attempt to consider gauge theory, specifically the Yang--Mills theory, in the C*-algebraic framework~\cite{a-c94}.  Furthermore, in~\cite{ma-r90}, Rieffel defined free and proper actions of locally compact Hausdorff groups on C*-algebras generalizing the aforementioned notion of a principal bundle due to H.~Cartan. Some years later, Ellwood~\cite{da-e00} proposed a different generalization of a Cartan principal $G$-space in the context of actions of locally compact quantum groups on C*-algebras. 
In~\cite{dcy-13}, de Commer and Yamashita proved that, for compact (quantum) group actions on a unital C*-algebras, the definitions of Ellwood and Rieffel are equivalent. It is also worth mentioning that Echterhoff, Nest, and Oyono-Oyono introduced noncommutative principal torus bundles based on the Green theorem~\cite{enoo-09}. However, the notion of local triviality was still missing. In the case of a compact (quantum) group actions on a unital C*-algebras, a solution to the problem was proposed by the author, Gardella, Hajac, and Wu~\cite{ghtw-18}. Let us mention that the previous work of Budzy\'nski and Kondracki~\cite{bk-96}, while successful in many aspects, gave a generalization of a different concept, i.e. the piecewise triviality of the bundle (see e.g.~\cite{bhms-07}). This was observed by Hajac, Kr\"{a}hmer, Matthes, and Zieli\'nski in~\cite{hkmz-11} at the algebraic level.
\end{enumerate}

There are various interplays between the two approches. In the case of  compact quantum group actions on unital C*-algebras, Baum, de Commer, and Hajac proved in~\cite{bdch-17} that under additional assumptions the Hopf--Galois condition of Schneider is equivalent to the Ellwood condition. This equivalence is possible due to the existence of the Peter--Weyl functor.

In the paper, we introduce a definition of the {\em noncommutative numerable principal bundle} in the setting of actions of locally compact Hausdorff groups on C*-algebras. Unless otherwise stated, $G$ will denote a~locally compact Hausdorff group and $A$ will denote a C*-algebra. Recall that a principal bundle is called numerable if it admits a~trivializing cover with a partition of unity subordinated to it. The assumption of numerability is a crucial one. We have to rely on partitions of unity, since it is difficult to talk about open subsets of a noncommutative space. Recall, however, that only numerable principal bundles can be classified using the Milnor universal space~\cite{j-m56}. It is clear that already in topology it is important to go beyond the compact case. Indeed, although many principal $G$-bundles can be reduced to compact subgroups of $G$, there exist genuinely locally compact (i.e. non-reducible) ones. We use the terminology of~\cite[p.~332]{t-td08}, where total spaces of (numerable) principal $G$-bundles are called locally trivial $G$-spaces.

One way of obtaining a generalization of a principal bundle would be to use the Kasparov notion of a $C_0(Y)$-algebra~\cite{gg-k88}. We say that $A$ is a~{\em (numerable) principal $G$-$C_0(Y)$-algebra} if there is a~$G$-equivariant non-degenerate $*$-homomorphism $\chi:C_0(Y)\to ZM(A)$, where $Y$ is a locally compact Hausdorff $G$-space such that $Y\to Y/G$ is a (numerable) principal $G$-bundle (see \S~\ref{classlt}) and $ZM(A)$ denotes the center of the multiplier algebra of $A$. This definition covers all classical principal $G$-bundles with a locally compact Hausdorff total space and actions of dual groups on \mbox{C*-crossed} products coming from locally unitary actions considered in~\cite{pr-84}. However, this concept does not apply to simple \mbox{C*-al}\-ge\-bras. Since there are examples of simple C*-algebras giving rise to noncommutative principal bundles (see~\cite[Subsection~5.1]{cpt-21}), it seems that the notion of a principal $G$-$C_0(Y)$-algebra is more of a~classical flavour and a truly noncommutative object is needed.

The key difficulty in obtaining the right concept is that one has to go beyond the C*-algebra theory. This is prompted by several facts. First of all, the only known generalization of the local triviality condition for a principal $G$-bundle is the finiteness of the local-triviality dimension introduced in~\cite{ghtw-18}, which relies on the notion of a partition of unity. Therefore, we have to use unreduced cones or topological joins (see e.g.~\cite{j-m56}) one way or another. Let $\cC G$ denote the unreduced cone over $G$ (see \S~\ref{secone}). Observe that if $G$ is not compact then $\cC G$ is no longer locally compact. This means that $\cC G$ cannot be described using the Gelfand--Naimark duality. However, $\cC G$ is still a~compactly generated completely regular Hausdorff space. Hence, we can exploit the compact-open topology defined on the $*$-algebra $C(\cC G)$ of all continuous complex-valued functions on $\cC G$. In turn, this requires us to use the Pedersen multiplier algebra $\Gamma(K_A)$ (see \S~\ref{sec2.2}), which for $A=C_0(X)$, where $X$ is a locally compact Hausdorff space, is $*$-isomorphic to $C(X)$. Let us equip $C(\cC G)$ with the compact-open topology and $\Gamma(K_A)$ with the $\kappa$-topology (see~(\ref{kappatop})). We introduce the following definition (see Definition~\ref{fulllt}).

\vspace*{4mm}
{\bf The total space of a noncommutative numerable principal bundle}
\vspace*{2mm}

{\it We say that $A$ is a {\em locally trivial $G$-C*-algebra} if there exist unital $G$-equivariant continuous \mbox{$*$-ho}\-momorphisms $\rho_i:C(\cC G)\to\Gamma(K_A)$, $i\in\bN$ such that $(\sum_{i=0}^N\rho_i({\rm t}))_{N\in\bN}$ converges to $1$ in the \mbox{$\kappa$-topol}\-ogy.}
\vspace*{4mm}

If $G$ is compact and $A$ is unital, then the finiteness of the local triviality dimension immediately implies local triviality of the $G$-C*-algebra in question. We also have the following desired results. If $A=C_0(X)$ for some locally compact Hausdorff space $X$, then $A$ is a locally trivial $G$-C*-algebra if and only if $X\to X/G$ is a numerable principal $G$-bundle. Furthermore, numerable principal $C_0(Y)$-algebras are locally trivial as $G$-C*-algebras. Finally, if $A$ is a~locally trivial $G$-C*-algebra then the $G$-action on $A$ is {\em $\kappa$-free}. The $\kappa$-freeness is a generalization of freeness of the action of a group on a space (see \S~\ref{setthemfree}). If either $A$ is commutative or $G$ is compact and $A$ a unital, then $\kappa$-freeness is equivalent to the Ellwood freeness condition.

The paper is organized as follows. Section 2 contains some preliminaries on the Gelfand--Naimark duality and the multipliers of the Pedersen ideal, that will be needed in the sequel. We also describe the one-to-one correspondence between continuous maps of completely regular Hausdorff spaces and continuous $*$-homomorphisms of their unital $*$-algebras of all continuous complex-valued functions equipped with the compact-open topology. Results of a purely topological nature, including the reformulation of the notion of a numerable principal bundle, are in Section~3. Section 4 is the main part of the paper. We introduce three incarnations of local triviality of $G$-C*-algebras. We also consider classes of locally trivial $G$-C*-algebras coming from classical numerable principal bundles and principal numerable $G$-$C_0(Y)$-algebras. Finally, using the theory of graph C*-algebras, we provide examples of a non-unital locally trivial $\mathbb{Z}/2\mathbb{Z}$-C*-algebras that are not principal $C_0(X)$-algebras. 
Section 5 deals with freeness of actions. We introduce $\kappa$-freeness which is closely related to the Ellwood condition and prove that it is implied by local triviality.


\section{Preliminaries}\label{prelim}

We refer the reader to~\cite{gj-m90} for basic material on C*-algebras.

\subsection{The equivariant Gelfand--Naimark duality}\label{nctop}

Let $A$ be a C*-algebra and let $\widehat{A}$ be its {\em spectrum}, i.e. the set of unitary equivalence classes of irreducible $*$-representations of $A$. Let ${\rm Prim}(A)$ be the space of primitive ideals taken with the Jacobson topology. Recall that ${\rm Prim}(A)$ is always locally compact and that there is a surjective map $\widehat{A}\to {\rm Prim}(A)$, which enables us to turn $\widehat{A}$ into a locally compact space. If $\widehat{A}$ is a $T_0$-space, i.e. a space in which points are closed, then the aforementioned map is a homeomorphism. Furthermore, if $A$ is commutative, then $\widehat{A}$ is automatically Hausdorff.

An~{\em action} of a locally compact Hausdorff group $G$ on $A$ is given by a group homomorphism $\alpha:G\to {\rm Aut}(A)$. We say that $\alpha$ is {\em (jointly) continuous} if the map
$(a,g)\mapsto \alpha_g(a):=\alpha(g)(a)$ is continuous, where we consider the product topology on the domain.
A C*-algebra equipped with a continuous action of a~locally compact Hausdorff group $G$ is 
called a {\em $G$-C*-algebra}. Let $A$ and $B$ be $G$-C*-algebras with actions $\alpha$ and $\beta$ respectively. \mbox{A~$*$-homo}\-morphism $\Phi:A\to B$ is called {\em $G$-equivariant} or \mbox{a~{\em $G$-$*$-ho}\-{\em mo}}\-{\em mor}\-{\em phism} if $\Phi\circ\alpha_g=\beta_g\circ\Phi$ for all $g\in G$. A {\em morphism} between two C*-algebras $A$ and $B$ is a non-degenerate $*$-homomorphism $\phi:A\to M(B)$, i.e. $\phi(A)B$ is norm-dense in $B$. Here $M(B)$ is the C*-algebra of multipliers of $B$. Every action $\alpha$ of a locally compact Hausdorff group $G$ on a C*-algebra $A$ induces a continuous (right) action on $\widehat{A}$ by the formula
\begin{equation}\label{specact}
\widehat{A}\times G\longrightarrow\widehat{A}:\qquad ([\pi],g)\longmapsto [\pi\circ\alpha_g].
\end{equation}
The action $\alpha$ can also be extended to an action $M(\alpha)$ on $M(A)$ in the following way
\begin{equation}\label{multiaction}
M(\alpha)_g(x)a:=\alpha_g(xa),\qquad aM(\alpha)_g(x):=\alpha_g(ax),\qquad g\in G,\; x\in M(A),\; a\in A.
\end{equation}
The action $M(\alpha)$ is continuous with respect to the {\em strict topology} $\beta$, i.e. the topology generated by the seminorms
\begin{equation}\label{semistrict}
\phi_a(x):=\|xa\|,\quad \psi_a(x):=\|ax\|,\qquad x\in M(A),\quad a\in A.
\end{equation}
However, $M(\alpha)$ may not be continuous with respect to the norm topology. 
A {\em $G$-morphism} is a~$G$-equivariant morphism, where we consider the extension of the action to the multiplier algebra. In what follows, we will use the terms $G$-morphism and $G$-equivariant non-degenerate $*$-homomorphism interchangeably.

Finally, let $X$ be a locally compact Hausdorff space. Then the space $C_0(X)$ of all continuous complex-valued functions vanishing at infinity forms a commutative C*-algebra. If $G$ is a locally compact Hausdorff group acting on $X$ (on the right), then there is an induced continuous action on $C_0(X)$ given by
\begin{equation}\label{calgact}
(\alpha_g(f))(x):=f(xg),\qquad g\in G,\quad f\in C_0(X),\quad x\in X.
\end{equation}

C*-algebras are central objects studied in noncommutative topology which arose from the Gelfand--Naimark theorem~\cite{gn-43}. The equivariant version of the theorem can be stated as the following anti-equivalence of categories:
\begin{equation}\label{Ggelfnaim}
\left\{\begin{matrix}\text{compact Hausdorff $G$-spaces}\\
\text{and continuous $G$-maps}\end{matrix}\right\}
\quad{\longleftrightarrow}\quad
\left\{\begin{matrix}\text{unital commutative $G$-C*-algebras}\\
\text{and unital $G$-$*$-homomorphisms}\end{matrix}\right\}.
\end{equation}
Here by a compact Hausdorff $G$-space we mean that both the space and the group $G$ are compact Hausdorff. The above result can be extended to locally compact Hausdorff spaces in at least two different ways. Let us present the equivariant version of a generalization due to Woronowicz~\cite{sl-w80}:
\begin{equation}\label{GWor}
\left\{\begin{matrix}\text{locally compact Hausdorff $G$-spaces}\\
\text{and continuous $G$-maps}\end{matrix}\right\}
\quad{\longleftrightarrow}\quad
\left\{\begin{matrix}\text{commutative $G$-$C^*$-algebras}\\
\text{and $G$-morphisms}\end{matrix}\right\}.
\end{equation}
In the above anti-equivalences, the functor from left to right to each locally compact Hausdorff space $X$ assigns the commutative C*-algebra $C_0(X)$ and the inverse functor  to each commutative C*-algebra $A$ assigns its spectrum $\widehat{A}$. We omit the assignments of the morphisms for now, since they are analogous to~(\ref{funind}) and~(\ref{spind}) below.

\subsection{Beyond local compactness}\label{beyondGN}
In this paper, we have to go beyond the anti-equivalences~(\ref{Ggelfnaim}) and~(\ref{GWor}). Let $Y$ be a completely regular Hausdorff space and let $C(Y)$ denote the unital $*$-algebra of all continuous complex-valued functions on $Y$.  We equip $C(Y)$ with the compact-open topology $\kappa$, i.e. the topology generated by the seminorms
\begin{equation}\label{cosemi}
p_K(f):=\sup_{y\in K}|f(y)|,
\end{equation}
where $K\subset Y$ is compact and $f\in C(Y)$. This topology turns $C(Y)$ into a locally convex $*$-algebra. Let ${\rm Aut}_\kappa(C(Y))$ denote the group of $\kappa$-continuous  \mbox{$*$-auto}\-morphisms of $C(Y)$. Much in the same way as in~(\ref{calgact}), if $G$ is a locally compact Hausdorff group and $Y$ is a (right) $G$-space, we define a~$G$-action $\alpha:G\to{\rm Aut}_\kappa(C(Y))$ by the formula 
\begin{equation}\label{kappact}
(\alpha_g(f))(y):=f(yg),\qquad g\in G,\quad f\in C(Y), \quad y\in Y.
\end{equation}
One can show that this action is continuous, i.e. that the natural map $G\times C(Y)\to C(Y)$ it induces is continuous, where we consider the product topology on the domain. Let us define the set of {\em characters} 
\[
\Omega(C(Y)):=\{\chi:C(Y)\to\mathbb{C}~:~\chi\text{ is a non-zero $\kappa$-continuous $*$-homomorphism}\},
\]
which we can equip with the weak $*$-topology. The above \mbox{$G$-action} $\alpha:G\to{\rm Aut}_\kappa(C(Y))$ given by (\ref{kappact}) induces a $G$-action on $\Omega(C(Y))$ as follows
\begin{equation}\label{charact}
G\longrightarrow {\rm Homeo}(\Omega(C(Y))):\quad \chi\longmapsto \chi\circ\alpha_g\,.
\end{equation}
One can prove that the map
\begin{equation}\label{comregchar}
Y\longrightarrow\Omega(C(Y)),\qquad y\longmapsto m_y\,,\quad m_y(f):=f(y),
\end{equation}
is a continuous bijection, where complete regularity of $Y$ is only needed to prove injectivity~(see e.g.~\cite[Corollary~2.3]{mw-67}). Since the topology on a completely regular Hausdorff space $Y$ is the initial topology induced by $C(Y)$, we conclude that~(\ref{comregchar}) is a homeomorphism. Finally, since
\[
m_{yg}(f)=f(yg)=(\alpha_g(f))(y)=m_y(\alpha_g(f)),\qquad g\in G,\quad y\in Y,\quad f\in C(Y),
\]
we infer that~(\ref{comregchar}) is $G$-equivariant (this also proves that~(\ref{charact}) defines a continuous $G$-action).

If $F:Y\to Z$ is a continuous $G$-map between completely regular Hausdorff $G$-spaces, then we can define a unital $G$-equivariant $\kappa$-continuous $*$-homomorphism
\begin{equation}\label{funind}
F^*:C(Z)\longrightarrow C(Y):\qquad f\longmapsto f\circ F.
\end{equation}
Next, if we start with a unital $G$-equivariant $\kappa$-continuous $*$-homomorphism $F^*:C(Z)\to C(Y)$, then we can obtain a continuous $G$-map
\begin{equation}\label{spind}
F:\Omega(C(Y))\longrightarrow \Omega(C(Z)):\qquad \chi\longmapsto \chi\circ\Phi.
\end{equation}
Hence, using the $G$-homeomorphism~(\ref{comregchar}), we obtain a one-to-one correspondence between
\begin{itemize}
\item continuous $G$-maps $Y\to Z$ between completely regular Hausdorff $G$-spaces
\item and unital $G$-equivariant $\kappa$-continuous $*$-homomorphisms $C(Z)\to C(Y)$ between their locally convex $*$-algebras of all continuous functions.
\end{itemize}


\subsection{Multipliers of the Pedersen ideal}\label{sec2.2}

Let us recall the definition of the Pedersen ideal of a C*-algebra $A$~\cite{gk-p69}. We first introduce the subsets
\[
K_0^+:=\{a\in A_+~:~ab=a\text{ for some $b\in A_+$}\},
\]
\[
K^+:=\left\{a\in A_+~:~\text{there exists $a_0\,,\ldots, a_n\in K_0^+$ such that }a\leq \sum_{i=0}^na_n\right\}.
\]
The {\em Pedersen ideal} is defined to be $K_A:={\rm span}\,K^+$. It is the minimal dense ideal of $A$. 
A~{\em multipier} of $K_A$ is a pair of linear maps $(S,T)$ from $K_A$ to $K_A$ such that 
\[
xS(y)=T(x)y\qquad \text{for all $x,y\in K_A$}\,.
\] 
The set of all multipiers of $K_A$, denoted by $\Gamma(K_A)$, is a unital $*$-algebra with natural operations induced from $K_A$. Throughout the paper we call $\Gamma(K_A)$ the {\em Pedersen multiplier algebra} (or the algebra of multipliers of the Pedersen ideal) of $A$. The subset of bounded multipliers, i.e. when $S$ and $T$ are bounded maps, equals to the multiplier algebra $M(A)$. If $A=C_0(X)$ for some locally compact Hausdorff space $X$, then $K_A\cong C_c(X)$, the $*$-algebra of continuous compactly supported complex-valued functions on $X$, and $\Gamma(K_A)\cong C(X)$, the $*$-algebra of all continuous complex-valued functions on $X$. Observe that $\Gamma(K_A)$ is commutative if and only if $A$ is commutative. 

For the purposes of the sequel, let us discuss the $\kappa$-topology on $\Gamma(K_A)$~\cite[\S~3.6]{lt-76},
i.e. the locally convex topology given by the seminorms
\begin{equation}\label{kappatop}
\widetilde{\phi}_k((S,T)):=\|S(k)\|,\qquad \widetilde{\psi}_k((S,T)):=\|T(k)\|,\qquad k\in K_A\,.
\end{equation}
In this topology, $\Gamma(K_A)$ is a locally convex complete topological $*$-algebra~\cite[Theorem~3.8]{lt-76}. If $A=C_0(X)$ then the $\kappa$-topology on $\Gamma(K_A)\cong C(X)$ agrees with the compact-open topology~\cite[\S~4.1]{lt-76}.
The restriction of the $\kappa$-topology to $M(A)$ can be compared with the norm and the strict topology $\beta$ on it. In general, all these topologies are different, e.g. the norm and the strict topologies are complete, while $M(A)$ is only $\kappa$-dense in $\Gamma(K_A)$. 
However,~\cite[Theorem~1(iii)]{rc-b58} shows that in the commutative case the two topologies coincide on norm-bounded subsets of $M(A)$. Closely following the proof of this result, one obtains the following generalization:

\begin{lemma}\label{diftop}
Let $A$ be a C*-algebra. 
Then the strict topology $\beta$ coincides with the $\kappa$-topology on norm-bounded subsets of $M(A)$.\hfill$\blacksquare$
\end{lemma}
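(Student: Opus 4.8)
The plan is to adapt the argument of Busby--style comparison results (as referenced, \cite[Theorem~1(iii)]{rc-b58}), replacing the pointwise evaluation arguments valid in the commutative case by manipulations with the two families of seminorms defining the $\kappa$-topology versus the strict topology. The first observation is that on $M(A)$ the $\kappa$-topology is \emph{a priori} weaker than the strict topology: for $k\in K_A\subset A$, the $\kappa$-seminorms $\widetilde\phi_k((S,T))=\|S(k)\|=\|xk\|$ (writing $x$ for the multiplier with left action $S$) are exactly the strict seminorms $\phi_k$ restricted to the dense ideal $K_A$, so every $\kappa$-seminorm is a strict seminorm. Hence the identity map $(M(A),\beta)\to(M(A),\kappa)$ is continuous without any boundedness hypothesis, and the whole content of the lemma is the reverse inequality \emph{on norm-bounded sets}.

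For the reverse direction, fix a norm-bounded net $(x_\lambda)$ in $M(A)$, say $\|x_\lambda\|\le C$, converging to $0$ in the $\kappa$-topology; I must show $\|x_\lambda a\|\to 0$ and $\|a x_\lambda\|\to 0$ for every $a\in A$. The key point is density of $K_A$ in $A$: given $a\in A$ and $\varepsilon>0$, pick $k\in K_A$ with $\|a-k\|<\varepsilon$. Then $\|x_\lambda a\|\le\|x_\lambda(a-k)\|+\|x_\lambda k\|\le C\varepsilon+\|x_\lambda k\|$, and $\|x_\lambda k\|=\widetilde\phi_k((S_\lambda,T_\lambda))\to 0$ by $\kappa$-convergence. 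Since $\varepsilon$ was arbitrary, $\|x_\lambda a\|\to 0$; the right-module estimate is symmetric, using $\widetilde\psi_k$ and the right action $T$. This is a completely standard ``$3\varepsilon$ with a dense subspace and a uniform bound'' argument, and it shows the two topologies induce the same convergent nets on bounded sets, hence the same subspace topology there.

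The one genuine subtlety — and the place where I would be most careful — is the bookkeeping identifying the multiplier pair $(S,T)$ restricted to $K_A$ with the honest left/right multiplications by $x\in M(A)\subset\Gamma(K_A)$: one needs that for a \emph{bounded} multiplier, $S(k)=xk$ and $T(k)=kx$ for all $k\in K_A$, so that the $\kappa$-seminorms on $M(A)$ literally are $k\mapsto\|xk\|$ and $k\mapsto\|kx\|$. This is part of the identification of bounded Pedersen multipliers with $M(A)$ recalled just before the lemma, but it must be invoked explicitly. A second small point is that one should phrase the conclusion in terms of seminorm domination on bounded sets rather than merely nets, which is immediate: on a norm-bounded set $B$, each strict seminorm $\phi_a$ is, for the chosen $k$ approximating $a$, bounded by $\widetilde\phi_k + C\varepsilon$, and since $\varepsilon$ is arbitrary the strict topology on $B$ is coarser than (hence equal to, by the first paragraph) the $\kappa$-topology on $B$. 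I do not expect any real obstacle beyond faithfully transcribing the commutative proof with these substitutions.
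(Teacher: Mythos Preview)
Your proposal is correct and follows exactly the approach the paper indicates: the paper does not write out a proof but states that the result follows by ``closely following the proof'' of \cite[Theorem~1(iii)]{rc-b58}, and your $3\varepsilon$ argument using density of $K_A$ in $A$ together with the uniform norm bound is precisely that adaptation. The two points you flag --- the identification of bounded Pedersen multipliers with ordinary multipliers so that the $\kappa$-seminorms on $M(A)$ read as $\|xk\|$, $\|kx\|$, and the easy direction that $\kappa$ is coarser than $\beta$ --- are exactly the bookkeeping needed; there is nothing further in the paper's treatment.
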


Let $A$ be a C*-algebra equipped with a continuous action $\alpha:G\to {\rm Aut}(A)$ of a locally compact Hausdorff group $G$. Since $\alpha_g(K_A)=K_A$ for every $g\in G$~(see e.g.~\cite[Theorem~1.4]{gk-p69}), we can define an action $\Gamma(\alpha)$ of $G$ on $\Gamma(K_A)$ by $\kappa$-continuous $*$-automorphisms as follows (see e.g.~\cite[\S~2.13]{lt-76})
\begin{equation}\label{gammact}
 \Gamma(\alpha)_g(S)(\alpha_g(k)):=\alpha_g(S(k)),\qquad\Gamma(\alpha)_g(T)(\alpha_g(k)):=\alpha_g(T(k)), 
\end{equation}
where $g\in G$, $(S,T)\in\Gamma(K_A)$, and $k\in K_A$. For $G$-C*-algebras $A$ and $B$ with actions $\alpha$ and $\beta$ respectively, when we say that a unital \mbox{$\kappa$-con}\-tin\-u\-ous $*$-homomorphism $\Gamma(K_A)\to\Gamma(K_B)$ is $G$-equivariant, we always mean the equivariance with respect to the extended actions $\Gamma(\alpha)$ and $\Gamma(\beta)$. Note that if $A$ is a~$G$-C*-algebra with an action $\alpha$, then the inclusion $M(A)\hookrightarrow\Gamma(K_A)$ is $G$-equivariant with respect to the extended actions $M(\alpha)$ and $\Gamma(\alpha)$ (see~(\ref{multiaction}) and~(\ref{gammact}) respectively).  In the commutative case, the $*$-isomorphism $C(X)\cong\Gamma(K_{C_0(X)})$ of topological $*$-algebras is $G$-equivariant if we consider the $G$-actions given by~(\ref{kappact}) and~(\ref{gammact}) respectively. 

Let us now turn to the relation between $G$-morphisms of C*-algebras and unital $\kappa$-continuous $G$-$*$-homomorphisms of their Pedersen multiplier algebras.
\begin{proposition}\label{onewayfunct}
Let $A$ and $B$ be $G$-C*-algebras, where $G$ is a locally compact Hausdorff group, and let $\Phi:\Gamma(K_A)\to\Gamma(K_B)$ be a unital \mbox{$G$-equi}\-variant $\kappa$-continuous $*$-homo\-morphism. Then the restriction of $\Phi$ to $A$ is a \mbox{$G$-mor}\-phism.
\end{proposition}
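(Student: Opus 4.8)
The plan is to check, one at a time, the four facts that together say $\Phi|_A$ is a $G$-morphism: that $\Phi|_A$ (i)~takes values in $M(B)\subseteq\Gamma(K_B)$, (ii)~is continuous as a $*$-homomorphism $A\to M(B)$, (iii)~is non-degenerate, and (iv)~intertwines $\alpha$ with the extended action $M(\beta)$.

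For (i) I would first reduce to positive elements: since $A\subseteq\widetilde A\subseteq M(A)\subseteq\Gamma(K_A)$ and every element of the unitization $\widetilde A$ is a linear combination of elements $y\in\widetilde A_+$ with $\|y\|\le1$, while $M(B)$ is a linear subspace of $\Gamma(K_B)$, it suffices to prove $\Phi(y)\in M(B)$ for such $y$. For a fixed such $y$, both $y$ and $1-y$ are positive in the C*-algebra $\widetilde A$, hence admit self-adjoint square roots there; applying the unital $*$-homomorphism $\Phi$ produces self-adjoint elements $z:=\Phi(y^{1/2})$ and $w:=\Phi((1-y)^{1/2})$ of $\Gamma(K_B)$ with $\Phi(y)=z^{*}z$ and $1-\Phi(y)=w^{*}w$. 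The heart of the argument is then a computation inside the C*-algebra $B$: writing $\cdot$ for the left module action of $\Gamma(K_B)$ on $K_B$ and using the defining relation $x_1 S(x_2)=T(x_1)x_2$ of a Pedersen multiplier together with self-adjointness of $z$ and $w$, one verifies for every $k\in K_B$ the identities $(z\cdot k)^{*}(z\cdot k)=k^{*}(\Phi(y)\cdot k)$ and $(w\cdot k)^{*}(w\cdot k)=k^{*}k-k^{*}(\Phi(y)\cdot k)$. These force $0\le k^{*}(\Phi(y)\cdot k)\le k^{*}k$ in $B$, so $\|z\cdot k\|^{2}=\|k^{*}(\Phi(y)\cdot k)\|\le\|k\|^{2}$; the symmetric identity involving the right module action bounds the other component of $z$ in the same way. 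Thus $z=\Phi(y^{1/2})$ is a bounded multiplier of $K_B$, i.e. $z\in M(B)$ with $\|z\|\le1$, whence $\Phi(y)=z^{*}z\in M(B)$; undoing the reduction gives $\Phi(A)\subseteq M(B)$. I expect this positivity step to be the only real obstacle: one must keep careful track of which of the pair $(S,T)$ implements the left and the right action of $\Gamma(K_B)$ on $K_B$, and of how the $*$-operation interchanges them, in order to land the inequalities in the genuine order structure of $B$.

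Item (ii) is then automatic, since a $*$-homomorphism $A\to M(B)$ between C*-algebras is contractive, in particular norm-continuous. For (iii), I would pick a contractive approximate unit $(e_\lambda)$ of $A$. Viewing each $e_\lambda$ as a multiplier of $K_A$, one has $\|(e_\lambda-1)\cdot k\|=\|e_\lambda k-k\|\to0$ and $\|k\cdot(e_\lambda-1)\|=\|ke_\lambda-k\|\to0$ for all $k\in K_A$, so $e_\lambda\to1$ in the $\kappa$-topology of $\Gamma(K_A)$. By $\kappa$-continuity and unitality of $\Phi$, $\Phi(e_\lambda)\to\Phi(1)=1$ in the $\kappa$-topology of $\Gamma(K_B)$. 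Since $\{\Phi(e_\lambda)\}_\lambda\cup\{1\}$ is a norm-bounded subset of $M(B)$ by (i) and (ii), Lemma~\ref{diftop} shows the $\kappa$- and strict topologies agree on it, so $\Phi(e_\lambda)\to1$ strictly, i.e. $\Phi(e_\lambda)b\to b$ in norm for every $b\in B$. As $\Phi(e_\lambda)b\in\Phi(A)B$, the set $\Phi(A)B$ is norm-dense in $B$, which is non-degeneracy of $\Phi|_A\colon A\to M(B)$.

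Finally, (iv) is immediate from the conventions in~\S~\ref{sec2.2}: $\Phi$ is $G$-equivariant with respect to $\Gamma(\alpha)$ and $\Gamma(\beta)$, and the inclusions $A\hookrightarrow\Gamma(K_A)$ and $M(B)\hookrightarrow\Gamma(K_B)$ are $G$-equivariant with respect to $(\alpha,\Gamma(\alpha))$ and $(M(\beta),\Gamma(\beta))$, so the corestriction $\Phi|_A\colon A\to M(B)$ intertwines $\alpha$ and $M(\beta)$. Combining (i)--(iv), $\Phi|_A$ is a $G$-equivariant non-degenerate $*$-homomorphism into $M(B)$, i.e. a $G$-morphism.
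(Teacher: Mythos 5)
Your proof is correct, and its core --- the non-degeneracy argument --- is exactly the paper's: take a bounded approximate unit $(e_\lambda)$ of $A$, note that $e_\lambda\to1$ in the $\kappa$-topology of $\Gamma(K_A)$, push this through the $\kappa$-continuous unital map $\Phi$ to get $\Phi(e_\lambda)\to1$ in the $\kappa$-topology, and invoke Lemma~\ref{diftop} on the norm-bounded set $\{\Phi(e_\lambda)\}\cup\{1\}\subseteq M(B)$ to upgrade this to strict convergence. Where you go beyond the paper is item (i): the paper simply asserts $\Phi(A)\subseteq M(B)$ without justification, whereas you give a genuine proof, reducing to positive contractions $y$ in the unitization, writing $\Phi(y)=z^*z$ and $1-\Phi(y)=w^*w$ with $z=\Phi(y^{1/2})$ and $w=\Phi((1-y)^{1/2})$ self-adjoint, and using the multiplier relation to derive $0\le k^*(\Phi(y)\cdot k)\le k^*k$ in $B$, hence $\|z\cdot k\|\le\|k\|$ (and symmetrically for the right component), so that $z$, and therefore $\Phi(y)$, is a bounded multiplier. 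This is a worthwhile addition: boundedness of $\Phi(y)$ is not a formal consequence of $\Phi$ being a unital $*$-homomorphism of $*$-algebras, and your device of pushing the order estimates down into $B$ via the module action on $K_B$, where genuine C*-positivity is available, is the right way to see it. Points (ii) and (iv) are routine, as you say, and match the paper's implicit treatment.
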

\begin{proof}
Let $\phi:=\Phi|_A$. Then $\phi$ is a $G$-equivariant $*$-homomorphism and $\phi(A)\subseteq M(B)$.
We only need to prove that it is non-degenerate, which is equivalent to showing that $\phi(e_\lambda)$ strictly converges to $1$ for some approximate unit $(e_\lambda)$ of $A$.
Observe that if $(e_\lambda)$ is an approximate unit of $A$, then $e_\lambda\to 1$ in the $\kappa$-topology in $\Gamma(K_A)$. Without loss of generality we may assume that $(e_\lambda)$ is bounded. We infer that
\[
\beta\lim_\lambda\phi(e_\lambda)=\kappa\lim_\lambda\phi(e_\lambda)=\kappa\lim_\lambda\Phi(e_\lambda)=\Phi(\kappa\lim_\lambda e_\lambda)=\Phi(1)=1,
\]
where in the first equality we used Lemma~\ref{diftop} (note that $\phi$ is bounded as a $*$-homomorphism between C*-algebras).
\end{proof}

It is not clear whether all $G$-morphisms can be extended to unital $G$-equivariant \mbox{$\kappa$-con}\-tin\-u\-ous \mbox{$*$-homo}\-morphisms. However, there is a class of morphisms for which this extension exist. A~morphism between C*-algebras $\phi:A\to M(B)$ is {\em strictly non-degenerate} if $K_B\subseteq \phi(K_A)B$~(see \cite[Definition~3.1]{m-a04}). 
First, let us identify a class of morphisms which are automatically strictly non-degenerate. In what follows, $ZA$ denotes the center of a C*-algebra $A$.
\begin{proposition}\label{pedcom}
Let $\phi:A\to M(B)$ be a morphism. If $\phi(A)\subseteq ZM(B)$, then $\phi$ is strictly non-degenerate.
\end{proposition}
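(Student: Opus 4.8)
The plan is to show directly that $K_B\subseteq\phi(K_A)B$, which is the definition of strict non-degeneracy. Since $\phi$ is a morphism, $\phi(A)B$ is norm-dense in $B$, and the assumption $\phi(A)\subseteq ZM(B)$ means every element $\phi(a)$ is a central multiplier of $B$. The key observation is that $K_B$ is the \emph{minimal} dense ideal of $B$, so it suffices to exhibit \emph{some} dense ideal of $B$ contained in $\phi(K_A)B$; minimality of $K_B$ then forces $K_B\subseteq\phi(K_A)B$. So the real task is to check that $\phi(K_A)B$ (or rather its span, or the ideal it generates) is a dense ideal of $B$.

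First I would verify that $I:=\overline{\operatorname{span}}\,\phi(K_A)B$ is a (closed, two-sided) ideal of $B$: it is a left ideal trivially, and it is a right ideal because $\phi(K_A)\subseteq ZM(B)$ commutes with $B$, so $\phi(k)b\cdot b' = \phi(k)(bb')\in\phi(K_A)B$. Density of $I$ in $B$ follows from density of $K_A$ in $A$ together with non-degeneracy of $\phi$: indeed $\phi(A)B$ is dense in $B$, and $\phi(K_A)B$ is dense in $\phi(A)B$ because $K_A$ is dense in $A$ and $\phi$ is norm-decreasing (a $*$-homomorphism into $M(B)$). Hence $I$ is a dense closed ideal of $B$. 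Now the subtle point is to pass from the \emph{closed} ideal $I$ down to an honest algebraic ideal sitting inside $\phi(K_A)B$ that one can compare with $K_B$; the cleanest route is to use that the Pedersen ideal $K_B$ is contained in \emph{every} dense ideal of $B$ (this is exactly its minimality among dense ideals), and that $\operatorname{span}\phi(K_A)B$ is itself a dense \emph{algebraic} ideal of $B$ by the centrality argument above — so $K_B\subseteq\operatorname{span}\phi(K_A)B=\phi(K_A)B$ after absorbing scalars, giving strict non-degeneracy.

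I expect the main obstacle to be the bookkeeping around ``dense ideal'': one must be careful that $\operatorname{span}\phi(K_A)B$ is genuinely a two-sided algebraic ideal (not merely its closure), which is precisely where centrality of $\phi(A)$ is essential — without it $\phi(K_A)B$ need not be closed under right multiplication by $B$ at all. A secondary point worth a line is that $K_B$ being the minimal dense ideal means $K_B$ is contained in any dense ideal; one should cite this from Pedersen's original characterisation (as in~\cite{gk-p69}). Once those two facts are pinned down, the inclusion $K_B\subseteq\phi(K_A)B$ is immediate and the proof is complete.
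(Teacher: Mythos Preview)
Your proposal is correct and follows essentially the same route as the paper: exhibit a dense two-sided algebraic ideal of $B$ built from $\phi$ and invoke minimality of the Pedersen ideal. The paper forms $B\phi(A)B$ (automatically two-sided, no centrality needed yet), obtains $K_B\subseteq B\phi(A)B$, and only then uses centrality together with $B^2=B$ to rewrite this as $\phi(A)B$; you instead work directly with $\operatorname{span}\phi(K_A)B$ and use centrality up front to verify it is two-sided. Both arguments are the same idea, just with the appeal to centrality placed at different moments.

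One small slip worth fixing: you have the sides reversed. The set $\phi(K_A)B$ is a \emph{right} ideal trivially --- indeed your displayed computation $\phi(k)b\cdot b'=\phi(k)(bb')$ does not use centrality at all --- whereas it is a \emph{left} ideal precisely because $\phi(K_A)\subseteq ZM(B)$, via $b'\cdot\phi(k)b=\phi(k)(b'b)$. With that correction the argument is complete.
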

\begin{proof}
First, note that $B\phi(A)B$ is a two-sided ideal of $B$ which is dense by non-degeneracy of $\phi$. By minimality of the Pedersen ideal, we obtain that $K_B\subseteq B\phi(A)B$. Since $\phi(A)\subseteq ZM(B)$ and $B^2=B$, we conclude that $\phi$ is strictly non-degenerate.
\end{proof}
Next, we show that strictly non-degenerate $G$-morphisms extend to unital $G$-equivariant $\kappa$-continuous $*$-homo\-morphisms.
\begin{proposition}\label{stricnondeg}
Let $\phi:A\to M(B)$ be a strictly non-degenerate $G$-morphism. Then $\phi$ extends uniquely to a unital $G$-equivariant $\kappa$-continuous $*$-homomorphism $\Phi:\Gamma(K_A)\to\Gamma(K_B)$.
\end{proposition}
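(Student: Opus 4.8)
The plan is to build $\Phi$ by hand from a factorization of $K_B$ that strict non-degeneracy forces, to verify directly that the resulting pair of maps is a $\kappa$-continuous $G$-equivariant multiplier of $K_B$, and to read off uniqueness from $\kappa$-density of $A$ in $\Gamma(K_A)$. (Alternatively, one could invoke the non-equivariant version of the extension from \cite{m-a04} and then deduce $G$-equivariance from the uniqueness clause, since $\Gamma(\beta)_g\circ\Phi\circ\Gamma(\alpha)_{g^{-1}}$ is again an extension of $\beta_g\circ\phi\circ\alpha_{g^{-1}}=\phi$; but carrying out the construction makes the role of strict non-degeneracy transparent.)

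First I would prove the factorization lemma $K_B=\operatorname{span}\bigl(\phi(K_A)K_B\bigr)=\operatorname{span}\bigl(K_B\phi(K_A)\bigr)$. In the first identity, $\supseteq$ holds because $\phi(K_A)\subseteq M(B)\subseteq\Gamma(K_B)$ leaves the ideal $K_B$ invariant; for $\subseteq$, given $k\in K_B$ choose a local unit $e\in K_B$ with $ek=k$ (the Pedersen ideal is a $*$-algebra with local units; see e.g.~\cite{gk-p69}), use strict non-degeneracy to write $e=\sum_i\phi(a_i)b_i$ with $a_i\in K_A$ and $b_i\in B$, and observe that $k=ek=\sum_i\phi(a_i)(b_ik)$ with $b_ik\in BK_B\subseteq K_B$. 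The second identity follows by applying the first to adjoints, since $K_A,K_B$ are self-adjoint and $\phi$ is a $*$-homomorphism. Given $(S,T)\in\Gamma(K_A)$ I then set $S'(k):=\sum_i\phi(S(a_i))k_i$ for any finite decomposition $k=\sum_i\phi(a_i)k_i$ with $a_i\in K_A$, $k_i\in K_B$, and symmetrically $T'(k):=\sum_i k_i\phi(T(a_i))$ from a right decomposition; both values lie in $K_B$. Well-definedness is the first genuine point: if $\sum_i\phi(a_i)k_i=0$ then for every $c\in K_A$, using the multiplier identity $cS(a_i)=T(c)a_i$ in $K_A$, one gets $\phi(c)\bigl(\sum_i\phi(S(a_i))k_i\bigr)=\sum_i\phi(cS(a_i))k_i=\sum_i\phi(T(c)a_i)k_i=\phi(T(c))\bigl(\sum_i\phi(a_i)k_i\bigr)=0$; since $K_A$ is norm-dense in $A$ and $\phi$ is a non-degenerate $*$-homomorphism, $\phi(K_A)$ acts non-degenerately on $B$, so $\sum_i\phi(S(a_i))k_i=0$. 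The same computation, expanding $k_1$ by a right and $k_2$ by a left decomposition, yields $k_1S'(k_2)=T'(k_1)k_2$ for all $k_1,k_2\in K_B$, so $\Phi((S,T)):=(S',T')\in\Gamma(K_B)$.

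Next I would check the structural properties. That $\Phi$ is linear, multiplicative, $*$-preserving and unital is immediate from the formulas (for $a\in A$, the multiplier of $K_A$ given by left/right multiplication by $a$ is sent to left/right multiplication by $\phi(a)$, which is the canonical image of $\phi(a)\in M(B)$ in $\Gamma(K_B)$; in particular $\Phi$ restricts to $\phi$ on $A\hookrightarrow M(A)\hookrightarrow\Gamma(K_A)$). $G$-equivariance follows by substituting the definitions (\ref{gammact}) of $\Gamma(\alpha)_g$ and $\Gamma(\beta)_g$ and using $\phi\circ\alpha_g=\beta_g\circ\phi$: factoring $l\in K_B$ as $\sum_i\phi(\alpha_g(a_i))k_i$, both $\Phi\circ\Gamma(\alpha)_g$ and $\Gamma(\beta)_g\circ\Phi$ send $(S,T)$ to the multiplier whose left action on $l$ is $\sum_i\beta_g(\phi(S(a_i)))k_i$. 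For $\kappa$-continuity, fix $k\in K_B$ and one finite decomposition $k=\sum_{i=1}^n\phi(a_i)k_i$; then, $\phi$ being contractive, $\|S'(k)\|\le\sum_{i=1}^n\|k_i\|\,\|S(a_i)\|=\sum_{i=1}^n\|k_i\|\,\widetilde{\phi}_{a_i}((S,T))$, so the generating seminorm $(S',T')\mapsto\|S'(k)\|$ of $\Gamma(K_B)$ pulls back along $\Phi$ to something dominated by finitely many generating seminorms of $\Gamma(K_A)$; symmetrically for $T'$. Hence $\Phi$ is $\kappa$-continuous.

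Finally, for uniqueness, $K_A$ and hence $A$ is $\kappa$-dense in $\Gamma(K_A)$ (see~\cite{lt-76}) and $\Gamma(K_B)$ is Hausdorff, so a $\kappa$-continuous $*$-homomorphism $\Gamma(K_A)\to\Gamma(K_B)$ is determined by its restriction to $A$, which is prescribed to equal $\phi$. I expect the main obstacle to be the factorization lemma together with the bookkeeping that keeps every intermediate element inside $K_B$ rather than merely inside $B$: this is precisely where strict non-degeneracy is needed and where ordinary non-degeneracy would fail. Once $K_B=\operatorname{span}\bigl(\phi(K_A)K_B\bigr)$ is available, the remaining verifications are short manipulations of the multiplier identity.
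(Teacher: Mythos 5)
Your proposal is correct, and its skeleton is the same as the paper's (both go back to the construction in \cite{m-a04}): define $\Phi(S)$ and $\Phi(T)$ by pushing a factorization of $K_B$ through $\phi$, then verify the multiplier identity, the $*$-homomorphism properties, $G$-equivariance, and $\kappa$-continuity. The differences lie in the technical choices, and yours are arguably tidier. The paper factors each $k_B\in K_B$ as a single product $\phi(k_A)b=b'\phi(k'_A)$ with $b,b'\in B$, sets $\Phi(S)(\phi(k)b):=\phi(S(k))b$, and leaves well-definedness and the multiplier identity as ``plain to show''; you instead establish $K_B=\operatorname{span}\bigl(\phi(K_A)K_B\bigr)$ via local units of the Pedersen ideal, which keeps every intermediate element inside $K_B$, turns well-definedness into an explicit (and correct) computation with $cS(a)=T(c)a$ plus non-degeneracy of $\phi(K_A)$ on $B$, and avoids any ambiguity about whether $\phi(K_A)B$ denotes products or their span. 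For $\kappa$-continuity the paper runs a net argument through the factorization, whereas you dominate each generating seminorm of $\Gamma(K_B)$ pulled back along $\Phi$ by finitely many generating seminorms of $\Gamma(K_A)$ --- a more quantitative statement that immediately gives continuity. For uniqueness the paper appeals to the uniqueness of the extension $M(A)\to M(B)$, while you use $\kappa$-density of $A$ (indeed of $K_A$) in $\Gamma(K_A)$ together with Hausdorffness of the $\kappa$-topology; the latter is the argument that matches the uniqueness claim as stated, namely uniqueness among $\kappa$-continuous extensions. The equivariance check is the same direct computation in both, and your observation that it could alternatively be deduced from uniqueness applied to $\Gamma(\beta)_g\circ\Phi\circ\Gamma(\alpha)_{g^{-1}}$ is also valid.
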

\begin{proof}
We follow~\cite[p.~21]{m-a04} and define the map $\Phi:\Gamma(K_A)\to\Gamma(K_B)$ by
\[
\Phi(S)(\phi(k)b):=\phi(S(k))b,\qquad \Phi(T)(b\phi(k)):=b\phi(T(k)),\qquad k\in K_A,\quad b\in B.
\]
Here we use that $K_B\subseteq\phi(K_A)B=B\phi(K_A)$. It is plain to show that $(\Phi(S),\Phi(T))\in\Gamma(K_B)$.
Then it is equally straightforward to show that $\Phi$ is a unital $*$-homomorphism. Its uniqueness follows for example from the uniqueness of the extension to the multiplier algebras $M(A)\to M(B)$. We need to show the $\kappa$-continuity of $\Phi$. Let $(S_\lambda\,,T_\lambda)\to (S,T)$ in the $\kappa$-topology and let
\[
(U_\lambda\,,V_\lambda):=\Phi((S_\lambda\,,T_\lambda)),\qquad (U,V):=\Phi((S,T)).
\]
Since for every $k_B\in K_B$\, there exist $k_A, k'_A\in K_A$ and $b,b'\in B$ such that $k_B=\phi(k_A)b=b'\phi(k'_A)$, we obtain that
\[
\lim_\lambda U_\lambda(k_B)=\lim_\lambda U_\lambda(\phi(k_A)b)=\lim_\lambda \phi(S_\lambda(k_A))b= \phi(S(k_A))b=U(\phi(k_A)b)=U(k_B),
\]
\[
\lim_\lambda V_\lambda(k_B)=\lim_\lambda V_\lambda(b'\phi(k'_A))=\lim_\lambda b'\phi(T_\lambda(k'_A))= b'\phi(T(k'_A))=V(b'\phi(k'_A))=V(k_B).
\]
Assume that the $G$-actions on $A$ and $B$ are denoted by $\alpha$ and $\beta$ respectively. For all $g\in G$, $(S,T)\in\Gamma(K_A)$, $k\in K_A$, and $b\in B$, we have that
\begin{align*}
\Gamma(\beta)_g(\Phi(S))(\phi(\alpha_g(k))\beta_g(b))&=\Gamma(\beta)_g(\Phi(S))(\beta_g(\phi(k)b))=\beta_g(\Phi(S)(\phi(k)b))
\\&=\beta_g(\phi(S(k))b)=\phi(\alpha_g(S(k)))\beta_g(b)
\\&=\phi(\Gamma(\alpha)_g(S)(\alpha_g(k)))\beta_g(b)=\Phi(\Gamma(\alpha)_g(S))(\phi(\alpha_g(k))\beta_g(b))
\end{align*}
and the computation is the same for $\Gamma(\beta)_g(\Phi(T))$. Therefore, the $*$-homomorphism $\Phi$ is $G$-equi\-variant.
\end{proof}


\section{Topological aspects}

In this section, we discuss various topological subtleties that arise when one tries to find a right analog of a principal bundle in the theory of group actions on C*-algebras. 

\subsection{Unreduced cones of locally compact Hausdorff groups}\label{secone}

Although some of the concepts and results of this subsection apply to general topological spaces, we focus on the case of a locally compact Hausdorff group $G$. The {\em unreduced cone} $\cC G$ of $G$ is the quotient topological space
\[
\cC G:=([0,1]\times G)/(\{0\}\times G).
\]
We denote the quotient topology on $\cC G$ by $\tau_q$, the quotient map by $\pi:[0,1]\times G\to\cC G$, and call $[(0,g)]\in\cC G$ the {\em tip} of the cone. There is a continuous (right) $G$-action on $\cC G$ defined as follows
\[
\cC G\times G\longrightarrow \cC G:\qquad ([(t,g)],h)\longmapsto [(t,gh)].
\]
Indeed, since $G$ is locally compact Hausdorff, this action is continuous by an old lemma of Whitehead~\cite[Lemma~4]{jhc-w48} (see~\cite{e-m68} for a modern formulation of the result).

Recall that every locally compact Hausdorff group is paracompact Hausdorff~(see e.g.~\cite[Exercise~9, p.~261]{jr-m00} or~\cite[Corollary~3.1.4]{at-08} for a stronger statement). Then $[0,1]\times G$ is paracompact Hausdorff as a product of a compact Hausdorff space and a paracompact Hausdorff space.
Taking this into account, it is straightforward to show that the space $\cC G$ is completely regular Hausdorff. However, it may no longer be locally compact (the tip in $\cC G$ does not have any compact neighborhood). These observations are crucial from the point of view of noncommutative topology, since it implies the anti-equivalences~(\ref{Ggelfnaim}) and~(\ref{GWor}) do not apply in this case.

Consider the maps
\begin{equation}\label{height}
{\rm t}:\cC G\to [0,1],\qquad [(t,g)]\mapsto t,
\end{equation}
\vspace*{-3mm}
\begin{equation}\label{gfunct}
{\rm g}:{\rm t}^{-1}((0,1])\to G,\qquad [(t,g)]\mapsto g.
\end{equation}
We call ${\rm t}$ the {\em height} function. The above maps are continuous with respect to $\tau_q$. Indeed, observe that ${\rm t}\circ\pi:[0,1]\times G\to [0,1]$ is the projection onto the first factor. Similarly, since ${\rm t}^{-1}((0,1])\cong(0,1]\times G$, the map ${\rm g}\circ\pi|_{(0,1]\times G}$ is the projection onto the second factor.
For the purposes of the next subsection, we consider a different topology $\tau_M$ on $\cC G$, i.e. the initial topology with respect to the maps~(\ref{height}) and~(\ref{gfunct}) with a subbase given by the collection
\[
\{{\rm t}^{-1}(U),\; {\rm g}^{-1}(V)~:~U\text{ is open in }[0,1],\; V\text{ is open in }G\}.
\]
The above topology was introduced by Milnor for the topological join in~\cite{j-m56} (observe that $\cC G$ as a set is equal to the topological join of $G$ with the singleton set). 
Note that $\tau_q$ is finer than $\tau_M$. Furthermore, the map $\pi:[0,1]\times G\to\cC G$ is continuous with respect to the topology $\tau_M$ on $\cC G$. Our aim is to show that these topologies are in fact the same for every locally compact Hausdorff group $G$.
\begin{lemma}\label{fcg}
Let $G$ be a locally compact Hausdorff group. Then the family of compact subsets
\[
\mathcal{F}_{\cC G}:=\{\pi(K)~:~K\text{ is a compact subset of $[0,1]\times G$}\}
\]
generates the topology on $(\cC G,\tau_M)$.
\end{lemma}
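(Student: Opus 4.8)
The plan is to read the conclusion as the assertion that $\tau_M$ is the final topology on the set $\cC G$ determined by the inclusions $\pi(K)\hookrightarrow\cC G$, $K$ ranging over the compact subsets of $[0,1]\times G$; equivalently, that a set $V\subseteq\cC G$ is $\tau_M$-open if and only if $V\cap\pi(K)$ is open in $\pi(K)$ for every such $K$, each $\pi(K)$ carrying its subspace topology. First I would dispatch the soft part. Since $\pi$ is $\tau_M$-continuous (noted in the text), every $\pi(K)$ is $\tau_M$-compact; the functions ${\rm t}$ and ${\rm g}$ separate the points of $\cC G$, so $(\cC G,\tau_M)$ is Hausdorff, whence $\pi(K)$ is $\tau_M$-closed and $\pi|_K:K\to\pi(K)$ is a closed, hence quotient, surjection onto a compact Hausdorff space. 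In particular the subspace topology on $\pi(K)$ coincides with the quotient topology induced by $\pi|_K$ and with the subspace topology inherited from $\tau_q$, which will let me move freely between these descriptions. The ``only if'' direction is then immediate, since subspace inclusions are continuous, so the whole content lies in the ``if'' direction.

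So assume $V\cap\pi(K)$ is open in $\pi(K)$ for every compact $K\subseteq[0,1]\times G$, and take $x\in V$; I would produce a $\tau_M$-open neighbourhood of $x$ inside $V$, splitting on whether $x$ is the tip. If ${\rm t}(x)=t_0>0$, then $x$ lies in the open subspace ${\rm t}^{-1}((0,1])\cong(0,1]\times G$, which is locally compact Hausdorff and on which $\tau_M$, $\tau_q$ and the product topology agree; choosing a compact neighbourhood $C$ of ${\rm g}(x)$ in $G$ and setting $K=[t_0/2,1]\times C$ makes $\pi(K)$ an honest $\tau_M$-neighbourhood of $x$ (it contains the $\tau_M$-open set ${\rm t}^{-1}((t_0/2,1))\cap{\rm g}^{-1}({\rm int}\,C)\ni x$), so openness of $V\cap\pi(K)$ in $\pi(K)$ yields the required neighbourhood; this case is routine point-set topology. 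If $x$ is the tip, one checks from the subbasis that the sets ${\rm t}^{-1}([0,\varepsilon))$, $\varepsilon>0$, form a $\tau_M$-neighbourhood base at $x$, so it suffices to find a single $\varepsilon>0$ with ${\rm t}^{-1}([0,\varepsilon))\subseteq V$, i.e. $[0,\varepsilon)\times G\subseteq\pi^{-1}(V)$. Applying the hypothesis to $K_C=[0,1]\times C$ for each compact $C\subseteq G$ — where $\pi(K_C)$ is the cone over the compact set $C$, whose topology near the tip is governed by the height function alone — gives for each $C$ some $\varepsilon_C>0$ with $[0,\varepsilon_C)\times C\subseteq\pi^{-1}(V)$.

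The crux, and the step I expect to be the genuine obstacle, is exactly this last passage: upgrading the slice-wise bounds $\{\varepsilon_C\}_C$ to one uniform $\varepsilon$ valid over all of $G$, equivalently showing that an open subset of $[0,1]\times G$ containing $\{0\}\times G$ contains a tube $[0,\varepsilon)\times G$. When $G$ is compact this is the tube lemma and is immediate (take $C=G$), so the entire weight of the statement sits in the non-compact case, and I would expect to have to use local compactness — and presumably paracompactness — of $G$ in an essential way here, e.g. by covering $G$ by interiors of compacta and patching the local heights, or by invoking structure theory of locally compact groups. Everything else (the soft facts of the first paragraph and Case (a)) is elementary. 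If the uniform-height step resists, the safe fallback is to prove instead the analogue with $\tau_q$ in place of $\tau_M$, where $\pi$ is a quotient map out of the $k$-space $[0,1]\times G$ and the generation statement is immediate.
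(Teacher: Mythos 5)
Your reduction is faithful to what the lemma asserts, the soft facts and the non-tip case are fine, and you have located the crux exactly where it is. But the uniform-height step you flag is not a hard step waiting for a clever use of paracompactness: it is false for every non-compact $G$, and with it the lemma itself fails as stated. Your own computation of the neighbourhood base at the tip (every basic $\tau_M$-neighbourhood of the tip is a finite intersection of sets ${\rm t}^{-1}(U_i)$ with $0\in U_i$, hence contains a full tube ${\rm t}^{-1}([0,\varepsilon))$) turns the slice-wise data into a counterexample. Take $G=\bZ$ and $V:=\bigcup_{n\in\bZ}\pi\bigl([0,\tfrac{1}{|n|+1})\times\{n\}\bigr)$. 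Every compact $K\subseteq[0,1]\times\bZ$ lies in some $[0,1]\times F$ with $F$ finite, and $V\cap\pi([0,1]\times F)=\bigcup_{n\in F}\pi([0,\tfrac{1}{|n|+1})\times\{n\})$ is open in the finite wedge $\pi([0,1]\times F)$; so $V$ meets every member of $\mathcal{F}_{\cC G}$ in a relatively open set, yet $V$ contains no tube and hence is not $\tau_M$-open. (For $G=\bR$, the image of $\{(t,x):t<1/(1+x^2)\}$ does the same job.) So the gap cannot be closed: the statement is the tube lemma when $G$ is compact and is false otherwise.

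For comparison, the paper's proof of the tip case takes a net in $A$ converging to the tip, chooses a relatively compact open subset $V$ of $G$, and declares $U:=\cC G\setminus{\rm g}^{-1}(G\setminus V)=\{\mathrm{tip}\}\cup{\rm g}^{-1}(V)$ to be a $\tau_M$-open neighbourhood of the tip, forcing the net eventually into $\pi([0,1]\times\overline{V})$. By the very description of the neighbourhood base at the tip above, $U$ contains no tube and so is not $\tau_M$-open unless $V=G$, i.e.\ unless $G$ is compact: the paper's argument breaks at precisely the step you identified. Your proposed fallback is the right move --- the family $\mathcal{F}_{\cC G}$ does generate $\tau_q$, since $(\cC G,\tau_q)$ is a Hausdorff quotient of a compactly generated space and $\pi$ is semiproper, which is the first half of the proof of Proposition~\ref{topequiv} --- but then, because the two topologies induce the same topology on each $\pi(K)$ and only $\tau_q$ is generated by these subspaces, one concludes $\tau_M\subsetneq\tau_q$ for non-compact $G$, so the subsequent identification of the two topologies also needs to be revisited.
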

\begin{proof}
We need to prove that
\[
A\subseteq \cC G\text{ is closed}\quad\iff\quad A\cap B\text{ is closed in }B\text{ for all }B\in\mathcal{F}_{\cC G}.
\]
($\Rightarrow$) Assume that $A$ is closed and take $B\in\mathcal{F}_{\cC G}$. The set $B$ is of the form $\pi(K)$, where $K$ is a~compact subset of $[0,1]\times G$. Since $\pi$ is continous with respect to $\tau_M$, $B$ is closed as a compact subset of a~Hausdorff space. Hence, $A\cap B$ is closed.

\noindent($\Leftarrow$) It suffices to show that any limit point of $A$ in $\cC G$ is a limit point of $A\cap\pi(K)$ for some compact $K\subseteq [0,1]\times G$. First, assume that $A$ has a~limit point $[(t,g)]\in\cC G\setminus\{[(0,g)]\}$. Since the continuous map $\pi$ is a homeomorphism when restricted to $(0,1]\times G$, the result follows by taking $K$ to be any compact neighbourhood of $(t,g)$ in the locally compact product space $(0,1]\times G$. Next, assume that the tip $[(0,g)]$ is a limit point of $A$, i.e. that there is a  net $(x_\lambda)_{\lambda\in\Lambda}$ in $A$ that converges to $[(0,g)]$ in $(\cC G,\tau_M)$ (without loss of generality we assume that this net is not  eventually constant). By local compactness, pick any $g\in G$ and its open neighbourhood $V$ such that its closure $\overline{V}$ is compact. Consider the following open neighbourhood
\[
U:=\cC G\setminus {\rm g}^{-1}(G\setminus V) 
\]
of $[(0,g)]$ in $(\cC G,\tau_M)$. Then there exists $\lambda_0\in\Lambda$ such that for all $\mu\geq\lambda_0$ we have that $x_\mu\in U$. The non-empty set
\[
\Lambda_U:=\{\mu\in\Lambda~:~\mu\geq\lambda_0\}
\]
is cofinal, so we can consider a subnet $(x_\mu)_{\mu\in\Lambda_U}$\, that has to converge to $[(0,g)]$ as well. Take $K:=[0,1]\times\overline{V}$. Since $x_\mu\in\pi(K)$ for all $\mu\in\Lambda_U$, we obtain that $[(0,g)]$ is a limit point of $A\cap\pi(K)$.
\end{proof}
\begin{remark}
Note that the above lemma is true for any locally compact Hausdorff space.
\end{remark}

\begin{proposition}\label{topequiv}
Let $G$ be a locally compact Hausdorff group. Then the topologies $\tau_q$ and $\tau_M$ on $\cC G$ are equivalent.
\end{proposition}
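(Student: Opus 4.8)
The plan is to show that the identity map $(\cC G,\tau_q)\to(\cC G,\tau_M)$ is a homeomorphism. One direction is free: since $\pi\colon[0,1]\times G\to(\cC G,\tau_M)$ is continuous and $\tau_q$ is the quotient topology, every $\tau_M$-open set is $\tau_q$-open, i.e. $\tau_q$ is finer than $\tau_M$ (this was already observed in the text). So the real content is the reverse inclusion: every $\tau_q$-closed set is $\tau_M$-closed. Equivalently, I want to see that $\tau_q$ is generated by the family $\mathcal{F}_{\cC G}=\{\pi(K): K\text{ compact in }[0,1]\times G\}$, because Lemma~\ref{fcg} says this family generates $\tau_M$.

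Here is how I would carry this out. Let $A\subseteq\cC G$ be $\tau_q$-closed; I must show $A$ is $\tau_M$-closed, and by Lemma~\ref{fcg} it suffices to check that $A\cap\pi(K)$ is closed in $\pi(K)$ for every compact $K\subseteq[0,1]\times G$. Fix such a $K$. By definition of the quotient topology, $A$ being $\tau_q$-closed means $\pi^{-1}(A)$ is closed in $[0,1]\times G$. Hence $\pi^{-1}(A)\cap K$ is closed in the compact set $K$, so it is compact, and therefore $\pi\big(\pi^{-1}(A)\cap K\big)$ is compact (continuity of $\pi$), hence closed in the Hausdorff space $(\cC G,\tau_M)$ — here I use that $(\cC G,\tau_M)$ is Hausdorff, which follows from the complete regularity/Hausdorffness established in \S\ref{secone}, or more elementarily from the fact that $\tau_M$ is the initial topology for ${\rm t}$ and ${\rm g}$ into Hausdorff spaces together with $\tau_M$ being finer than nothing problematic. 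Now $\pi\big(\pi^{-1}(A)\cap K\big)=A\cap\pi(K)$: the inclusion $\subseteq$ is clear, and for $\supseteq$, if $x\in A\cap\pi(K)$ then $x=\pi(k)$ for some $k\in K$, and $k\in\pi^{-1}(A)$ since $\pi(k)=x\in A$. Thus $A\cap\pi(K)$ is closed in $(\cC G,\tau_M)$, a fortiori closed in the subspace $\pi(K)$. By Lemma~\ref{fcg}, $A$ is $\tau_M$-closed, completing the argument.

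The one delicate point — the step I would flag as the main obstacle, or at least the one requiring care — is the assertion that a compact subset of $(\cC G,\tau_M)$ is closed, i.e. that $(\cC G,\tau_M)$ is Hausdorff (or at least KC). This is where local compactness of $G$ is genuinely used: it enters through Lemma~\ref{fcg} (whose proof invokes compact neighbourhoods in $G$) and through the verification in \S\ref{secone} that $\cC G$ is completely regular Hausdorff. Concretely, two distinct points of $\cC G$ are separated by $\tau_M$ either because they have different heights (use ${\rm t}$), or because they have the same positive height but differ in the $G$-coordinate (use ${\rm g}$ and Hausdorffness of $G$); the tip is separated from any point of positive height by ${\rm t}$. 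So $(\cC G,\tau_M)$ is Hausdorff, and the compact-implies-closed step goes through. Everything else is the standard bookkeeping of quotient topologies and is routine; I would not belabour it. Once both inclusions $\tau_q\supseteq\tau_M$ and $\tau_q\subseteq\tau_M$ are in hand, $\tau_q=\tau_M$ and the proposition is proved.
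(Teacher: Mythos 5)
Your argument is a valid deduction from Lemma~\ref{fcg}, but it takes a genuinely different route from the paper's. The paper shows that $(\cC G,\tau_q)$ is \emph{also} generated by the family $\mathcal{F}_{\cC G}$, by combining two external inputs: $(\cC G,\tau_q)$ is compactly generated as a Hausdorff quotient of a compactly generated space (Steenrod), and $\pi$ is semiproper by a theorem of Goebel that uses paracompactness of $G$, so every $\tau_q$-compact set lies in some $\pi(K)$; the two topologies then coincide because they are generated by the same compacta carrying the same subspace topology. You instead verify directly that every $\tau_q$-closed set is $\tau_M$-closed via the identity $A\cap\pi(K)=\pi\bigl(\pi^{-1}(A)\cap K\bigr)$ together with Hausdorffness of $(\cC G,\tau_M)$. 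Your route is more economical: it needs neither compact generation of the quotient nor semiproperness of $\pi$, and it never invokes paracompactness of $G$. Your Hausdorff check via ${\rm t}$ and ${\rm g}$ is the right one; the alternative appeal to the complete regularity established in \S~\ref{secone} would not do, since that property is proved there for $\tau_q$, and a coarser topology need not inherit it.

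That very economy, however, is a warning sign rather than a bonus. Your proof derives the proposition from Lemma~\ref{fcg} and Hausdorffness alone, for \emph{any} locally compact Hausdorff space in place of $G$ --- which contradicts the paper's own remark following the proposition that the statement can fail in that generality. The problem is upstream, in Lemma~\ref{fcg}, which both you and the paper rely on. Every basic $\tau_M$-neighbourhood of the tip is of the form ${\rm t}^{-1}(U)$ with $0\in U$ (no subbasic set ${\rm g}^{-1}(V)$ contains the tip), hence contains ${\rm t}^{-1}([0,\varepsilon))$ for some $\varepsilon>0$. Consequently, for $G=\bZ$ the set $A=\{[(1/n,n)]:n\geq 1\}$ has the tip as a $\tau_M$-limit point even though $A\cap\pi(K)$ is finite, hence closed, for every compact $K\subseteq[0,1]\times\bZ$; this refutes the ($\Leftarrow$) direction of Lemma~\ref{fcg} (the set $U=\cC G\setminus{\rm g}^{-1}(G\setminus V)$ used in its proof is not $\tau_M$-open at the tip). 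Since this $A$ is $\tau_q$-closed, it in fact shows $\tau_q\neq\tau_M$ for $G=\bZ$. So while your deduction is cleaner than the paper's, both arguments stand or fall with Lemma~\ref{fcg}, and you should not regard the proposition as settled without repairing that lemma.
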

\begin{proof}
By Lemma~\ref{fcg}, the topology on $(\cC G,\tau_M)$ is generated by compact subsets of the form $\pi(K)$, where $K$ is a compact subset of $[0,1]\times G$. Let us observe that the same is true for $(\cC G,\tau_q)$. 
First, note that $(\cC G,\tau_q)$ is compactly generated as a Hausdorff quotient of a compactly generated space (see e.g.~\cite[\S~2.6]{ne-s67}).
Next, since $G$ is paracompact Hausdorff, from~\cite[Theorem~9]{r-g03} we infer that $\pi$ is semiproper (or compact-covering), i.e. for every compact subset $B$ of $(\cC G,\tau_q)$, there exists a~compact subset $K$ of $[0,1]\times G$ such that $B\subseteq\pi(K)$. This concludes the proof.
\end{proof}
\begin{remark}
Note that by~\cite[Theorems~5~and~6]{r-g03} the above result might not be true for arbitrary locally compact Hausdorff spaces. However, it will be true for any locally compact Hausdorff space that is paracompact.
\end{remark}

In conclusion, we have proved that for a locally compact Hausdorff group $G$, the topology on $\cC G$ is both an initial topology with respect to the maps~${\rm t}$ and~${\rm g}$ and a final topology with respect to the map $\pi$. This means that we are well placed for deciding continuity of functions going both from and into the unreduced cone of $G$.


\subsection{Numerable principal bundles}

Let $X$ be a topological space and let $\{f_\lambda\}$ be a family of continuous functions from $X$ to $[0,1]$ indexed by a set $\Lambda$. Recall that $\{f_\lambda\}$ is a {\em partition of unity} on $X$ if 
\begin{enumerate}
\item it is {\em locally finite}, i.e. for every $x\in X$ there is a neighbourhood $U$ of $x$ such that $f_\lambda|_{U}\neq 0$ for only finitely many $\lambda\in\Lambda$,
\item and the finite sum $\sum_{\lambda\in\Lambda} f_\lambda(x)$ equals $1$ for all $x\in X$. 
\end{enumerate}
A covering $\{U_\lambda\}$ of $X$ is called {\em numerable} if there is a partition of unity $\{f_\lambda\}$ on $X$ {\em subordinated} to it, i.e. ${\rm supp}f_\lambda\subseteq U_\lambda$ for all $\lambda\in\Lambda$, where ${\rm supp}f_\lambda$ denotes the closed support of $f_\lambda$. By~\cite[Corollary~13.1.9]{t-td08} we can always reduce arbitrary partitions of unity to countable ones. Hence, in what follows we assume that $\Lambda=\bN$.  

Following the terminology of~\cite[p.~332]{t-td08}, we call a topological $G$-space $X$ {\em trivial} if there is a~$G$-map $X\to G$ and {\em locally trivial} if it has an open covering by trivial $G$-spaces (such a cover is called {\em trivializing}). If $X$ is a locally trivial $G$-space, we say that $X\to X/G$ is a {\em principal $G$-bundle}. 
A~principal bundle $X\to X/G$ is called {\em numerable} if there is a numerable trivializing cover of $X$. In particular, when $X$ is paracompact Hausdorff, then any principal $G$-bundle $X\to X/G$ is numerable. 

Let us now give a new description of numerable principal bundles, which admits a noncommutative generalization (see \S~\ref{mainsubsection}).
\begin{theorem}\label{numprin}
Let $G$ be a locally compact Hausdorff group and let $X$ be a locally compact Hausdorff \mbox{$G$-space}. Then the following conditions are equivalent:
\begin{enumerate}
\item[{\rm (1)}] $X\to X/G$ is a numerable principal $G$-bundle.
\item[{\rm (2)}] There exist $G$-equivariant continuous maps $\rho_i:X\to\cC G$, $i\in\bN$, such that 
the sequence of continuous functions $(\sum_{i=0}^N({\rm t}\circ\rho_i))_{N\in\bN}$ converges to $1$ in the compact-open topology.
\end{enumerate}
\end{theorem}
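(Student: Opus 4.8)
The plan is to establish both implications via the classical dictionary between trivializing sections and maps to $\cC G$, transported through the compact-open topology. First recall the basic fact (going back to Milnor and spelled out in~\cite{t-td08}) that a $G$-map $f_i\colon V_i\to G$ on an open $G$-invariant set $V_i\subseteq X$ is the same data as a continuous $G$-map $\wt\rho_i\colon X\to\cC G$ with $\wt\rho_i^{-1}({\rm t}^{-1}((0,1]))=V_i$: namely, given $f_i$ and a subordinate bump function $h_i\colon X\to[0,1]$ with $\mathrm{supp}\,h_i\subseteq V_i$, one sets $\rho_i(x):=[(h_i(x),f_i(x))]$ for $x\in V_i$ and $\rho_i(x):=$ tip otherwise. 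Continuity of $\rho_i$ at points of $V_i$ is clear, and at points outside $\mathrm{supp}\,h_i$ the value is locally constantly the tip; the only delicate points are on $\partial(\mathrm{supp}\,h_i)$, and here one uses precisely the description of the topology on $\cC G$ established in Proposition~\ref{topequiv}: a neighbourhood of the tip of the form $\cC G\setminus {\rm g}^{-1}(G\setminus W)$ together with ${\rm t}^{-1}([0,\varepsilon))$ is pulled back into a neighbourhood of $x$ because $h_i\to 0$. Note ${\rm t}\circ\rho_i=h_i$ by construction, so the $\kappa$-convergence condition in~(2) translates exactly into $\sum_{i} h_i\to 1$ in the compact-open topology.

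For $(1)\Rightarrow(2)$: choose a countable numerable trivializing cover $\{V_i\}_{i\in\bN}$ with $G$-maps $f_i\colon V_i\to G$, and a subordinate partition of unity $\{h_i\}$; since the $V_i$ are $G$-invariant and the $G$-action preserves the $h_i$ up to reindexing — more carefully, one replaces $h_i$ by its ``$G$-average'' $x\mapsto h_i(x)$ which is already $G$-invariant because $X\to X/G$ is the quotient and the cover may be taken to be pulled back from $X/G$ — the $h_i$ descend to a locally finite partition of unity, hence $\sum_{i=0}^N h_i\to 1$ uniformly on compacts (local finiteness plus the fact that on a compact set only finitely many $h_i$ are nonzero gives actual eventual equality $\sum_{i=0}^N h_i=1$ there). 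Define $\rho_i$ as above; then the $\rho_i$ are $G$-equivariant continuous maps $X\to\cC G$ and $\sum_{i=0}^N {\rm t}\circ\rho_i=\sum_{i=0}^N h_i\to 1$ in $\kappa$.

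For $(2)\Rightarrow(1)$: set $h_i:={\rm t}\circ\rho_i$ and $V_i:=\rho_i^{-1}({\rm t}^{-1}((0,1]))=h_i^{-1}((0,1])$, an open $G$-invariant subset of $X$, and on $V_i$ let $f_i:={\rm g}\circ\rho_i\colon V_i\to G$, which is continuous (by continuity of ${\rm g}$ on ${\rm t}^{-1}((0,1])$) and $G$-equivariant since $\rho_i$ is. This exhibits $V_i$ as a trivial $G$-space. It remains to check that $\{V_i\}$ is a trivializing \emph{cover} and that it is numerable. Covering: for $x\in X$, $\kappa$-convergence of $\sum h_i$ to $1$ applied to the compact set $\{x\}$ (equivalently, any compact neighbourhood of $x$) forces $\sum_i h_i(x)=1$, so some $h_i(x)>0$, i.e. $x\in V_i$. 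For numerability we want a partition of unity subordinate to $\{V_i\}$; the natural candidate is to renormalize, $u_i:=h_i/\sum_j h_j$, but one must first know the sum is \emph{locally finite} — and that, rather than convergence per se, is the main obstacle. I would extract local finiteness from the $\kappa$-convergence as follows: fix $x$ and a compact neighbourhood $K$; since $(\sum_{i=0}^N h_i)_N$ is $\kappa$-Cauchy, $\sup_{y\in K}\sum_{i=N+1}^{M}h_i(y)\to 0$, so for large $N$ we have $\sum_{i>N}h_i<1$ on $K$ while $\sum_{i\le N}h_i$ is within, say, $1/3$ of $1$ on $K$; this does not immediately kill the tail but shows $\sum_j h_j\ge 2/3$ on $K$, so $u_i=h_i/\sum_j h_j$ is well-defined and continuous on $K$, and $\mathrm{supp}\,u_i\subseteq\overline{\{h_i\neq 0\}}\subseteq V_i$ once we know $\{V_i\}$ is locally finite — which we get because, shrinking $K$ if necessary, the $\kappa$-Cauchy estimate gives an $N$ with $\sum_{i>N}h_i<1/3$ on $K$, whence every $y\in K$ lies in $V_i$ for some $i\le N$ and, more to the point, $h_i|_K\neq 0$ only for $i$ in a finite set (those $i\le N$ together with any finitely many larger $i$ that the tail bound still permits — here one iterates the Cauchy estimate to conclude that in fact only finitely many $h_i$ are nonzero on $K$). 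Granting local finiteness, $\{u_i\}$ is a partition of unity subordinate to $\{V_i\}$, so $X\to X/G$ is a numerable principal $G$-bundle. I expect the extraction of local finiteness of the support family from mere $\kappa$-convergence of the partial sums to be the technical heart of the argument, and it is worth isolating as a preliminary lemma (it is a purely topological statement about sequences of $[0,1]$-valued functions on a locally compact Hausdorff space whose partial sums converge uniformly on compacta to the constant $1$).
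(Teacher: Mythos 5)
Your implication (1)$\Rightarrow$(2), and the construction of the trivializations $\varphi_i={\rm g}\circ\rho_i$ on $V_i=({\rm t}\circ\rho_i)^{-1}((0,1])$ in (2)$\Rightarrow$(1), follow the same route as the paper and are essentially fine (both you and the paper take for granted that a $G$-invariant trivializing cover admits a $G$-invariant subordinate partition of unity, obtained by pulling back from $X/G$). The problem is the step you yourself flag as the technical heart: extracting numerability of $\{V_i\}$ from the $\kappa$-convergence of $\bigl(\sum_{i=0}^N h_i\bigr)_N$ to $1$. The ``preliminary lemma'' you propose --- that uniform convergence on compacta of the partial sums to the constant $1$ forces the family $\{h_i\neq 0\}$ to be locally finite --- is false. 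Take $h_i\equiv 2^{-(i+1)}$ on any $X$: the partial sums converge uniformly to $1$, yet every $h_i$ is nonzero everywhere, and no amount of iterating the Cauchy estimate changes this; a tail bound $\sum_{i>N}h_i<1/3$ on $K$ only forces the tail terms to be small on $K$, not to vanish. A second defect of the same step: even granting local finiteness, your renormalized functions $u_i=h_i/\sum_j h_j$ satisfy ${\rm supp}\,u_i=\overline{h_i^{-1}((0,1])}$, which is in general \emph{not} contained in $V_i=h_i^{-1}((0,1])$ (think of $h_0(x)=\max(0,1-|x|)$ on $\bR$), so $\{u_i\}$ is not subordinate to $\{V_i\}$ in the sense required (closed supports inside the open sets).

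What is actually true, and what the paper invokes, is the lemma on \emph{generalized} partitions of unity (\cite[Lemma~13.1.7]{t-td08}, going back to Dold): if $(h_i)$ are continuous $[0,1]$-valued functions with $\sum_i h_i(x)=1$ pointwise (which compact convergence gives you, since it implies pointwise convergence), then the cover by the cozero sets $h_i^{-1}((0,1])$ is numerable. Its proof does not claim local finiteness of the original family; it manufactures a \emph{new}, shrunken, locally finite partition of unity subordinate to those cozero sets (via a max-type truncation such as $x\mapsto\max\bigl(0,\,h_n(x)-n\sum_{j<n}h_j(x)\bigr)$ and renormalization). So your overall architecture is the paper's, but the one lemma you need is not the one you tried to prove, and the one you tried to prove is false. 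Replacing your local-finiteness argument by a citation of (or a proof of) the generalized-partition-of-unity lemma repairs the proof; you should also note, as the paper does, that the resulting cover is $G$-invariant because each ${\rm t}\circ\rho_i$ is $G$-invariant (the tip being a fixed point).
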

\begin{proof}
First, assume that $X\to X/G$ is a numerable principal $G$-bundle. Let $\{U_i\}_{i\in\bN}$ be a~numerable trivializing cover of $X$ with trivializing maps $\varphi_i:U_i\to G$ and let $\{f_i\}$ be the $G$-invariant partition of unity  subordinated to that cover. One can always find such a partition of unity since every trivializing cover is $G$-invariant. Let $\widetilde{\varphi}_i$ be any (not necessarily continuous) extension of $\varphi_i$ to whole $X$.
Next, we define the maps
\[
\rho_i:X\overset{f_i\times\widetilde{\varphi}_i}{\longrightarrow}[0,1]\times G\overset{\pi}{\longrightarrow} \cC G:\quad x\longmapsto  [(f_i(x),\widetilde{\varphi}_i(x))] .
\]
Since $({\rm t}\circ\rho_i)(x)=f_i(x)$ and $({\rm g}\circ\rho_i)(x)=\varphi_i(x)$ (whenever it is well defined), from the definition of the Milnor topology $\tau_M$ and Proposition~\ref{topequiv}, we infer that the maps $\rho_i$ are continuous. They are also $G$-equivariant as each $\varphi_i$ is $G$-equivariant and the tip $[(0,g)]$ is a fixed point of the $G$-action on $\mathcal{C}G$. Observe that
\[
\sum_{i=0}^N({\rm t}\circ\rho_i)=\sum_{i=0}^Nf_i\qquad\text{for all $N\in\bN$}.
\]
Since $\{f_i\}$ is a partition of unity, the sequence $(\sum_{i=0}^N({\rm t}\circ\rho_i))_N$ converges locally uniformly to $1$. Hence, by local compactness of $X$, it also converges to $1$ in the compact-open topology.

Next, assume that there are $G$-equivariant maps $\rho_i:X\to \cC G$ such that $(\sum_{i=0}^N({\rm t}\circ\rho_i))_N$ converges  to $1$ in the compact-open topology. Compact convergence implies pointwise convergence, so that the family $\{{\rm t}\circ\rho_i\}$ is a~$G$-invariant generalized partition of unity in the sense of~\cite[p.~320]{t-td08}. By~\cite[Lemma~13.1.7]{t-td08} the covering $\{U_i:=({\rm t}\circ\rho_i)^{-1}((0,1])\}$ is numerable. Furthermore, the equivariance of the maps $\rho_i$ implies that this cover is $G$-invariant. It is left to check whether it is trivializing. To this end, we define the maps
\[
\varphi_i:U_i\longrightarrow G:\qquad x\longmapsto {\rm g}(\rho_i(x)).
\]
Here ${\rm g}:{\rm t}^{-1}((0,1])\to G$ is the map given by~(\ref{gfunct}). Each $\varphi_i$ is continuous and $G$-equivariant as a~composition of two continuous $G$-equivariant maps, so we are done.
\end{proof}
\begin{remark}
We formulate Theorem~\ref{numprin} using the compact-open topology, since this topology can be generalized to the noncommutative setting. However, the statement of the theorem would remain true if we would use locally uniform or even point-wise convergence instead.
\end{remark}


\section{Noncommutative numerable principal bundles}\label{nnpb}

In this section, we present a definition which generalizes the notion of a locally compact Hausdorff numerable principal bundle to the noncommutative setting. At the same time, this definition will generalize the local-triviality dimensions to the context of actions of locally compact Hausdorff groups on (not necessarily unital) C*-algebras. 

\subsection{Locally trivial $G$-C*-algebras}\label{mainsubsection}
We start with some basics on the local-triviality dimensions. Recall from \S~\ref{secone} that ${\rm t}$ denotes the height function in $C(\cC G)$. The following definition should be compared with~\cite{cpt-21,ghtw-18}.
\begin{definition}\label{ltdim}
Let $G$ be a compact Hausdorff group and let $A$ be a unital $G$-C*-algebra.
\begin{enumerate}
\item The {\em weak local-triviality dimension} $\dim_{\rm WLT}^G(A)$ is the smallest $n$ for which there exist unital $G$-equivariant $*$-homomorphisms $\rho_0\,,\ldots,\rho_n:C(\cC G)\to A$ such that $\sum_{i=0}^n\rho_i({\rm t})$ is invertible.
\item The {\em (plain) local-triviality dimension} $\dim_{\rm LT}^G(A)$ is the smallest $n$ for which there exist unital $G$-equivariant $*$-homomorphisms $\rho_0\,,\ldots,\rho_n:C(\cC G)\to A$ such that $\sum_{i=0}^n\rho_i({\rm t})=1$.
\item The {\em strong local-triviality dimension} $\dim_{\rm SLT}^G(A)$ is the smallest $n$ for which there exist unital $G$-equivariant $*$-homomorphisms $\rho_0\,,\ldots,\rho_n:C(\cC G)\to A$ with commuting images and such that $\sum_{i=0}^n\rho_i({\rm t})$=1.
\end{enumerate}
In any of the above cases, if there is no such finite $n$, then the associated dimension equals $\infty$.
\end{definition}
We list some known results about the local-triviality dimensions (see~\cite{cpt-21,ghtw-18}):
\begin{enumerate}
\item[(i)] We always have that
\[
\dim_{\rm SLT}^G(A)\geq \dim_{\rm LT}^G(A)\geq \dim_{\rm WLT}^G(A).
\]
When $A$ is commutative, then the above inequalities become equalities. There are also examples showing that these inequalities can be strict.
\item[(ii)] If a compact Hausdorff group $G$ acts on a unital C*-algebra of the form $A=C(X)$ for some compact Hausdorff space $X$, then finiteness of any of the local-triviality dimensions is equivalent to $X\to X/G$ being a principal $G$-bundle.
\item[(iii)] If $\dim_{\rm WLT}^G(A)<\infty$, then the $G$-action on $A$ is free in the sense of Ellwood~\cite{da-e00}. Hence by the results of Baum, de Commer, and Hajac~\cite{bdch-17}, we have that at the algebraic level the triple $(A,A^G, G)$ is a compact quantum principal bundle.
\item[(iv)] There are many examples of actions with finite local-triviality dimensions including antipodal actions on quantum spheres and gauge actions on various graph C*-algebras.
\end{enumerate}

The following definition is inspired by Theorem~\ref{numprin} and Definition~\ref{ltdim}.
\begin{definition}\label{fulllt}
Let $G$ be a locally compact Hausdorff group acting on a C*-algebra $A$.
\begin{enumerate}
\item We say that $A$ is a {\em weakly locally trivial $G$-C*-algebra} if there exist unital $G$-equivariant \mbox{$\kappa$-con}\-tin\-u\-ous $*$-homomorphisms $\rho_i:C(\cC G)\to \Gamma(K_A)$, $i\in\bN$, such that the sequence $(\sum_{i=0}^N\rho_i({\rm t}))_N$ 
converges to an invertible element $x\in \Gamma(K_A)$ in the $\kappa$-topology.
\item We say that $A$ is a {\em locally trivial $G$-C*-algebra} if there exist unital \mbox{$G$-equi}\-variant \mbox{$\kappa$-con}\-tin\-u\-ous \mbox{$*$-homo}\-morphisms $\rho_i:C(\cC G)\to \Gamma(K_A)$, $i\in\bN$, such that the sequence ($\sum_{i=0}^N\rho_i({\rm t}))_N$
converges to $1$ in the $\kappa$-topology.
\item We say that $A$ is a {\em strongly locally trivial $G$-C*-algebra} if there exist unital $G$-equivariant \mbox{$\kappa$-con}\-tin\-u\-ous \mbox{$*$-homo}\-morphisms $\rho_i:C(\cC G)\to \Gamma(K_A)$, $i\in\bN$, with commuting images and such that ($\sum_{i=0}^N\rho_i({\rm t}))_N$
converges to $1$ in the $\kappa$-topology.
\end{enumerate}
\end{definition}
Here are some observations and remarks:
\begin{enumerate}
\item[(i)] If $G$ is compact Hausdorff, then $\cC G$ is also compact Hausdorff. This turns $C(\cC G)$ into a~unital C*-algebra for which the $\kappa$-topology coincides with the norm topology. Furthermore, if $A$ is unital, then $\Gamma(K_A)=A$ and again the $\kappa$-topology coincides with the norm topology. Therefore, the unital $G$-equivariant $*$-homomorphisms $\rho_i$ in Definition~\ref{fulllt} are automatically continuous. We obtain that
\begin{align*}
\dim_{\rm WLT}^G(A)<\infty\quad&\iff\quad \text{$A$ is a unital weakly locally trivial $G$-C*-algebra},\\
\dim_{\rm LT}^G(A)<\infty\quad&~~~\Rightarrow~\quad\text{$A$ is a unital locally trivial $G$-C*-algebra},\\
\dim_{\rm SLT}^G(A)<\infty\quad&\iff\quad\text{$A$ is a unital strongly locally trivial $G$-C*-algebra}.
\end{align*}
The implication from left to right for $\dim^G_{\rm LT}$ is not clear at the moment, but we conjecture that it holds.
\item[(ii)] It might happen even beyond the compact/unital case that the number of $*$-homomorphisms $\rho_i$ from Definition~\ref{fulllt} is finite and then we could talk about the local-triviality dimension of a given action. However, this time finiteness of this dimension in the commutative case will not recover the notion of a principal bundle. This was possible only for unital C*-algebras generalizing compact spaces, since then every trivilizing cover has a finite subcover.
\item[(iii)]
Note that we have the following implications for $G$-C*-algebras:
\[
\text{strong local triviality}\quad\Rightarrow\quad\text{local triviality}\quad\Rightarrow\quad\text{weak local triviality}.
\]
\item[(iv)] 
Let $G$ be a locally compact Hausdorff group, let $A$ be a weakly locally trivial $G$-C*-algebra, and let $B$ be a $G$-C*-algebra. If there is a strictly non-degenerate $G$-morphism $A\to M(B)$, then it follows from Proposition~\ref{stricnondeg} that $B$ is a weakly locally trivial $G$-C*-algebra.
Hence, if $I$ is a $G$-invariant ideal of $A$, then $A/I$ is a weakly locally trivial $G$-C*-algebra. The same results hold if everywhere above we replace weakly locally trivial with either locally trivial or strongly locally trivial.
\item[(v)] 
The idea of describing the action of a group on a C*-algebra by equivariant maps into a~different object that can be obtained from that C*-algebra is not new. The Rokhlin dimension of an action of a second-countable compact group $G$ on a $\sigma$-unital C*-algebra $A$ is defined using $G$-equivariant completely positive contractive order zero maps from $C(G)$ to the central sequence algebra $A_\infty\cap A'$~(see e.g.~\cite{e-g19}). 
\end{enumerate}


\subsection{Classes of examples of locally trivial $G$-C*-algebras}

Definition~\ref{fulllt} enables us to talk about local triviality of non-unital $G$-C*-algebras. Recall that up to this point even actions of finite groups on non-unital C*-algebras were beyond the scope of applicability of the local-triviality dimensions. Therefore, we obtain a plethora of new examples.

\subsubsection{Classical numerable principal bundles}

First, observe that Definition~\ref{fulllt} generalizes the notion of a locally compact Hausdorff numerable principal $G$-bundle.
\begin{proposition}\label{commult}
Let $G$ be a locally compact Hausdorff group acting on a locally compact Hausdorff space $X$. Then the following are equivalent
\begin{enumerate}
\item[{\rm (1)}] $X\to X/G$ is a numerable principal $G$-bundle,
\item[{\rm (2)}] $C_0(X)$ is a strongly locally trivial $G$-C*-algebra,
\item[{\rm (3)}] $C_0(X)$ is a locally trivial $G$-C*-algebra,
\item[{\rm (4)}] $C_0(X)$ is a weakly locally trivial $G$-C*-algebra.
\end{enumerate}
\end{proposition}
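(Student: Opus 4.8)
The plan is to exploit the $G$-equivariant $*$-isomorphism $C(X)\cong\Gamma(K_{C_0(X)})$ of topological $*$-algebras (noted in \S\ref{sec2.2}), under which the $\kappa$-topology on $\Gamma(K_{C_0(X)})$ becomes the compact-open topology on $C(X)$, and then invoke Theorem~\ref{numprin} together with the dictionary of \S\ref{beyondGN} relating continuous $G$-maps of completely regular Hausdorff spaces to unital $G$-equivariant $\kappa$-continuous $*$-homomorphisms of their function algebras. Since (2)$\Rightarrow$(3)$\Rightarrow$(4) is immediate from observation (iii) after Definition~\ref{fulllt}, the real content is the equivalence of (1) with, say, (2) and with (4); I would prove (1)$\Rightarrow$(2) and (4)$\Rightarrow$(1), which closes the cycle.

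For (1)$\Rightarrow$(2): assume $X\to X/G$ is a numerable principal $G$-bundle. By Theorem~\ref{numprin} there exist $G$-equivariant continuous maps $\rho_i:X\to\cC G$, $i\in\bN$, with $\bigl(\sum_{i=0}^N({\rm t}\circ\rho_i)\bigr)_N\to 1$ in the compact-open topology. Both $X$ and $\cC G$ are completely regular Hausdorff ($X$ is even locally compact Hausdorff, and $\cC G$ was shown to be completely regular Hausdorff in \S\ref{secone}), so by the one-to-one correspondence of \S\ref{beyondGN} each $\rho_i$ dualizes to a unital $G$-equivariant $\kappa$-continuous $*$-homomorphism $\rho_i^*:C(\cC G)\to C(X)$, $f\mapsto f\circ\rho_i$. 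Transporting along the isomorphism $C(X)\cong\Gamma(K_{C_0(X)})$ gives unital $G$-equivariant $\kappa$-continuous $*$-homomorphisms $C(\cC G)\to\Gamma(K_{C_0(X)})$ with automatically commuting images (the target is commutative), and $\rho_i^*({\rm t})={\rm t}\circ\rho_i$, so $\sum_{i=0}^N\rho_i^*({\rm t})\to 1$ in the $\kappa$-topology; hence $C_0(X)$ is strongly locally trivial.

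For (4)$\Rightarrow$(1): suppose $C_0(X)$ is a weakly locally trivial $G$-C*-algebra, witnessed by unital $G$-equivariant $\kappa$-continuous $*$-homomorphisms $\rho_i:C(\cC G)\to\Gamma(K_{C_0(X)})\cong C(X)$ with $\sum_{i=0}^N\rho_i({\rm t})$ converging in the compact-open topology to an invertible $x\in C(X)$, i.e.\ a nowhere-vanishing continuous function. Dualizing via \S\ref{beyondGN} (using that $X$ is completely regular Hausdorff), each $\rho_i$ arises as $f\mapsto f\circ\varphi_i$ for a unique continuous $G$-equivariant map $\varphi_i:X\to\cC G$, and $\rho_i({\rm t})={\rm t}\circ\varphi_i\geq 0$. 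Then $\sum_{i=0}^N({\rm t}\circ\varphi_i)\to x$ compactly, with $x$ strictly positive (being invertible and a limit of nonnegative functions, so $x\geq0$ everywhere and nowhere zero, hence $x>0$). Rescaling $\varphi_i$ by composing with the self-map of $\cC G$ dividing the height coordinate by $x$ — concretely set $\rho_i':X\to\cC G$, $\rho_i'=\pi\circ\bigl((({\rm t}\circ\varphi_i)/x)\times({\rm g}\circ\varphi_i)\bigr)$, continuous and $G$-equivariant by the same $\tau_M$-argument as in Theorem~\ref{numprin} — yields $\sum_{i=0}^N({\rm t}\circ\rho_i')\to 1$ compactly, so by the (2)$\Rightarrow$(1) direction of Theorem~\ref{numprin}, $X\to X/G$ is a numerable principal $G$-bundle.

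The main obstacle is the normalization step in (4)$\Rightarrow$(1): one must check that dividing the height coordinate by the strictly positive limit function $x$ produces a genuine continuous $G$-equivariant map $X\to\cC G$ landing in $[0,1]$ in the height coordinate — one should first note $0\le {\rm t}\circ\varphi_i\le x$ pointwise is not automatic, so it may be cleaner to instead renormalize by replacing $\rho_i'$ with the map whose height is $({\rm t}\circ\varphi_i)/\sum_{j}( {\rm t}\circ\varphi_j)$ only after truncating, or simply to prove (4)$\Rightarrow$(1) by observing that $\{({\rm t}\circ\varphi_i)^{-1}((0,1])\}$ is already a $G$-invariant cover of $X$ trivialized by $x\mapsto{\rm g}(\varphi_i(x))$ and numerable because $\{({\rm t}\circ\varphi_i)/x\}$ is an honest generalized partition of unity in the sense of \cite[p.~320]{t-td08}, again invoking \cite[Lemma~13.1.7]{t-td08} exactly as in the proof of Theorem~\ref{numprin}. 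Either way the topological bookkeeping around $\cC G$ — continuity into the cone via the initial topology $\tau_M$ (Proposition~\ref{topequiv}) and the fixed-point status of the tip — is the only delicate point; everything else is a transport of Theorem~\ref{numprin} along the two dualities already established.
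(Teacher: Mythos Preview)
Your proposal is correct and follows essentially the same route as the paper: dualize via \S\ref{beyondGN}, invoke Theorem~\ref{numprin}, and for (4)$\Rightarrow$(1) rescale the height coordinate by the invertible limit function. Your worry about whether $({\rm t}\circ\varphi_i)/x\le 1$ is unfounded---since each ${\rm t}\circ\varphi_i\ge 0$, the partial sums increase monotonically to $x$ pointwise, so $0\le {\rm t}\circ\varphi_i\le x$ holds automatically and the direct rescaling (which is exactly what the paper does, phrased as a family of self-maps $l_y:[(t,g)]\mapsto[(\sigma(y)^{-1}t,g)]$ of $\cC G$) works without any truncation.
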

\begin{proof}
(1) $\Rightarrow$ (2). Assume that $X\to X/G$ is a numerable principal bundle. By Theorem~\ref{numprin}, there exist $G$-equivariant continuous maps
\[
\rho_i:X\longrightarrow \cC G,\qquad i\in\bN,
\] 
such that $(\sum_{i=0}^N({\rm t}\circ\rho_i))_{N\in\bN}\subset C(X)$ converges to $1$ in the $\kappa$-topology. Since both $X$ and $\cC G$ are completely regular Hausdorff spaces, we use (\ref{funind}) to obtain unital $G$-equivariant $\kappa$-continuous $*$-homomorphisms
\[
\rho^*_i:C(\cC G)\longrightarrow C(X)\cong\Gamma(K_{C_0(X)}),\qquad i\in\bN,
\]
Since $C(X)$ is commutative and $\rho^*_i({\rm t})={\rm t}\circ\rho_i$\,, the result follows.

The implications (2) $\Rightarrow$ (3) and (3) $\Rightarrow$ (4) are immediate.

(4) $\Rightarrow$ (1). Assume that $C_0(X)$ is a weakly locally trivial $G$-C*-algebra. This means that there are unital $G$-equivariant $\kappa$-continuous $*$-homomorphisms
\[
\rho^*_i:C(\cC G)\longrightarrow\Gamma(K_{C_0(X)})\cong C(X),\qquad i\in\bN,
\]
such that $(\sum_{i=0}^N\rho^*_i({\rm t}))_N$ converges to an invertible element $\sigma\in C(X)$ in the $\kappa$-topology. Since both $X$ and $\cC G$ are completely regular Hausdorff spaces, we use~(\ref{spind}) to obtain $G$-equivariant continuous maps
\[
\rho_i:X\longrightarrow \cC G,\qquad i\in\bN,
\]
such that $(\sum_{i=0}^N({\rm t}\circ\rho_i))_{N\in\bN}$ converges to an invertible element $\sigma\in C(X)$ in the $\kappa$-topology. Since $\sum_{i=0}^N({\rm t}\circ\rho_i)$ is positive for every $N\in\bN$, then $\sigma$ is also positive (see e.g.~\cite[Proposition~8.5]{lt-76} for a~more general statement about Pedersen multiplier algebras). Furthermore, $\sigma(xg)=\sigma(x)$ for all $x\in X$ and $g\in G$. Without loss of generality, we assume that $0< \sigma(x)^{-1}\leq 1$ for all $x\in X$. Consider the following continuous $G$-equivariant maps
\[
l_x:\cC G\longrightarrow\cC G:\qquad [(t,g)]\mapsto [(\sigma(x)^{-1}t,g)],\qquad x\in X,
\]
which can be used to define
\[
\widetilde{\rho}_i:X\longrightarrow \cC G,\qquad \widetilde{\rho}_i(x):=l_x(\rho_i(x)),\qquad i\in\bN.
\]
Each $\widetilde{\rho}_i$ is clearly $G$-equivariant. Since $({\rm t}\circ\widetilde{\rho}_i)(x)=\sigma(x)^{-1}({\rm t}\circ\rho_i)(x)$ and ${\rm g}\circ\widetilde{\rho}_i={\rm g}\circ\rho_i$ (whenever it is well-defined), we conclude from Proposition~\ref{topequiv} that each $\widetilde{\rho}_i$ is continuous. Observe that $(\sum_{i=0}^N({\rm t}\circ\widetilde{\rho}_i))$ converges to $1$ in the $\kappa$-topology. From Theorem~\ref{numprin} we conclude that $X\to X/G$ is a numerable principal $G$-bundle.
\end{proof}
Since partitions of unity enable us to formulate the notion of a locally trivial $G$-C*-algebra, we do not see how to get rid of the numerability assumption. Recall, however, that only numerable principal bundles can be classified using the Milnor universal space.


\subsubsection{Principal $G$-$C_0(Y)$-algebras}\label{classlt}

In this subsection, we consider a semiclassical generalization of principal bundles to the world of noncommutative topology. The next definition is a natural one and it was probably considered by other authors, but we include it for completeness.

\begin{definition}\label{centerprin}
Let $G$ be a locally compact Hausdorff group acting on a C*-algebra $A$. We say that $A$ is a {\em principal $G$-$C_0(Y)$-algebra} if there is a $G$-equivariant $*$-homomorphism 
\[
\chi: C_0(Y)\longrightarrow ZM(A)
\]
for some locally compact Hausdorff principal $G$-bundle $Y\to Y/G$, such that $\chi(C_0(Y))A$ is norm-dense in $A$. If the bundle $Y\to Y/G$ is numerable, we say that $A$ is a {\em numerable principal $G$-$C_0(Y)$-algebra}.
\end{definition}
In the above definition, if $Y$ is second-countable, then $A$ is numerable. If $G$ is additionally second-countable and torsion-free, then Definition~\ref{centerprin} is equivalent to the definition of a proper $G$-C*-algebra of Guentner, Higson, and Trout~\cite[Definition~8.2]{ght-00}, which generalizes the notion of a~proper action in the sense of Baum, Connes, and Higson~\cite[Definition~1.3]{bch-94}. 

Let $\alpha:G\to{\rm Aut}(A)$ be a~continuous action of a~locally compact Hausdorff group on $A$. In what follows, we assume that $\widehat{A}$ is Hausdorff. The continuous action of $G$ on $\widehat{A}$ given by~(\ref{specact}) induces (via~(\ref{calgact})) an action $\beta:G\to {\rm Aut}(C_0(\widehat{A}\,))$ such that
\begin{equation}\label{equidh}
\alpha_g(f\cdot a)=\beta_g(f)\cdot\alpha_g(a),\qquad g\in G,\quad f\in C_0(\widehat{A}\,),\quad a\in A,
\end{equation}
(see e.g.~\cite[Lemma~7.1]{rw-98}). Here $f\cdot a$ is the element given by the Dauns--Hofmann theorem (see e.g.~\cite[Theorem~3]{eo-74}). Define the following map
\[
\chi:C_0(\widehat{A}\,)\longrightarrow ZM(A),\qquad \chi(f)a:=f\cdot a.
\]
It is evident that $\chi$ is a non-degenerate $*$-homomorphism. Furthermore, the equation~(\ref{equidh}) implies that it is $G$-equivariant. Therefore, if $\widehat{A}\to\widehat{A}/G$ is a (numerable) principal $G$-bundle, then $A$ is a~(numerable) principal  $G$-$C_0(\widehat{A}\,)$-algebra. Next, assume that $A$ is a (numerable) principal $G$-$C_0(Y)$-algebra with Hausdorff spectrum $\widehat{A}$, so there is a $G$-equivariant non-degenerate $*$-homomorphism $\chi:C_0(Y)\to ZM(A)\cong C_b(\widehat{A}\,)$. From the anti-equivalence~(\ref{GWor}), we obtain a continuous $G$-map $\widehat{A}\to Y$. Since $Y$ is a~(numerable) principal $G$-bundle, then so is $\widehat{A}$ (it suffices to pull back the trivializing cover on $Y$ to the spectrum~$\widehat{A}$\,). Therefore, we arrived at the following proposition.

\begin{proposition}\label{claspec}
Let $G$ be a locally compact Hausdorff group acting on a C*-algebra $A$ with Hausdorff spectrum $\widehat{A}$. Then $\widehat{A}\to\widehat{A}/G$ is a (numerable) principal $G$-bundle if and only if $A$ is a~(numerable) principal $G$-$C_0(Y)$-algebra for some locally compact Hausdorff $Y$.\hfill$\blacksquare$
\end{proposition}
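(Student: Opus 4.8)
The plan is to treat the two directions of Proposition~\ref{claspec} separately, exploiting the Dauns--Hofmann machinery already set up in the paragraphs preceding the statement, so that most of the work has in fact been done. For the ``only if'' direction, I would start from the assumption that $\widehat{A}\to\widehat{A}/G$ is a (numerable) principal $G$-bundle. Since $\widehat{A}$ is assumed Hausdorff and is always locally compact, $C_0(\widehat{A}\,)$ is a genuine commutative C*-algebra, and the continuous $G$-action on $\widehat{A}$ from~(\ref{specact}) induces via~(\ref{calgact}) an action $\beta$ on $C_0(\widehat{A}\,)$ satisfying the compatibility~(\ref{equidh}). The map $\chi:C_0(\widehat{A}\,)\to ZM(A)$, $\chi(f)a:=f\cdot a$, defined through the Dauns--Hofmann theorem, is a non-degenerate $*$-homomorphism, and~(\ref{equidh}) shows it is $G$-equivariant. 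Hence, taking $Y:=\widehat{A}$, the C*-algebra $A$ is a (numerable) principal $G$-$C_0(Y)$-algebra in the sense of Definition~\ref{centerprin}. This is exactly the content assembled in the discussion above the statement, so this direction is essentially a matter of recording it.

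For the ``if'' direction, suppose $A$ is a (numerable) principal $G$-$C_0(Y)$-algebra for some locally compact Hausdorff $Y$, i.e. there is a $G$-equivariant non-degenerate $*$-homomorphism $\chi:C_0(Y)\to ZM(A)$ with $\chi(C_0(Y))A$ norm-dense in $A$. Because $\widehat{A}$ is Hausdorff (and locally compact), the Dauns--Hofmann theorem identifies $ZM(A)$ with $C_b(\widehat{A}\,)$, the bounded continuous functions on $\widehat{A}$, and this identification is $G$-equivariant for the action induced by~(\ref{specact}). A non-degenerate $*$-homomorphism $C_0(Y)\to C_b(\widehat{A}\,)=M(C_0(\widehat{A}\,))$ is precisely a morphism of commutative C*-algebras in the sense of~\S\ref{nctop}, so the Woronowicz anti-equivalence~(\ref{GWor}) yields a continuous $G$-map $q:\widehat{A}\to Y$. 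I would then pull back the (numerable) trivializing cover of $Y$ along $q$: if $\{V_i\}$ trivializes $Y\to Y/G$ with $G$-maps $\psi_i:V_i\to G$, then $\{q^{-1}(V_i)\}$ is an open $G$-invariant cover of $\widehat{A}$ with $G$-maps $\psi_i\circ q|_{q^{-1}(V_i)}:q^{-1}(V_i)\to G$, so it trivializes $\widehat{A}\to\widehat{A}/G$; in the numerable case a partition of unity $\{h_i\}$ on $Y$ subordinate to $\{V_i\}$ pulls back to the partition of unity $\{h_i\circ q\}$ on $\widehat{A}$ subordinate to $\{q^{-1}(V_i)\}$, using that $q$ is continuous so that $\mathrm{supp}(h_i\circ q)\subseteq q^{-1}(\mathrm{supp}\,h_i)\subseteq q^{-1}(V_i)$ and that pullback of a locally finite family along a continuous map stays locally finite. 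Hence $\widehat{A}\to\widehat{A}/G$ is a (numerable) principal $G$-bundle.

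The main obstacle, and the one point deserving care, is the correct invocation of the Dauns--Hofmann identification $ZM(A)\cong C_b(\widehat{A}\,)$ together with its $G$-equivariance: this is what lets $\chi$ be read as a morphism of commutative C*-algebras and fed into~(\ref{GWor}), and it is exactly the step where Hausdorffness of $\widehat{A}$ is used. One should also note that~(\ref{GWor}) as stated is phrased for C*-algebras and $G$-morphisms, and $\chi$ lands in $C_b(\widehat{A}\,)=M(C_0(\widehat{A}\,))$, so the dual space produced is naturally $\widehat{C_0(Y)}\cong Y$ on the source side and $\widehat{C_0(\widehat{A}\,)}\cong\widehat{A}$ on the target side, giving the map $\widehat{A}\to Y$ in the direction claimed; care with this bookkeeping is all that is needed. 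Everything else --- the pullback of trivializing covers and partitions of unity --- is routine point-set topology, and since both directions reduce to material already developed, I would keep the written proof short, as the $\blacksquare$ placement after the statement suggests the author intends.
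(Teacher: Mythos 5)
Your proposal is correct and follows essentially the same route as the paper: the forward direction is exactly the Dauns--Hofmann construction of $\chi:C_0(\widehat{A}\,)\to ZM(A)$ with equivariance from~(\ref{equidh}), and the converse uses $ZM(A)\cong C_b(\widehat{A}\,)$ and the anti-equivalence~(\ref{GWor}) to produce a continuous $G$-map $\widehat{A}\to Y$ along which the trivializing cover (and partition of unity) is pulled back. The paper indeed records this argument in the discussion preceding the statement and only adds the $\blacksquare$; your extra detail on the pullback of the subordinate partition of unity is a harmless elaboration of what the paper leaves as routine.
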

As a corollary, we obtain that Definition~\ref{centerprin} indeed generalizes the notion of a~locally compact Hausdorff principal bundle.
\begin{corollary}
Let $G$ be a locally compact Hausdorff group acting on a locally compact Hausdorff space $X$. Then the following are equivalent
\begin{enumerate}
\item[{\rm (1)}] $C_0(X)$ is a (numerable) principal $G$-$C_0(Y)$-algebra for some locally compact Hausdorff $Y$.
\item[{\rm (2)}] $X\to X/G$ is a (numerable) principal $G$-bundle.\hfill$\blacksquare$
\end{enumerate}
\end{corollary}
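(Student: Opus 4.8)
The plan is to derive this from Proposition~\ref{claspec} by identifying the spectrum of $C_0(X)$ with $X$ as a $G$-space. First I would observe that $C_0(X)$ is commutative, so its irreducible $*$-representations are one-dimensional and $\widehat{C_0(X)}$ is precisely the character space of $C_0(X)$; by the classical Gelfand--Naimark theorem the evaluation map $\varepsilon\colon X\to\widehat{C_0(X)}$, $\varepsilon(x)=m_x$ with $m_x(f)=f(x)$, is a homeomorphism. In particular $\widehat{C_0(X)}$ is Hausdorff, so the standing hypothesis of Proposition~\ref{claspec} is met with $A=C_0(X)$.

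Next I would verify that $\varepsilon$ is $G$-equivariant for the induced action~(\ref{specact}) on $\widehat{C_0(X)}$ and the given right action on $X$. This is immediate: for $g\in G$, $x\in X$ and $f\in C_0(X)$, formula~(\ref{calgact}) gives $(m_x\circ\alpha_g)(f)=(\alpha_g(f))(x)=f(xg)=m_{xg}(f)$, so the action of $g$ carries $\varepsilon(x)$ to $\varepsilon(xg)$. Hence $\varepsilon$ is a $G$-homeomorphism, it descends to a homeomorphism $X/G\cong\widehat{C_0(X)}/G$ intertwining the two quotient maps, and it transports a trivializing (resp.\ numerable trivializing) cover of $X$ to one of $\widehat{C_0(X)}$ and back. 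Therefore $X\to X/G$ is a (numerable) principal $G$-bundle if and only if $\widehat{C_0(X)}\to\widehat{C_0(X)}/G$ is.

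Finally I would invoke Proposition~\ref{claspec}, which states that $\widehat{C_0(X)}\to\widehat{C_0(X)}/G$ is a (numerable) principal $G$-bundle if and only if $C_0(X)$ is a (numerable) principal $G$-$C_0(Y)$-algebra for some locally compact Hausdorff $Y$. Composing the two equivalences yields the claim. I do not anticipate any genuine obstacle here: the only point requiring a word of care is that $\varepsilon$ respects numerability, which holds because a partition of unity subordinated to an open cover transports bijectively along a homeomorphism.
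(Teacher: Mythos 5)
Your argument is correct and is exactly the intended derivation: the paper states this corollary without proof as an immediate consequence of Proposition~\ref{claspec}, applied to $A=C_0(X)$ after identifying $\widehat{C_0(X)}$ with $X$ $G$-equivariantly via the Gelfand--Naimark evaluation homeomorphism. Your verification of equivariance and of the transport of (numerable) trivializing covers along this homeomorphism fills in precisely the routine details the paper leaves implicit.
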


Next, we prove that every principal numerable $G$-$C_0(Y)$-algebra is locally trivial in the sense of Definition~\ref{fulllt}.
\begin{proposition}\label{numprinloc}
Let $G$ be a locally compact Hausdorff group and let $Y\to Y/G$ be a locally compact Hausdorff numerable principal $G$-bundle. Then every principal numerable $G$-$C_0(Y)$-algebra is a locally trivial $G$-C*-algebra.
\end{proposition}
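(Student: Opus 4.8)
The plan is to pull back the trivializing data on $Y$ to $A$ via the structure map $\chi$ and the extension result Proposition~\ref{stricnondeg}. First I would note that since $Y \to Y/G$ is a numerable principal $G$-bundle, Theorem~\ref{numprin} (applied to the locally compact Hausdorff $G$-space $Y$) provides $G$-equivariant continuous maps $\rho_i : Y \to \cC G$, $i \in \bN$, such that $(\sum_{i=0}^N ({\rm t}\circ\rho_i))_N$ converges to $1$ in the compact-open topology on $C(Y)$. Dualizing via~(\ref{funind}), and using that $C(\cC G)$ and $C(Y)$ carry the compact-open topology while $\Gamma(K_{C_0(Y)}) \cong C(Y)$ with the $\kappa$-topology agreeing with compact-open (as recalled in \S~\ref{sec2.2}), we obtain unital $G$-equivariant $\kappa$-continuous $*$-homomorphisms $\rho_i^* : C(\cC G) \to \Gamma(K_{C_0(Y)})$ with $(\sum_{i=0}^N \rho_i^*({\rm t}))_N \to 1$ in the $\kappa$-topology. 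In other words, $C_0(Y)$ is already a (strongly) locally trivial $G$-C*-algebra by Proposition~\ref{commult}; the task is to transport this through $\chi$.

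The key point is that the structure map $\chi : C_0(Y) \to ZM(A)$ is a strictly non-degenerate $G$-morphism. It is a $G$-morphism by hypothesis (non-degenerate $G$-equivariant $*$-homomorphism into $M(A)$), and since its image lies in $ZM(A)$, Proposition~\ref{pedcom} shows it is strictly non-degenerate. Therefore Proposition~\ref{stricnondeg} yields a unital $G$-equivariant $\kappa$-continuous $*$-homomorphism $X : \Gamma(K_{C_0(Y)}) \to \Gamma(K_A)$ extending $\chi$. Composing, set $\sigma_i := X \circ \rho_i^* : C(\cC G) \to \Gamma(K_A)$; each $\sigma_i$ is a unital $G$-equivariant $\kappa$-continuous $*$-homomorphism as a composition of such maps.

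It remains to check that $(\sum_{i=0}^N \sigma_i({\rm t}))_N$ converges to $1$ in the $\kappa$-topology on $\Gamma(K_A)$. Since $X$ is $\kappa$-continuous and unital, and $\sum_{i=0}^N \sigma_i({\rm t}) = X\bigl(\sum_{i=0}^N \rho_i^*({\rm t})\bigr)$, the convergence $\sum_{i=0}^N \rho_i^*({\rm t}) \to 1$ in $\Gamma(K_{C_0(Y)})$ is carried by $X$ to $\sum_{i=0}^N \sigma_i({\rm t}) \to X(1) = 1$ in $\Gamma(K_A)$. This exhibits $A$ as a locally trivial $G$-C*-algebra. The main obstacle in this argument is the verification, already packaged in Proposition~\ref{stricnondeg} and Proposition~\ref{pedcom}, that $\chi$ extends to the Pedersen multiplier level in a $\kappa$-continuous and $G$-equivariant way; once those are invoked the remainder is a formal composition-and-continuity argument. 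One should double-check that $\chi$ being valued in the \emph{center} is genuinely used (it is, via Proposition~\ref{pedcom}), since without strict non-degeneracy the extension $X$ is not available.
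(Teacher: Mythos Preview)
Your proof is correct and follows essentially the same approach as the paper: extend $\chi$ to the Pedersen multiplier level via Proposition~\ref{pedcom} and Proposition~\ref{stricnondeg}, pull the $\rho_i$'s for $C_0(Y)$ (obtained from Theorem~\ref{numprin}/Proposition~\ref{commult}) through that extension, and use $\kappa$-continuity plus unitality to conclude. The only cosmetic difference is that the paper cites Proposition~\ref{commult} directly rather than first going through Theorem~\ref{numprin}.
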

\begin{proof}
Let $A$ be a principal $G$-$C_0(Y)$-algebra. There is a $G$-equivariant $*$-homomorphism 
\[
\chi:C_0(Y)\longrightarrow ZM(A)
\] 
such that $\chi(C_0(Y))A$ is norm-dense in $A$. We infer from Proposition~\ref{pedcom} and Proposition~\ref{stricnondeg} that $\chi$ extends to a~unital $G$-equivariant $\kappa$-continuous $*$-homomorphism $\widetilde{\chi}:C(Y)\to\Gamma(K_A)$. Furthermore, since $Y\to Y/G$ is a numerable principal $G$-bundle, we know from Proposition~\ref{commult} that there exist unital $G$-equivariant $*$-homomorphisms $\rho_i:C(\cC G)\to \Gamma(K_{C_0(Y)})\cong C(Y)$, $i\in\bN$, such that $(\sum_{i=0}^N\rho_i({\rm t}))_N$ converges to $1$ in the $\kappa$-topology. Define $\widetilde{\rho}_i:=\widetilde{\chi}\circ\rho_i$. Then
\[
\widetilde{\rho}_i:C(\cC G)\longrightarrow \Gamma(K_A),\qquad i\in\bN,
\]
are unital $G$-equivariant $\kappa$-continous $*$-homomorphisms. Finally, observe that
\[
\kappa\lim_{N\to\infty}\sum_{i=0}^N\widetilde{\rho}_i({\rm t})=\kappa\lim_{N\to\infty}\sum_{i=0}^N\widetilde{\chi}(\rho_i({\rm t}))=\widetilde{\chi}\left(\kappa\lim_{N\to\infty}\sum_{i=0}^N\rho_i({\rm t})\right)=\widetilde{\chi}(1)=1.
\]
\end{proof}

Let us end this subsection by discussing a certain class of locally trivial $G$-C*-algebras that was already present in the literature. In~\cite{pr-84} and~\cite{lprs-87} a class of actions (and coactions) for which the induced action on the spectrum gives rise to a principal $G$-bundle was studied. Let $\alpha:G\to {\rm Aut}(A)$ be a continuous action of an abelian locally compact Hausdorff group on a C*-algebra with Hausdorff spectrum. The action $\alpha$ is called {\em pointwise unitary} if for each $[\pi]\in\widehat{A}$ there is a (strongly continuous) unitary representation $U$ of $G$ on the Hilbert space $H_\pi$ associated with the $*$-representation $\pi$ such that
\[
\pi(\alpha_g(a))=U_g\pi(a)U_g^*\qquad g\in G,\qquad a\in A.
\] 
If the above holds, we say that $U$ implements $\alpha$ in the $*$-representation $\pi$. The action $\alpha$ is {\em locally unitary} if for each $[\pi]\in\widehat{A}$ there is a neighborhood $N$ of $[\pi]$ and a strictly continuous map $u:G\to M(A)$ such that, for each $[\rho]\in N$, $\overline{\rho}\circ u$ implements $\alpha$ in the representation $\rho$. Here $\overline{\rho}$ denotes the extension of $\rho$ to $M(A)$.

Next, consider the action $\widehat{\alpha}:\widehat{G}\to{\rm Aut}(A\rtimes G)$ of the dual group $\widehat{G}$ on the crossed product C*-algebra $A\rtimes G$ (see e.g.~\cite[p.~176]{rw-98}). Note that since $G$ is abelian, we do not have to distinguish between the reduced and maximal crossed product. Recall that there is an action of $\widehat{G}$ on $(A\rtimes G)^{\wedge}$ (see e.g.~(\ref{specact})). 
From~\cite[Theorem~5.9]{lprs-87} it follows that if $\alpha$ is locally unitary then $(A\rtimes G)^\wedge$ is Hausdorff and the restriction map $(A\rtimes G)^\wedge\to \widehat{A}$ gives rise to a principal $\widehat{G}$-bundle. This in turn implies, by Proposition~\ref{claspec} and Proposition~\ref{numprinloc}, that $A\rtimes G$ is a locally trivial $\widehat{G}$-C*-algebra whenever $(A\rtimes G)^\wedge\to \widehat{A}$ is numerable.
\begin{remark}
Although both cited theorems from~\cite{lprs-87} are phrased in the language of coactions, our formulation is equivalent due to~\cite[Remark~5.7 (2)]{lprs-87}.
\end{remark}


\subsubsection{Actions of $U(1)$ and its cyclic subgroups}
Let $G$ be a compact Hausdorff abelian group and let $\widehat{G}$ denote its Pontryagin dual. Then a~continuous action $\alpha$ of $G$ on a C*-algebra $A$ induces a $\widehat{G}$-grading as follows
\[
A_\chi:=\{a\in A~:~\alpha_g(a)=\chi(g)a,\;g\in G\},\qquad \chi\in\widehat{G}.
\]

\begin{proposition}\label{u1act}
Let $U(1)$ act on a C*-algebra $A$ and let $\gamma$ denote the generator of $\mathbb{Z}=\widehat{U(1)}$. If there exist normal elements $a_i\in A_\gamma$, $i\in\bN$, 
such that $(\sum_{i=0}^Na^*_ia_i)_{N\in\bN}$ is an approximate unit, then $A$ is a~locally trivial $U(1)$-C*-algebra.
\end{proposition}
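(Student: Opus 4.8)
The plan is to build the required $*$-homomorphisms $\rho_i:C(\cC G)\to\Gamma(K_A)$ out of the normal elements $a_i\in A_\gamma$ by exploiting the fact that $U(1)$ is compact and that $\cC(U(1))$ is the cone on the circle, which is just the closed unit disk $\overline{\bD}$ with its standard $U(1)$-action by rotation. Under the identification $\cC(U(1))\cong\overline{\bD}$, the height function $\mathrm t$ corresponds to the modulus $z\mapsto |z|$, and $C(\overline{\bD})$ is the universal unital C*-algebra generated by a single normal element $z$ of norm $\leq 1$ on which $U(1)$ acts with $z$ of degree $\gamma$. So a unital $G$-equivariant $*$-homomorphism $\rho:C(\cC G)\to\Gamma(K_A)$ is the same as a choice of normal multiplier $x=\rho(z)\in\Gamma(K_A)$ of norm $\leq 1$ (if $x\in A$, equivalently $x^*x\leq 1$) with $\Gamma(\alpha)_g(x)=g\cdot x$, i.e. $x$ of degree $\gamma$; and then $\rho(\mathrm t)=(x^*x)^{1/2}$. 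The subtlety is that the $a_i$ need not be contractions, so the first step is to renormalize.

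First I would set $b:=\sum_{i=0}^\infty a_i^*a_i$, interpreted as a (possibly unbounded) positive element of $\Gamma(K_A)$: by hypothesis the partial sums $b_N:=\sum_{i=0}^N a_i^*a_i$ form an approximate unit, hence $b_N\to 1$ in the $\kappa$-topology (an approximate unit always converges to $1$ $\kappa$-strictly, as used in the proof of Proposition~\ref{onewayfunct}). Since each $a_i^*a_i\in A_0=A^{U(1)}$ and the $b_N$ are positive and increasing, $b$ is a positive, central-enough, $G$-invariant element of $\Gamma(K_A)$ with dense range; I want to form $b^{-1/2}$ and set $x_i:=a_i b^{-1/2}$ so that $\sum_i x_i^*x_i = b^{-1/2} b\, b^{-1/2}=1$. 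The technical point here is making sense of $b^{-1/2}$ as a multiplier of $K_A$: since $b_N\nearrow 1$ $\kappa$-strictly, $b$ is invertible in $\Gamma(K_A)$ with $\kappa\lim b_N=1$, so functional calculus for the commuting approximate unit (or directly, $b^{-1/2}:=\kappa\lim (b_N+\tfrac1n)^{-1/2}$ along a suitable net) produces the desired positive multiplier; one checks $x_i^*x_i = b^{-1/2}a_i^*a_i b^{-1/2}$ and $\sum_{i=0}^N x_i^*x_i = b^{-1/2}b_N b^{-1/2}\to 1$ in $\kappa$. Each $x_i$ is still normal — this uses that $a_i$ is normal and $a_i^*a_i=a_ia_i^*$ commutes with $b$ (each summand of $b$ lies in $A_0$, and $a_i^*a_i$ commutes with $a_j^*a_j$? this needs checking; more robustly, $a_ia_i^*=a_i^*a_i\le b$ so $a_i^*a_i$ commutes with $b$ by the double commutant/functional-calculus argument, giving $x_i$ normal) — and $x_i\in A_\gamma$ since $b^{-1/2}\in(\Gamma(K_A))^{U(1)}$.

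Then I would define $\rho_i:C(\cC G)\to\Gamma(K_A)$ to be the unital $\kappa$-continuous $*$-homomorphism determined by $\rho_i(z)=x_i$, where $z\in C(\overline{\bD})\cong C(\cC(U(1)))$ is the coordinate function; this is well-defined because $x_i$ is a normal multiplier with $\|x_i\|\le 1$, and its $\kappa$-continuity follows since on the commutative C*-algebra $C(\overline{\bD})$ the source carries its norm topology and $\Gamma(K_A)$-valued $*$-homomorphisms out of a commutative C*-algebra are automatically $\kappa$-continuous (the image sits in the C*-subalgebra of $\Gamma(K_A)$ generated by $x_i$ together with the relevant part of $M(A)$, where norm and $\kappa$ agree on bounded sets by Lemma~\ref{diftop}). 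The $G$-equivariance is exactly the statement $\Gamma(\alpha)_g(x_i)=e^{i\theta}x_i$ when $g=e^{i\theta}$, since $U(1)$ acts on $C(\overline\bD)$ by rotating $z$. Finally $\rho_i(\mathrm t)=\rho_i((z^*z)^{1/2})=(x_i^*x_i)^{1/2}$; but actually we want $\sum_i\rho_i(\mathrm t)=\sum_i (x_i^*x_i)^{1/2}$ to converge to $1$, not $\sum_i x_i^*x_i$. The cleanest fix is to choose the identification so that $\mathrm t$ corresponds to $|z|^2$ rather than $|z|$ — equivalently to reparametrize the cone coordinate — or, better, to replace $a_i$ at the outset by elements whose "square" behaves correctly: use the cone map $\cC(U(1))\to\cC(U(1))$, $[(t,g)]\mapsto[(t^2,g)]$ to precompose, which rescales $\mathrm t\mapsto \mathrm t^2$ and lets us arrange $\rho_i(\mathrm t)=x_i^*x_i$. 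With that normalization, $\sum_{i=0}^N\rho_i(\mathrm t)=\sum_{i=0}^N x_i^*x_i\to 1$ in the $\kappa$-topology, as shown above, completing the proof.

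The main obstacle I anticipate is the functional-calculus bookkeeping in $\Gamma(K_A)$: constructing $b^{-1/2}$ rigorously as a multiplier of the Pedersen ideal and verifying that $x_i=a_ib^{-1/2}$ is again normal and again homogeneous of degree $\gamma$, together with the convergence $\sum x_i^*x_i\to 1$ in $\kappa$. Once one is comfortable that $b$ (a $\kappa$-limit of an increasing approximate unit) is a well-behaved positive invertible element of $\Gamma(K_A)$ to which bounded continuous functional calculus applies, the rest is the routine translation between normal contractions and $*$-homomorphisms out of $C(\overline{\bD})$.
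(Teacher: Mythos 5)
Your proposal is correct and follows essentially the same route as the paper: identify $C(\cC U(1))$ with the universal unital C*-algebra generated by a normal contraction of degree $\gamma$, send that generator to $a_i$ so that (after your $t\mapsto t^2$ reparametrization, which is exactly the paper's choice of generator $\sqrt{{\rm t}}\,u$) one gets $\rho_i({\rm t})=a_i^*a_i$, and then invoke the approximate-unit hypothesis for the $\kappa$-convergence of $\sum_{i=0}^N\rho_i({\rm t})$ to $1$. The only superfluous part is the renormalization by $b^{-1/2}$: since the partial sums $b_N$ already converge to $1$ in the $\kappa$-topology, your $b$ is simply $1$ (and $a_i^*a_i\le b_N\le 1$ already forces $\|a_i\|\le 1$), so $x_i=a_i$ and the functional-calculus issues you flag as the main obstacle do not arise.
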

\begin{proof}
Assume that there are normal elements $a_i\in A$, $i\in\bN$, satisfying the assumptions of the proposition. Let $u$ be the unitary generator of the C*-algebra $C(U(1))$ and consider the function $\sqrt{\rm t}u\in C(\cC U(1))$ defined by $\sqrt{\rm t}u([(t,z)]):=\sqrt{t}z$.
It is straightforward to show that the assignments
\[
C(\cC U(1))\ni \sqrt{\rm t}u\longmapsto a_i\in A,\qquad i\in\bN,
\]
give rise to unital $*$-homomorphisms
\[
\rho_i:C(\cC U(1))\longrightarrow M(A)\subseteq\Gamma(K_A),\qquad i\in\bN.
\]
The $U(1)$-equivariance of a given $\rho_i$ follows from the fact that $a_i\in A_\gamma$.
Finally, observe that
\[
\left(\sum_{i=0}^N\rho_i({\rm t})\right)_{N\in\bN}=\left(\sum_{i=0}^N\rho_i\left(\left(\sqrt{\rm t}u^*\right)\left(\sqrt{\rm t}u\right)\right)\right)_{N\in\bN}=\left(\sum_{i=0}^Na^*_ia_i\right)_{N\in\bN}.
\]
Since $(\sum_{i=0}^Na^*_ia_i)_{N\in\bN}$ is an approximate unit, it converges to 1 in the $\kappa$-topology.
\end{proof}

Let $n\in\bN$ and let $\mathbb{Z}/n\mathbb{Z}$ be a cyclic group of order $n$. Let
\[
{\rm Star}_n:=\{z\in\bC~:~z^n\in[0,1]\}	
\]
be the star-convex set with center $0$ and generated by the $n$th roots of unity. The proof of the following proposition is analogous to the proof of Proposition~\ref{u1act}.

\begin{proposition}\label{znact}
Let $\mathbb{Z}/n\mathbb{Z}$ act on a C*-algebra $A$ and let $\gamma$ denote the generator of $\mathbb{Z}/n\mathbb{Z}=\widehat{\mathbb{Z}/n\mathbb{Z}}$. If there exist normal elements $a_i\in A_\gamma$, $i\in\bN$, such that the spectrum of each $a_i$ is contained in ${\rm Star}_n$ and 
such that $(\sum_{i=0}^Na^*_ia_i)_{N\in\bN}$ is an approximate unit, then $A$ is a~locally trivial $\mathbb{Z}/n\mathbb{Z}$-C*-algebra.
\end{proposition}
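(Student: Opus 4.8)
The plan is to mimic the proof of Proposition~\ref{u1act}, replacing the role of the generator $u$ of $C(U(1))$ by the coordinate function on the $n$th roots of unity inside $\bC$. Since $\mathbb{Z}/n\mathbb{Z}\subseteq U(1)$ can be realized as the group $\mu_n$ of $n$th roots of unity with its standard coordinate $z$, the C*-algebra $C(\mathbb{Z}/n\mathbb{Z})$ is generated by a single unitary $u$ satisfying $u^n=1$, and the unreduced cone $\cC(\mathbb{Z}/n\mathbb{Z})$ is homeomorphic (as a $\mathbb{Z}/n\mathbb{Z}$-space, with the rotation action) to ${\rm Star}_n$ via $[(t,z)]\mapsto t^{1/n}z$. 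Under this identification the height function ${\rm t}$ on $\cC(\mathbb{Z}/n\mathbb{Z})$ corresponds to the function $w\mapsto |w|^n$ on ${\rm Star}_n$, while the function $\sqrt[n]{\rm t}\,u$ corresponds to the coordinate function $w\mapsto w$ itself.

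First I would fix, for each $i\in\bN$, the normal element $a_i\in A_\gamma$ whose spectrum lies in ${\rm Star}_n$, and invoke the continuous functional calculus: since $a_i$ is normal with ${\rm sp}(a_i)\subseteq{\rm Star}_n$, the assignment $z\mapsto a_i$ (the coordinate function on ${\rm Star}_n$) extends to a $*$-homomorphism $C_0({\rm Star}_n\setminus\{0\})\to A$, which composes with the unitalization to give a unital $*$-homomorphism $\rho_i:C({\rm Star}_n)\cong C(\cC(\mathbb{Z}/n\mathbb{Z}))\to M(A)\subseteq\Gamma(K_A)$. The spectrum condition ${\rm sp}(a_i)\subseteq{\rm Star}_n$ is precisely what is needed here for the functional calculus to land inside ${\rm Star}_n$-valued functions and hence to define $\rho_i$ on all of $C(\cC(\mathbb{Z}/n\mathbb{Z}))$. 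Next I would check $\mathbb{Z}/n\mathbb{Z}$-equivariance of $\rho_i$: the $\mathbb{Z}/n\mathbb{Z}$-action on $C(\cC(\mathbb{Z}/n\mathbb{Z}))$ sends the coordinate function $w$ to $\bar\gamma(g)w$, and the induced action on $\Gamma(K_A)$ sends $a_i\in A_\gamma$ to $\gamma(g)a_i$; since $\gamma$ is a character into $U(1)$ these scalars match, so $\rho_i$ intertwines the two actions.

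Finally I would compute the relevant sum. Under the identification above, ${\rm t}=(\sqrt[n]{\rm t}\,u)^*{}^{\cdots}$ — more precisely ${\rm t}$ corresponds to $\bar w\,w\cdot(\text{lower-order})$; being careful, ${\rm t}$ as a function on ${\rm Star}_n$ is $w\mapsto|w|^n$, whereas $\bar w w = |w|^2$, so for $n\ne 2$ one should instead note directly that the image $\rho_i({\rm t})$ equals $(a_i^*a_i)^{n/2}$ only when $n=2$; in general one simply observes that ${\rm t}$ pulls back along the functional calculus to the element $f_n(a_i)$ where $f_n(w)=|w|^n$, and checks that this coincides with $a_i^*a_i$ exactly when $n=2$. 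To keep the statement uniform one uses that for a normal element with spectrum in ${\rm Star}_n$ one has $a_i^*a_i = g(a_i)$ for $g(w)=|w|^2$, and the hypothesis is stated so that $\sum_{i=0}^N a_i^*a_i$ is an approximate unit; hence it suffices to arrange $\rho_i({\rm t})=a_i^*a_i$, which holds since ${\rm t}([(t,z)])=t=|{\rm coord}|^2$ under the normalization $[(t,z)]\mapsto t^{1/2}z$ of the cone embedding (this is the natural embedding for the join, matching the $U(1)$ case). Thus $\bigl(\sum_{i=0}^N\rho_i({\rm t})\bigr)_{N\in\bN}=\bigl(\sum_{i=0}^N a_i^*a_i\bigr)_{N\in\bN}$, which is an approximate unit and therefore converges to $1$ in the $\kappa$-topology (an approximate unit converges to $1$ strictly on norm-bounded sets, and $\kappa$ agrees with $\beta$ there by Lemma~\ref{diftop}). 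The main obstacle is bookkeeping the precise normalization of the cone embedding ${\rm Star}_n\hookrightarrow\bC$ and of the height function so that the spectrum hypothesis ${\rm sp}(a_i)\subseteq{\rm Star}_n$ genuinely guarantees $\rho_i$ is defined on all of $C(\cC(\mathbb{Z}/n\mathbb{Z}))$ while still yielding $\rho_i({\rm t})=a_i^*a_i$; once the conventions are pinned down the rest is a direct transcription of the proof of Proposition~\ref{u1act}.
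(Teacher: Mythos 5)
Your proposal is correct and takes essentially the same route as the paper, whose proof consists of the single remark that the argument is analogous to Proposition~\ref{u1act}: one identifies $\cC(\mathbb{Z}/n\mathbb{Z})$ with ${\rm Star}_n$ via $[(t,\zeta)]\mapsto\sqrt{t}\,\zeta$, sends the coordinate function (the analogue of $\sqrt{\rm t}\,u$) to $a_i$ by functional calculus --- which is exactly where the hypothesis ${\rm sp}(a_i)\subseteq{\rm Star}_n$ is used --- and observes that ${\rm t}=\bar{w}w$ gives $\rho_i({\rm t})=a_i^*a_i$. Your intermediate detour through the normalization $[(t,\zeta)]\mapsto t^{1/n}\zeta$ (under which ${\rm t}$ would correspond to $|w|^n$) is harmless since you correctly settle on the $\sqrt{t}$-normalization at the end, and the equivariance and $\kappa$-convergence steps are as in the $U(1)$ case.
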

Note that for $n=2$ the above elements $a_i$ are odd and self-adjoint and $(\sum_{i=0}^Na_i^2)_{N\in\bN}$ is an approximate unit. 


\subsubsection{$\mathbb{Z}/2\mathbb{Z}$-actions on non-unital graph C*-algebras}

Recall that a simple C*-algebra cannot be a principal $G$-$C_0(Y)$-algebra. However, we know examples of simple unital locally trivial $G$-C*-algebras, namely one can show that the Cuntz algebra $\mathcal{O}_n$ is a locally trivial $\bZ/2\bZ$-C*-algebra (see e.g.~\cite[Subsection~5.1]{cpt-21}). As far as the author knows, there are no examples of this type for non-unital C*-algebras in the literature. Let us use our new methods to find such an example. To this end, we need to discuss certain actions on graph C*-algebras. We refer the reader to~\cite{bprs00} for a~detailed treatment of the theory.

A~{\em directed graph} is a~quadruple $(E^0,E^1,s,r)$, where $E^0$ is the set of vertices, $E^1$ is the set of edges, and $s,r:E^1\to E^0$ are the source and range maps respectively. We assume that both $E^0$ and $E^1$ are countable. A~graph is called {\em row-finite} if for every $v\in E^0$ we have that $|s^{-1}(v)|<\infty$. By a \emph{path} $e$ of length $|e|=k\geq 1$ we mean a directed path, i.e.\ a sequence of edges $\gamma:=e_1\ldots e_k$, with $r(e_i)=s(e_{i+1})$ for all $i=1,\ldots,k-1$. By convention, vertices are paths of length $0$. Let us denote the set of finite paths by $FP(E)$. One can extend the range and source maps to $FP(E)$ by putting $r(\mu):=r(e_n)$ and $s(\mu):=s(e_1)$ for $\mu=e_1\ldots e_n$ and $r(v)=s(v)=v$ for vertices.

The {\em  \mbox{C*-algebra} $C^*(E)$ of a row-finite graph $E$} is the universal \mbox{C*-algebra} generated by mutually 
orthogonal projections \mbox{$\big\{P_v\,|\,v\in E_0\big\}$} and partial isometries $\big\{S_e\,|\,e\in E_1\big\}$ satisfying the \emph{Cuntz--Krieger relations}:
\begin{align}
S_e^*S_e &=P_{r(e)} && \text{for all }e\in E_1\text{, and}\tag{\text{CK1}} \label{eq:CK1} \\
\sum_{e\in s^{-1}(v)}\!\! S_eS_e^*&=P_v && \text{for all }v\in E_0\text{ with $s^{-1}(v)\neq \emptyset$.}\tag{CK2} \label{eq:CK2}
\end{align}
For a path $\mu=e_1\ldots e_n$, we write $S_\mu:=S_{e_1}\ldots S_{e_n}$. Let us recall some results on graph C*-algebras that will be needed in our examples:
\begin{enumerate}
\item[(i)] In every graph C*-algebra of a row-finite graph we have that $P_{s(e)}S_e=S_e=S_eP_{r(e)}$ and $S^*_fS_e=0$ if and only if $f\neq e$. 
\item[(ii)] The graph C*-algebra $C^*(E)$ is unital if and only if $|E^0|<\infty$. 
\item[(iii)] We have that $C^*(E)\cong\overline{\rm span}\{S_\mu S_\nu^*~:~\mu,\nu\in FP(E),~r(\mu)=r(\nu)\}$.
\item[(iv)] It is well known that if $X\subseteq E^0$, then $\sum_{v\in X}P_v$ converges strictly in the multiplier algebra $M(C^*(E))$ to a projection $P_X$~\cite[Lemma~1.1]{bprs00}. If $X=E^0$, then $\sum_{v\in E^0}P_v$ converges strictly to the unit in $M(C^*(E))$.
\item[(v)] Every graph C*-algebra comes equipped with the {\em gauge $U(1)$-action} $\alpha$ defined on the generators as follows
\[
\alpha_z(P_v):=P_v,\qquad\alpha_z(S_e)=zS_e\,,\qquad z\in U(1),\quad v\in E^0,\quad e\in E^1. 
\]
\end{enumerate}

Here we are only interested in the restriction of the gauge action to $\mathbb{Z}/2\mathbb{Z}$. To find locally trivial $\bZ/2\bZ$-C*-algebras among graph C*-algebras, one can use Proposition~\ref{znact} and the fact that the generalized sum of vertex projections strictly converges to $1$. Next, we present the aforementioned example of a simple non-unital locally trivial $\mathbb{Z}/2\mathbb{Z}$-C*-algebra. 
\begin{example}
The C*-algebra of compact operators $\cK(H)$ on a seprarable Hilbert space $H$ can be realized as the graph C*-algebra (see Figure 1). Recall that $\cK(H)$ is a simple C*-algebra, so it cannot be a principal $\mathbb{Z}/2\bZ$-$C_0(Y)$-algebra. However, we will show that it is a locally trivial $\bZ/2\bZ$-C*-algebra. This is a~purely noncommutative phenomenon. \begin{figure}[h]
\centering
\begin{tikzpicture}
\tikzstyle{vertex}=[circle,fill=black,minimum size=6pt,inner sep=0pt]
\tikzstyle{edge}=[draw,->]
\tikzstyle{loop}=[draw,->,min distance=20mm,in=130,out=50,looseness=50]
\node[vertex,label=below:$v_0$] (a) at  (0,0) {};
\node[vertex,label=below:$v_1$] (b) at (2, 0) {};
\node[vertex,label=below:$v_2$] (c) at (4, 0) {};
\node[vertex,label=below:$v_3$] (d) at (6, 0) {};
\node (e) at (6.5,0) {~~~~~~~~~~.\;.\;.\;.\;.\;.};
\path (a) edge[edge] node[above] {$e_{0}$} (b);
\path (b) edge[edge] node[above] {$e_{1}$} (c);
\path (c) edge[edge] node[above] {$e_{2}$} (d);
\end{tikzpicture}
\caption{Graph representation of $\cK$.}
\end{figure}

Consider the restriction of the gauge action on $\cK(H)$ to $\mathbb{Z}/2\mathbb{Z}$. For brevity, we use the following notation $P_n:=P_{v_n}$ and $S_n:=S_{e_n}$ for all $n\in\bN$. Consider the following odd self-adjoint elements
\[
x_{n}:=S_{2n}+S_{2n}^*\,, \qquad n\in\bN.
\]
Observe that
\[
\sum_{n=0}^{N}x_n^2=\sum_{n=0}^N(P_{2n}+P_{2n+1})=\sum_{n=0}^{2N+1} P_n\,.
\]
Therefore, $(\sum_{n=0}^{N}x_n^2)_{N\in\bN}$ is an approximate unit in $\cK$. By Proposition~\ref{znact}, we obtain that $\cK$ is a~locally trivial $\bZ/2\bZ$-C*-algebra.
\end{example}

\begin{example}
\noindent We define a graph $E_\infty$ as follows (see Figure~2): 
\[
E^0_\infty:=\{v_n~:~n\in\bN\},\qquad E^1_\infty:=\{e_n\,,e_{n,n+1}~:~n\in\bN\}, 
\]
where $s(e_n)=r(e_n)=v_n$, $s(e_{n,n+1})=v_n$, and $r(e_{n,n+1})=v_{n+1}$ for all $n\in\bN$.

\begin{figure}[h!]
\centering
\begin{tikzpicture}\label{infsphere}
\tikzstyle{vertex}=[circle,fill=black,minimum size=6pt,inner sep=0pt]
\tikzstyle{edge}=[draw,->]
\tikzstyle{loop}=[draw,->,min distance=20mm,in=130,out=50,looseness=50]
 vertices
\node[vertex,label=below:$v_1$] (a) at  (0,0) {};
\node[vertex,label=below:$v_2$] (b) at (2, 0) {};
\node[vertex,label=below:$v_3$] (c) at (4, 0) {};
\node[vertex,label=below:$v_4$] (d) at (6, 0) {};
\node (e) at (6.5,0) {~~~~~~~~~~.\;.\;.};
edges
\path (a) edge [edge, in = 45, out = 135, looseness = 30] node[above] {$e_{0}$} (a);
\path (a) edge[edge] node[above] {$e_{01}$} (b);
\path (b) edge [edge, in = 45, out = 135, looseness = 30] node[above] {$e_{1}$} (b);
\path (b) edge[edge] node[above] {$e_{12}$} (c);
\path (c) edge [edge, in = 45, out = 135, looseness = 30] node[above] {$e_{2}$} (c);
\path (c) edge[edge] node[above] {$e_{23}$} (d);
\path (d) edge [edge, in = 45, out = 135, looseness = 30] node[above] {$e_{3}$} (d);
\end{tikzpicture}
\caption{The graph $E_\infty$.}
\end{figure}
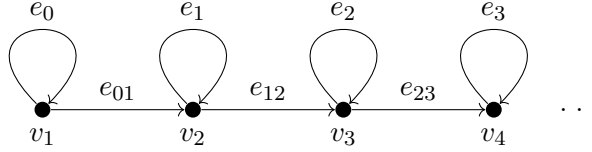
Consider the restriction of the gauge action $\alpha$ on $C^*(E_\infty)$ to the subgroup $\mathbb{Z}/2\mathbb{Z}$. The graph C*-algebra $C^*(E)$ is non-unital, so we need to use our new methods to establish whether $C^*(E_\infty)$ is a locally trivial $\mathbb{Z}/2\mathbb{Z}$-C*-algebra. By Proposition~\ref{znact}, we need to find odd self-adjoint elements $x_i$, $i\in\bN$, such that $(\sum_{i=0}^N x^2_i)_{N\in\bN}$ converges strictly to the unit in $M(C^*(E_\infty))$. We put
\[
\chi_n:=\frac{1}{3}\left(1+\frac{(-1)^n}{2^{n+1}}\right),\qquad n\in\mathbb{N}.
\]
Observe that $\chi_0=\frac{1}{2}$, $2\chi_n+\chi_{n-1}=1$ for $n\in\mathbb{N}$, and $\lim_{n\to\infty}\chi_n=\frac{1}{3}$. Consider the following elements
\[
y_n:=\sqrt{2\chi_n}(S_n+S_{n,n+1}),
\]
where $S_n:=S_{e_n}$ and $S_{n,n+1}:=S_{e_{n,n+1}}$. We claim that the real and imaginary parts of the elements $y_n$ give us the needed odd self-adjoint elements, i.e. let
\[
x_{2n}:=\frac{y_n+y_n^*}{2},\qquad x_{2n+1}:=\frac{y_n-y_n^*}{2i},\qquad n\in\bN.
\]
Note that
\[
\sum_{n=0}^{2N+1}x^2_n=\sum_{n=0}^N\left(x_{2n}^2+x_{2n+1}^2\right)=\sum_{n=0}^N\chi_n(2 P_n+P_{n+1})=\sum_{n=0}^NP_n+\chi_NP_{N+1},
\]
where $P_n:=P_{v_n}$. Note that for any finite path $\mu$ in $E_\infty$, we can find $N\in\mathbb{N}$ such that for every $n>N$ we have that $P_nS_\mu=0$. Since every $a\in C^*(E_\infty)$ can be approximated by a linear combination of the elements of the form $S_\mu S_\nu^*$, where $\mu,\nu$ are finite paths in $E_\infty$ of the same range, we conclude that $\lim_{N\to\infty}P_Na=0$ for all $a\in C^*(E_\infty)$. Therefore, for any $a\in C^*(E_\infty)$, we have that
\begin{align*}
\left\|\sum_{n=0}^{2N+1}x_n^2a-a\right\|&=\left\|\sum_{n=0}^{2N+1}x_n^2a-\sum_{n=0}^NP_na+\sum_{n=0}^NP_na-a\right\|\\
&\leq \|\chi_NP_{N+1}a\|+\left\|\sum_{n=0}^NP_na-a\right\|\longrightarrow 0.
\end{align*}
This implies that $(\sum_{n=1}^{2N}x^2_n)_{N\in\bN}$ converges strictly to the unit in $M(C^*(E_\infty))$. A similar calculation shows that $(\sum_{n=1}^{2N+1}x^2_n)_{N\in\bN}$ is also an approximate unit, which implies that $C^*(E_\infty)$ is a locally trivial $\mathbb{Z}/2\mathbb{Z}$-C*-algebra.
\end{example}


\section{Relation to free actions}\label{setthemfree}

If $X\to X/G$ is a principal $G$-bundle, then the action of $G$ on $X$ is automatically free. In this subsection we show that in the noncommutative case local triviality also implies a version of freeness of the action. There are many generalizations of freeness to the setting of actions of locally compact Hausdorff groups on C*-algebras (see e.g.~\cite{nc-p09} for the case of finite group actions).


\subsection{$\kappa$-freeness and the Ellwood condition}

We begin with a characterization of injective maps between locally compact Hausdorff spaces in terms of their $*$-algebras of all continuous functions. The proof we present here is a modification of the proof of~\cite[Lemma~2.7]{da-e00} (see also~\cite{bhms-07}) and heavily relies on the Tietze extension theorem for locally compact Hausdorff spaces, which states that every continuous function on a~non-empty compact subset of a locally compact Hausdorff space can be extended uniquely to a~compactly supported continuous function on the whole space with the same sup-norm. 
\begin{lemma}\label{kapinj}
Let $Y$ and $Z$ be locally compact Hausdorff spaces and let $f:Y\to Z$ be a~continuous map. Then $f$ is injective if and only if the induced unital $\kappa$-continuous $*$-homomorphism \mbox{$f^*:C(Z)\to C(Y)$} (see~(\ref{funind})) has $\kappa$-dense range.
\end{lemma}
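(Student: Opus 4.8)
The plan is to prove the two implications separately, using the fact that a continuous function on a compact subset of a locally compact Hausdorff space extends to a compactly supported continuous function on the whole space with the same sup-norm (Tietze for locally compact Hausdorff spaces). Throughout I write $K_A \cong C_c$ only informally; the working objects are $C(Y)$ and $C(Z)$ with the compact-open topology, and $f^* : C(Z) \to C(Y)$, $f^*(h) = h \circ f$.

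First I would treat the easy direction: $f$ injective $\Rightarrow$ $f^*$ has $\kappa$-dense range. Fix $g \in C(Y)$, a compact set $L \subseteq Y$, and $\varepsilon > 0$; I must produce $h \in C(Z)$ with $\sup_{y \in L}|g(y) - h(f(y))| < \varepsilon$. Since $f$ is injective and continuous, its restriction $f|_L : L \to f(L)$ is a continuous bijection from a compact space to a Hausdorff space, hence a homeomorphism; $f(L)$ is compact in $Z$. Then $g \circ (f|_L)^{-1}$ is a well-defined continuous function on the compact set $f(L) \subseteq Z$, and by the Tietze extension theorem it extends to some $h \in C_c(Z) \subseteq C(Z)$. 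For $y \in L$ we get $h(f(y)) = g((f|_L)^{-1}(f(y))) = g(y)$, so in fact $p_L(g - f^*(h)) = 0$, which is more than enough for $\kappa$-density.

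For the converse, I would argue by contraposition: suppose $f$ is not injective, say $f(y_1) = f(y_2) =: z_0$ with $y_1 \neq y_2$. Since $Y$ is locally compact Hausdorff (hence $T_{3\frac12}$ or at least supports Urysohn-type separation on compacta), I can choose a compact set $L \subseteq Y$ containing both $y_1$ and $y_2$ and a function $g \in C(Y)$ with $g(y_1) = 0$ and $g(y_2) = 1$ — for instance a compactly supported Urysohn function. Now for any $h \in C(Z)$ we have $f^*(h)(y_1) = h(z_0) = f^*(h)(y_2)$, so $|g(y_1) - f^*(h)(y_1)| + |g(y_2) - f^*(h)(y_2)| \geq |g(y_1) - g(y_2)| = 1$, forcing $p_L(g - f^*(h)) \geq \tfrac12$. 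Hence $g$ cannot be approximated by elements of $\operatorname{ran}(f^*)$ in the compact-open topology, so the range is not $\kappa$-dense.

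The only genuinely delicate point is ensuring, in the converse direction, that the two points $y_1, y_2$ lie in a common compact set on which a separating continuous function of the required form exists — this is where local compactness and the Hausdorff property are used, via the Tietze/Urysohn extension theorem quoted just before the lemma; everything else is bookkeeping with the defining seminorms $p_K$. I would also remark that the same argument shows the stronger statement that $g$ itself lies in the $\kappa$-closure of the range exactly when its values are constant along the fibers of $f$ over each compact set, but only the stated equivalence is needed here.
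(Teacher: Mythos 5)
Your proof is correct and follows essentially the same route as the paper's: both directions hinge on the Tietze extension theorem, with the forward implication using that $f$ restricted to a compact set is a homeomorphism onto its image (so the pulled-back function extends to $Z$ and agrees with the target exactly on that compact set, giving $p_L = 0$), and the converse producing a function separating two points in a common fiber, which no element of the range can approximate on the compact set $\{y_1,y_2\}$. Your version of the forward direction is slightly more streamlined than the paper's (which approximates elements of $C_b(Y)$ via a net indexed by compact sets with cutoff functions), but the underlying mechanism is identical.
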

\begin{proof}
($\Rightarrow$) Assume that $f:Y\to Z$ is injective. Since $C_b(Y)$ is $\kappa$-dense in $C(Y)$, it suffices to show that any element in $C_b(Y)$ can be approximated by the elements in the range of $f^*$. Let $\omega\in C_b(Y)$ and let $\emptyset\neq K\subset Y$ be compact. From the Tietze extension theorem it follows that there is $c_K\in C_c(Y)$ such that $c_K(K)\subseteq \{1\}$ and $\|c_K\|=1$. Take $\omega_K:=\omega\, c_K\in C(K)$. By the injectivity of $f$, for any compact subset $K\subset Y$, there is a $*$-isomorphism of C*-algebras $\Psi_K: C(f(K))\to C(K)$. Again using the Tietze extension theorem, the function $\Psi_K^{-1}(\omega_K)\in C(f(K))$ can be extended to the~function $\eta_K\in C_b(Z)$ such that $\|\Psi_K^{-1}(\omega_K)\|=\|\eta_K\|$. Observe that
\begin{align*}
\|f^*(\eta_K)-\omega\|&=\sup_{y\in Y}|f^*(\eta_K)(y)-\omega(y)|=\sup_{y\in Y}|\eta_K(f(y))-\omega(y)|\\
&\leq\sup_{z\in Z}|\eta_K(z)|+\sup_{y\in Y}|\omega(y)|\leq \|\eta_K\|+\|\omega\|\\
&\leq \|\omega_K\|+\|\omega\|\leq \|\omega\|\|c_K\|+\|\omega\|=2\|\omega\|.
\end{align*}
Finally, for any $k\in C_c(Y)$ with ${\rm supp}\,k=L$, we have that
\begin{align*}
p_L(f^*(\eta_K)-\omega)&=\sup_{y\in Y}|(\eta_K(f(y))-\omega(y))k(y)|=\sup_{y\notin K}|\eta_K(f(y))-\omega(y)||k(y)|\\
&\leq \|f^*(\eta_K)-\omega\|\sup_{y\notin K}|k(y)|\leq 2\|\omega\|\sup_{y\notin K}|k(y)|.
\end{align*}
Hence for any $K$ such that $L\subseteq K$, we have that $p_L(f^*(\eta_K)-\omega)=0$. This implies that $(f^*(\eta_K))_{\emptyset\neq K\subset Y}$ converges to $\omega$ in the $\kappa$-topology.

($\Leftarrow$) Assume that $f$ is not injective. Then there are points $y_1\neq y_2$ such that $f(y_1)=f(y_2)$. Since $\{y_1\,,y_2\}$ is a compact subspace, then by the Tietze extension theorem there is a function $\omega\in C_c(Y)$ such that $\omega(y_1)=\omega(y_2)=1$. Continuous functions on a locally compact Hausdorff space $Y$ separate points, hence, there is also a function $\psi\in C(Y)$ such that $\psi(y_1)\neq\psi(y_2)$. If the range of $f^*$ would be $\kappa$-dense in $C(Y)$, then there would exist a net $(\eta_\lambda)_{\lambda\in\Lambda}$ in $f^*(C(Z))$ such that
\[
\sup_{y\in K}|\eta_\lambda(y)-\psi(y)\omega(y)|\longrightarrow 0,
\]
for all compact subsets $K\subset Y$. However, for $K=\{y_1,y_2\}$ we would have that
\begin{align*}
\max&\{|\eta_\lambda(y_1)-\psi(y_1)\omega(y_1)|,|\eta_\lambda(y_2)-\psi(y_2)\omega(y_2)|\}\\
&\geq\frac{1}{2}\left(|\eta_\lambda(y_1)-\psi(y_1)\omega(y_1)|+|\eta_\lambda(y_2)-\psi(y_2)\omega(y_2)|\right)\\
&\geq \frac{|\psi(y_1)\omega(y_1)-\psi(y_2)\omega(y_2)|}{2}>0,
\end{align*}
where in the last step we used the triangle inequality and the fact that $\eta_\lambda(y_1)=\eta_\lambda(y_2)$ for all $\lambda\in\Lambda$. Therefore, we arrive at a contradiction.
\end{proof}

To proceed, we need some results on the spaces of continuous functions with values in unital locally convex Hausdorff $*$-algebras, i.e. locally convex Hausdorff spaces equipped with a compatible unital \mbox{$*$-al}\-gebra structure. Let $X$ be a locally compact Hausdorff space and let $V$ be a~unital locally convex Hausdorff $*$-algebra with the family of seminorms $\{s_\lambda\}_{\lambda\in\Lambda}$ generating its topology. Let $C(X,V)$ denote the space of all continuous functions $X\to V$. The $\kappa$-topology on $C(X,V)$ is defined using the seminorms
\begin{equation}\label{kapfun}
u\longmapsto \sup_{x\in K}s_\lambda(u(x)),
\end{equation}
where $K\subset X$ is compact. There is also a natural unital $*$-algebra structure on $C(X,V)$ coming from the unital $*$-algebra structure on $V$.
Let us state a generalization of the Stone--Weierstrass theorem that can be deduced from~\cite[Corollary~8]{v-t05}.
\begin{theorem}\label{newstone}
Let $X$ be a Hausdorff space, let $V$ be a~locally convex Hausdorff space, and let $W$ be a vector subspace of $C(X,V)$. If $W$ separates points, $C(X)\cdot W\subset W$, and $W(x):=\{w(x):w\in W\}$ is dense in $V$ for every $x\in X$, then $W$ is $\kappa$-dense in $C(X,V)$.\hfill$\blacksquare$
\end{theorem}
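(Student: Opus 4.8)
The plan is to deduce the statement from the classical module version of the Stone--Weierstrass theorem on a compact Hausdorff base, after localizing to the compact subsets of $X$ (where the compact-open topology lives anyway). First I would unwind what $\kappa$-density means: a basic $\kappa$-neighbourhood of $u_0\in C(X,V)$ is cut out by finitely many compact sets $K_1,\dots,K_m\subseteq X$, finitely many continuous seminorms $s_1,\dots,s_m$ on $V$, and tolerances $\varepsilon_1,\dots,\varepsilon_m>0$. Replacing these by the single compact set $K:=K_1\cup\dots\cup K_m$, the single continuous seminorm $s:=s_1+\dots+s_m$ on $V$, and $\varepsilon:=\min_i\varepsilon_i$, one sees that it is enough to show: for every compact $K\subseteq X$, every continuous seminorm $s$ on $V$, every $u\in C(X,V)$ and every $\varepsilon>0$ there is $w\in W$ with $\sup_{x\in K}s(u(x)-w(x))<\varepsilon$. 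Equivalently, writing $W|_K:=\{w|_K:w\in W\}\subseteq C(K,V)$, it suffices to prove that $W|_K$ is dense in $C(K,V)$ for the seminorms $v\mapsto\sup_{x\in K}s(v(x))$. Note that $K$, being a compact subspace of a Hausdorff space, is itself compact Hausdorff, hence normal, so on $K$ partitions of unity subordinate to finite open covers are available.

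The core step is to show that the restriction algebra $C(X)|_K$ is dense in $C(K)$. For this I would apply the ordinary (scalar) Stone--Weierstrass theorem to the closed subalgebra $\mathcal A_K:=\overline{C(X)|_K}\subseteq C(K)$: it is unital (it contains the restriction of $1\in C(X)$), it is self-adjoint because $C(X)$ is, and it separates the points of $K$. The last point is exactly where the hypothesis that $W$ separates points of $X$ is used: given $x\neq y$ in $K$ pick $w\in W$ with $w(x)\neq w(y)$ in $V$; since $V$ is locally convex Hausdorff its continuous dual separates its points by Hahn--Banach, so there is a continuous linear functional $\varphi$ on $V$ with $\varphi(w(x))\neq\varphi(w(y))$; then $\varphi\circ w\in C(X)$, and its restriction to $K$ lies in $\mathcal A_K$ and separates $x$ from $y$. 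Hence $\mathcal A_K=C(K)$. Because multiplication $C(K)\times C(K,V)\to C(K,V)$ is jointly continuous for the relevant seminorms, taking closures in $C(K,V)$ turns the inclusion $C(X)|_K\cdot W|_K\subseteq W|_K$ into $C(K)\cdot M\subseteq M$, where $M:=\overline{W|_K}$; thus $M$ is a closed $C(K)$-submodule of $C(K,V)$, and its fibres satisfy $M(x)\supseteq\{w(x):w\in W\}=W(x)$, which is dense in $V$ for every $x\in K$ by hypothesis.

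It then remains to run the module Stone--Weierstrass argument on the compact Hausdorff space $K$ and conclude $M=C(K,V)$. Given $u\in C(K,V)$, a continuous seminorm $s$ and $\varepsilon>0$: for each $x\in K$ choose $m_x\in M$ with $s(u(x)-m_x(x))<\varepsilon$ (possible since $M(x)$ is dense in $V$); the sets $U_x:=\{y\in K:s(u(y)-m_x(y))<\varepsilon\}$ form an open cover of $K$; extract a finite subcover $U_{x_1},\dots,U_{x_n}$ and a subordinate partition of unity $\phi_1,\dots,\phi_n\in C(K)$ with $\sum_i\phi_i\equiv 1$ on $K$; and put $m:=\sum_{i=1}^n\phi_i m_{x_i}\in M$. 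For every $y\in K$, using subadditivity and absolute homogeneity of $s$ together with the fact that $\phi_i(y)>0$ forces $y\in U_{x_i}$, one gets $s(u(y)-m(y))\le\sum_i\phi_i(y)\,s(u(y)-m_{x_i}(y))<\varepsilon\sum_i\phi_i(y)=\varepsilon$. Since $M$ is closed this yields $u\in M$, so $M=C(K,V)$, and therefore $W$ is $\kappa$-dense in $C(X,V)$.

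\emph{Main obstacle.} I expect the genuinely delicate point to be the combination of two things: $X$ is merely Hausdorff, so one cannot glue local approximations by a partition of unity on $X$ itself and must pass to compact subspaces, where normality comes for free; and the identity $\mathcal A_K=C(K)$ — that is, that $W|_K$ is a module over \emph{all} of $C(K)$ rather than over a possibly small subalgebra — which rests precisely on the separating-points hypothesis together with Hahn--Banach. Everything else is the routine partition-of-unity estimate. (Alternatively, the statement may simply be quoted from \cite[Corollary~8]{v-t05}.)
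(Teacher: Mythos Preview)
Your argument is correct, but note that the paper does not actually prove this theorem: the inline $\blacksquare$ signals that it is simply quoted as a consequence of \cite[Corollary~8]{v-t05}, exactly the alternative you mention in your final line. What you supply instead is a self-contained elementary proof, and the route you take is sound. The reduction to compact $K\subseteq X$ is forced by the definition of the $\kappa$-topology; the key observation---that the separating-points hypothesis on $W$, combined with Hahn--Banach on the locally convex Hausdorff space $V$, manufactures enough scalar functions $\varphi\circ w\in C(X)$ to separate points of $K$, so that scalar Stone--Weierstrass gives $\overline{C(X)|_K}=C(K)$---is exactly right and is where the hypotheses earn their keep; and the closing partition-of-unity estimate on the compact Hausdorff (hence normal) space $K$ is the standard module Stone--Weierstrass step. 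Compared with the paper's bare citation, your approach has the virtue of making transparent which hypothesis is used where (in particular that mere Hausdorffness of $X$ suffices because the approximation problem lives on compacta), at the cost of reproducing an argument already available in the literature.
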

Consider the functions
\begin{equation}\label{fvnct}
(fv)(x):=f(x)v,\qquad f\in C(X),\quad v\in V.
\end{equation}
Then from Theorem~\ref{newstone} it follows that
\begin{equation}\label{splitdense}
{\rm span}\{fv~:~f\in C(X),\; v\in V\}\quad\text{is $\kappa$-dense in}\quad C(X,V).
\end{equation}

The following lemma should be well known, but we present its proof for completeness.
\begin{lemma}\label{isoprod}
Let $Y$ and $Z$ be locally compact Hausdorff spaces. There are isomorphisms of topological $*$-algebras
\[
C(Y,C(Z))\cong C(Y\times Z)\cong C(Z,C(Y)).
\]
Here we consider the compact-open topology on $C(Y\times Z)$ and the topology given by seminorms~(\ref{kapfun}) for the other two $*$-algebras.
\end{lemma}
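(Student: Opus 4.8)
The plan is to exhibit the standard ``currying'' bijection and to check that it respects every piece of structure. Define
\[
\Phi:C(Y\times Z)\longrightarrow C(Y,C(Z)),\qquad \Phi(F)(y)(z):=F(y,z),
\]
together with its candidate inverse $\Psi:C(Y,C(Z))\to C(Y\times Z)$, $\Psi(u)(y,z):=u(y)(z)$. Viewed as maps of families of scalar-valued functions on the same underlying sets, $\Phi$ and $\Psi$ are manifestly mutually inverse; moreover, once we know they are well defined, they are unital $*$-algebra homomorphisms, since every algebraic operation on $C(Y,C(Z))$ and on $C(Y\times Z)$ is computed pointwise (in the $z$-, respectively $(y,z)$-, variables). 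Finally, the flip homeomorphism $Y\times Z\to Z\times Y$ induces, via~(\ref{funind}), a topological $*$-isomorphism $C(Z\times Y)\cong C(Y\times Z)$, so once the first isomorphism is established the second follows by symmetry. It thus remains to prove that $\Phi$ and $\Psi$ are well defined and are homeomorphisms.

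Next I would check well-definedness. For $\Phi$: fix $F\in C(Y\times Z)$; for each $y$ the slice $F(y,\cdot)$ lies in $C(Z)$, so one must show $y\mapsto F(y,\cdot)$ is continuous against each seminorm $p_L$ with $L\subset Z$ compact. Given $y_0$ and $\varepsilon>0$, a tube-lemma argument does it: for each $z\in L$ continuity of $F$ at $(y_0,z)$ yields open sets $U_z\ni y_0$, $V_z\ni z$ with $|F(y,z')-F(y_0,z')|<\varepsilon$ on $U_z\times V_z$; cover $L$ by finitely many $V_{z_i}$ and intersect the $U_{z_i}$. This uses only compactness of $L$ and the Hausdorff property. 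For $\Psi$: fix $u\in C(Y,C(Z))$ and a point $(y_0,z_0)$. Using local compactness of $Z$, choose a compact neighbourhood $L$ of $z_0$. By $\kappa$-continuity of $u$ there is a neighbourhood $U\ni y_0$ with $\sup_{z\in L}|u(y)(z)-u(y_0)(z)|<\varepsilon/2$ for $y\in U$, and since $u(y_0)\in C(Z)$ there is a neighbourhood $W$ of $z_0$, which we take inside $\operatorname{int}L$, with $|u(y_0)(z)-u(y_0)(z_0)|<\varepsilon/2$ on $W$. The triangle inequality then bounds $|u(y)(z)-u(y_0)(z_0)|$ by $\varepsilon$ on $U\times W$, so $\Psi(u)$ is jointly continuous. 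This joint-continuity step is the only place local compactness is used, and it is the main obstacle.

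It remains to match the topologies. The $\kappa$-topology on $C(Y,C(Z))$ is generated by the seminorms
\[
u\longmapsto \sup_{y\in K}p_L(u(y))=\sup_{y\in K}\ \sup_{z\in L}|u(y)(z)|,\qquad K\subset Y,\ L\subset Z\ \text{compact},
\]
which under $\Psi$ correspond exactly to the compact-open seminorms of $C(Y\times Z)$ associated with the ``box'' compacta $K\times L$. Since any compact $C\subset Y\times Z$ satisfies $C\subseteq\operatorname{pr}_Y(C)\times\operatorname{pr}_Z(C)$, a product of compacta (continuous images of $C$), the box seminorms already generate the compact-open topology on $C(Y\times Z)$. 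Hence $\Phi$ and $\Psi$ are mutually inverse homeomorphisms, and therefore isomorphisms of topological $*$-algebras; composing with the flip isomorphism produces $C(Y\times Z)\cong C(Z,C(Y))$, which finishes the proof.
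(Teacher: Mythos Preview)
Your argument is correct and follows essentially the same route as the paper: both use the currying bijection between $C(Y\times Z)$ and $C(Y,C(Z))$, verify it is a well-defined unital $*$-homomorphism in each direction, and obtain the second isomorphism by symmetry. The only difference is expository: the paper outsources the well-definedness of the uncurrying map (your $\Psi$) to the exponential law in Munkres~\cite[Theorem~46.11]{jr-m00} and declares the $\kappa$-continuity checks ``straightforward'' and ``clear'', whereas you supply the tube-lemma and box-compacta arguments explicitly.
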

\begin{proof}
Consider the following assignment
\[
\Phi:C(Y,C(Z))\longrightarrow C(Y\times Z),\qquad u\longmapsto (\,(y,z)\longmapsto (u(y))(z)\,).
\]
Since $Z$ is locally compact Hausdorff, the above map is well defined (see e.g.~\cite[Theorem~46.11]{jr-m00}). It is straightforward to check that $\Phi$ is a unital $\kappa$-continuous $*$-homomorphism. We define a map
\[
\Phi^{-1}:C(Y\times Z)\longrightarrow C(Y,C(Z)),\qquad \xi\longmapsto (\, y\longmapsto \xi(y,\cdot)\,).
\]
It is clear that $\Phi^{-1}$ is a unital $\kappa$-continuous $*$-homomorphism and that $\Phi$ and $\Phi^{-1}$ are mutually inverse. The proof of the other isomorphism is analogous.
\end{proof}

\begin{theorem}\label{frecom}
Let $G$ be a locally compact Hausdorff group acting on a locally compact Hausdorff space $X$. The $G$-action on $X$ is free if and only if
\[
{\rm span}\left\{f_1\alpha_{(\cdot)}(f_2)~:~f_1,f_2\in C_c(X)\right\}\quad\text{is $\kappa$-dense in}\quad C(G,C(X)),
\]
where $\alpha$ is the induced action of $G$ on $C_0(X)$. Here $(f_1\alpha_{(\cdot)}(f_2))(g):=f_1\alpha_{g}(f_2)$.
\end{theorem}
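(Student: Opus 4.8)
The statement relates freeness of the $G$-action on $X$ to $\kappa$-density of a certain subalgebra of $C(G,C(X))$. The natural strategy is to translate both sides into statements about a single map of spaces and then apply the earlier machinery: Lemma~\ref{kapinj} characterizing injectivity via $\kappa$-dense range, the identifications of Lemma~\ref{isoprod}, and the Stone--Weierstrass-type density~(\ref{splitdense}). First I would recall that freeness of the $G$-action on $X$ is equivalent to injectivity of the map
\[
\Theta:X\times G\longrightarrow X\times X,\qquad (x,g)\longmapsto (x,xg).
\]
Indeed, $\Theta(x,g)=\Theta(x',g')$ forces $x=x'$ and $xg=xg'$, so injectivity of $\Theta$ is exactly the statement that $xg=xg'$ implies $g=g'$ for all $x$, i.e. freeness. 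Note $\Theta$ is continuous, and $X\times G$, $X\times X$ are locally compact Hausdorff, so Lemma~\ref{kapinj} applies: $\Theta$ is injective if and only if $\Theta^*:C(X\times X)\to C(X\times G)$ has $\kappa$-dense range.

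The second step is to unwind $\Theta^*$ on elementary tensors. For $f_1,f_2\in C(X)$ write $f_1\otimes f_2\in C(X\times X)$ for $(x,x')\mapsto f_1(x)f_2(x')$. Then
\[
\Theta^*(f_1\otimes f_2)(x,g)=f_1(x)f_2(xg)=f_1(x)\,(\alpha_g(f_2))(x)=\bigl(f_1\,\alpha_g(f_2)\bigr)(x),
\]
so under the identification $C(X\times G)\cong C(G,C(X))$ from Lemma~\ref{isoprod} (with the roles of the two factors chosen so that $G$ is the ``outer'' variable), the element $\Theta^*(f_1\otimes f_2)$ becomes precisely $g\mapsto f_1\alpha_g(f_2)$, i.e. the generator $f_1\alpha_{(\cdot)}(f_2)$ appearing in the statement. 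Thus the span of $\{f_1\alpha_{(\cdot)}(f_2):f_1,f_2\in C_c(X)\}$ corresponds, under the isomorphism of topological $*$-algebras, to $\Theta^*\bigl(\mathrm{span}\{f_1\otimes f_2\}\bigr)$. Here one should be slightly careful that the statement uses $f_i\in C_c(X)$ while Lemma~\ref{kapinj} produces approximants from all of $C(X\times X)$; but since $C_c(X)\otimes C_c(X)$ is $\kappa$-dense in $C(X\times X)$ --- which follows from~(\ref{splitdense}) together with the fact that $C_c(X)$ is $\kappa$-dense in $C(X)$ and that multiplication is $\kappa$-continuous, or directly from Theorem~\ref{newstone} applied with $W=C_c(X)\otimes C_c(X)$ --- replacing $C(X)$ by $C_c(X)$ changes neither the $\kappa$-closure of the range nor the density question.

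Putting these together: the $G$-action on $X$ is free $\iff$ $\Theta$ is injective $\iff$ $\Theta^*$ has $\kappa$-dense range $\iff$ the image of $C_c(X)\otimes_{\mathrm{alg}}C_c(X)$ under $\Theta^*$ is $\kappa$-dense in $C(X\times G)\cong C(G,C(X))$ $\iff$ $\mathrm{span}\{f_1\alpha_{(\cdot)}(f_2):f_1,f_2\in C_c(X)\}$ is $\kappa$-dense in $C(G,C(X))$, which is the claim. For the forward direction one also needs to know that $\Theta^*$ is itself $\kappa$-continuous (so that it maps the $\kappa$-closure of $C_c(X)\otimes C_c(X)$ into the $\kappa$-closure of the image), which holds because pullback along a continuous map of locally compact Hausdorff spaces is $\kappa$-continuous, as in~(\ref{funind}). \textbf{The main obstacle} I anticipate is purely bookkeeping: pinning down the correct identification in Lemma~\ref{isoprod} (which factor plays the role of $Y$ and which of $Z$) so that the variable $g$ ends up ``outside'' matching $C(G,C(X))$, and verifying carefully that passing from $C(X)$ to $C_c(X)$ on the generating side is harmless for the $\kappa$-topology --- in particular that $\kappa$-density of a span is detected on the dense subspace $C_c(X\times X)$ of $C(X\times X)$. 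None of these steps is deep, but they require invoking $\kappa$-continuity of multiplication and pullback at the right moments; the conceptual content is entirely carried by Lemma~\ref{kapinj}.
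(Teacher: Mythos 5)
Your proposal is correct and follows essentially the same route as the paper: freeness is reformulated as injectivity of $(x,g)\mapsto(x,xg)$, Lemma~\ref{kapinj} together with Lemma~\ref{isoprod} converts this into $\kappa$-density of the range of the pullback, and the Stone--Weierstrass density of ${\rm span}\{f_1\times f_2\}$ with $f_i\in C_c(X)$ plus $\kappa$-continuity of the pullback identifies that range's closure with the closure of ${\rm span}\{f_1\alpha_{(\cdot)}(f_2)\}$. The bookkeeping points you flag (which factor is the outer variable, and passing from $C(X)$ to $C_c(X)$) are handled in the paper exactly as you anticipate.
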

\begin{proof}
First, we need some preparation. The action of $G$ on $X$ is free if and only if the following map
\[
F:X\times G\longrightarrow X\times X:\quad (x,g)\longmapsto (x,xg)
\]
is injective. By Lemma~\ref{kapinj} and Lemma~\ref{isoprod} it is also equivalent to the fact that the following unital $\kappa$-continuous $*$-homomorphism
\[
F^*:C(X\times X)\longrightarrow C(G,C(X)),\qquad (F^*(f)(g))(x):=f(x,xg),
\]
has $\kappa$-dense range. Observe that
\[
W:={\rm span}\{(f_1\times f_2)~:~f_1\,,f_2\in C_c(X)\},
\]
where $(f_1\times f_2)(x_1\,,x_2):=f_1(x_1)f_2(x_2)$, is $\kappa$-dense in $C(X\times X)$. We use the notation
\[
\mathcal{E}:={\rm span}\left\{f_1\alpha_{(\cdot)}(f_2)~:~f_1,f_2\in C_c(X)\right\}.
\]
We infer that $\mathcal{E}=F^*(W)$.

\noindent($\Rightarrow$) Assume that the $G$-action on $X$ is free, which is tantamount to the range of $F^*$ being $\kappa$-dense. Since $\mathcal{E}=F^*(W)$, it is $\kappa$-dense in $F^*(C(X\times X))$. In turn, this implies that $\mathcal{E}$ is $\kappa$-dense in $C(G,C(X))$.

\noindent($\Leftarrow$) Since $\mathcal{E}=F^*(W)\subset  F^*(C(X\times X))$, the $\kappa$-density of $\mathcal{E}$ implies the $\kappa$-density of the range of $F^*$, which is equivalent to the freeness of the action.
\end{proof}

In~\cite{da-e00}, Ellwood introduced a notion of a free action of a locally compact quantum group on \mbox{a~C*-al}\-gebra, which we now specify to actions of classical groups. Let $\alpha:G\to {\rm Aut}(A)$ be a~continuous action of a locally compact Hausdorff group on a C*-algebra. We say that $\alpha$ satisfies the {\em Ellwood condition} if
\[
{\rm span}\{a\alpha_{(\cdot)}(b)~:~a,b\in A\}\quad\text{is strictly dense in}\quad M(C_0(G,A)).
\]
Here $(a\alpha_{(\cdot)}(b))(g):=a\alpha_g(b)$. Under additional assumptions the Ellwood condition is equivalent to other versions of freeness known from the literature.
If $G$ is compact Hausdorff and $A$ is unital, then the Ellwood condition is equivalent to the saturation property introduced by Rieffel~\cite[Definition~1.6]{ma-r90}. The proof of the equivalence of the two conditions in a more general case of compact quantum group actions on unital C*-algebras can be found in~\cite[Theorem~1.1]{dcy-13}. The Ellwood condition is also equivalent to the algebraic Peter--Weyl--Galois condition known from the Hopf--Galois theory (note that the Hopf $*$-algebra in question have to be cosemisimple to correspond to a dense Hopf $*$-subalgebra of the C*-algebra of a compact quantum group). We refer the reader to~\cite{bdch-17} for a~proof of the latter equivalence.

A canonical example of an action satisfying the Ellwood condition is the $G$-action $\Delta$ on $C_0(G)$ induced by the (right) multiplication (see~\cite[Proposition~2.1]{da-e00} for a more general result). It is known that when $A=C_0(X)$, then $\alpha$ satisfies the Ellwood condition if and only if the induced action of $G$ on $X$ is free~\cite[Theorem~2.9(a)]{da-e00}. 

The above discussion and Theorem~\ref{frecom} prompt the following definition.
\begin{definition}
Let $\alpha:G\to{\rm Aut}(A)$ be a continuous action of a locally compact Hausdorff group on a C*-algebra. We say that $\alpha$ is {\em $\kappa$-free} if
\[
{\rm span}\{k_1\alpha_{(\cdot)}(k_2)~:~k_1,k_2\in K_A\}\qquad\text{is $\kappa$-dense in}\qquad C(G,\Gamma(K_A)).
\]
Here $(k_1\alpha_{(\cdot)}(k_2))(g):=k_1\alpha_g(k_2)$.
\end{definition}

The next result shows that the Ellwood condition always implies $\kappa$-freeness. In general, it is not clear whether $\kappa$-freeness implies the Ellwood condition, however, it is true for a~big class of examples.
\begin{proposition}
Let $\alpha:G\to {\rm Aut}(A)$ be a continuous action of a locally compact Hausdorff group on a C*-algebra. If $\alpha$ satisfies the Ellwood condition, then it is $\kappa$-free. The converse holds whenever $A$ is commutative or when $G$ is compact and $A$ is unital.
\end{proposition}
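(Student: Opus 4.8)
The plan is to establish the two implications separately. For the forward direction, I would show that the Ellwood condition implies $\kappa$-freeness by comparing the two density statements. The Ellwood span lives inside $M(C_0(G,A))$ with the strict topology, while the $\kappa$-freeness span lives inside $C(G,\Gamma(K_A))$ with the $\kappa$-topology. First I would recall that $K_A$ is dense in $A$ and that $\mathrm{span}\{k_1\alpha_{(\cdot)}(k_2): k_1,k_2\in K_A\}$ is $\kappa$-dense in $\mathrm{span}\{a\alpha_{(\cdot)}(b): a,b\in A\}$ viewed appropriately, since multiplication and the action are $\kappa$-continuous and $K_A$ is an ideal. Then I would use that $C_c(G,K_A)$-type elements are $\kappa$-dense in $C(G,\Gamma(K_A))$ via (\ref{splitdense}) applied with $V=\Gamma(K_A)$, together with the fact that the strict density in $M(C_0(G,A))$ restricts to give $\kappa$-density of the Ellwood span in the relevant completion; the key technical point is Lemma~\ref{diftop}, which identifies the strict and $\kappa$-topologies on norm-bounded subsets, allowing one to transfer the Ellwood density statement into a $\kappa$-density statement. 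I expect the main obstacle here to be bookkeeping about which ambient space one works in: one must carefully argue that the $\kappa$-closure of the Ellwood span inside $C(G,\Gamma(K_A))$ contains all of $C(G,\Gamma(K_A))$, using that $M(C_0(G,A))$ is $\kappa$-dense (not strictly dense) in $\Gamma(K_{C_0(G,A)})$ and that $C(G,\Gamma(K_A))$ can be identified with (or densely embedded in) $\Gamma(K_{C_0(G,A)})$ up to the isomorphisms of Lemma~\ref{isoprod}.

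For the converse when $A$ is commutative, I would invoke the classical picture. Write $A=C_0(X)$ for a locally compact Hausdorff $G$-space $X$. Then $K_A\cong C_c(X)$ and $\Gamma(K_A)\cong C(X)$, so $\kappa$-freeness of $\alpha$ is exactly the statement that $\mathrm{span}\{f_1\alpha_{(\cdot)}(f_2): f_1,f_2\in C_c(X)\}$ is $\kappa$-dense in $C(G,C(X))$, which by Theorem~\ref{frecom} is equivalent to freeness of the $G$-action on $X$. On the other hand, by~\cite[Theorem~2.9(a)]{da-e00}, freeness of the $G$-action on $X$ is equivalent to $\alpha$ satisfying the Ellwood condition. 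Chaining these equivalences gives the converse in the commutative case.

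For the converse when $G$ is compact and $A$ is unital, I would use that $\Gamma(K_A)=A=M(A)$ and that $K_A=A$, so $C(G,\Gamma(K_A))=C(G,A)=M(C_0(G,A))$ (the latter because $G$ compact makes $C_0(G,A)=C(G,A)$ unital). Moreover, when $G$ is compact the $\kappa$-topology on $C(G,A)$ is just the norm topology, which coincides with the strict topology on the unital C*-algebra $C(G,A)=M(C(G,A))$. Hence the $\kappa$-density defining $\kappa$-freeness and the strict density defining the Ellwood condition are literally the same statement about the same span in the same space, so the two notions coincide. I would spell out that all the relevant identifications ($\Gamma(K_A)=A$, $K_A=A$, $\kappa=\|\cdot\|$, $M(C(G,A))=C(G,A)$) hold precisely under the unital/compact hypotheses. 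The only delicate point is making sure the span $\mathrm{span}\{k_1\alpha_{(\cdot)}(k_2):k_1,k_2\in K_A\}$ with $K_A=A$ matches the Ellwood span verbatim, which is immediate. I anticipate that the genuinely hard step across the whole proposition is the first one — the general forward implication — precisely because it is the only place where one cannot reduce to a situation where $\Gamma(K_A)$, $M(A)$, and $A$ all coincide, and one must juggle the $\kappa$- and strict topologies on genuinely different completions using Lemma~\ref{diftop} and the identifications of Lemma~\ref{isoprod}.
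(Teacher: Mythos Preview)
Your treatment of the two converse cases is essentially the same as the paper's: for commutative $A$ you correctly reduce to Theorem~\ref{frecom} and \cite[Theorem~2.9(a)]{da-e00}, and for $G$ compact and $A$ unital you correctly observe that all the ambient objects and topologies collapse to $C(G,A)$ with its norm topology.

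The forward implication, however, is where your plan diverges from the paper and where it has a genuine gap. You propose to use Lemma~\ref{diftop} as the ``key technical point'' to pass from strict density to $\kappa$-density, but that lemma only identifies the two topologies on \emph{norm-bounded} subsets of $M(A)$, and the strictly convergent nets supplied by the Ellwood condition need not be bounded. The correct (and much simpler) observation is that, since $K_{C_0(G,A)}\subseteq C_0(G,A)$, every $\kappa$-seminorm on $M(C_0(G,A))$ is already a strict seminorm; hence the $\kappa$-topology is coarser than the strict topology there, and strict density immediately yields $\kappa$-density with no boundedness hypothesis. This is exactly what the paper uses.

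Second, your plan to identify $C(G,\Gamma(K_A))$ with $\Gamma(K_{C_0(G,A)})$ ``up to the isomorphisms of Lemma~\ref{isoprod}'' does not work in the noncommutative setting: Lemma~\ref{isoprod} is purely a statement about $C(Y\times Z)$ for locally compact Hausdorff spaces, and the paper never claims such an identification for general $A$. Instead, the paper invokes \cite[Corollary~3.4]{apt-73} to identify $M(C_0(G,A))$ with $C_b(G,M(A))$, observes that the $\kappa$-topology on the latter is given by the seminorms~(\ref{kapfun}), and then uses that $C_b(G,M(A))$ is $\kappa$-dense in $C(G,\Gamma(K_A))$ (since $M(A)$ is $\kappa$-dense in $\Gamma(K_A)$ and bounded continuous functions are $\kappa$-dense among all continuous functions). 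Combined with the easy fact that the $K_A$-span is $\kappa$-dense in the $A$-span, this chains together to give the result. You should replace Lemma~\ref{diftop} and Lemma~\ref{isoprod} in your argument by these two ingredients.
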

\begin{proof}
Since $\alpha$ satisfies the Ellwood condition and strict density implies $\kappa$-density, we have that
\[
{\rm span}\{a\alpha_{(\cdot)}(b)~:~a,b\in A\}\quad\text{is $\kappa$-dense in}\quad M(C_0(G,A)).
\]
Furthermore, from~\cite[Corollary~3.4]{apt-73}, we know that $M(C_0(G,A))=C_b(G,M(A))$ and the \mbox{$\kappa$-topol}\-ogy on $C_b(G,M(A))$ is given by the restriction of seminorms of the form~(\ref{kapfun}). The result follows by noticing that
\[
{\rm span}\{k_1\alpha_{(\cdot)}(k_2)~:~k_1,k_2\in K_A\}
\]
is $\kappa$-dense in 
\[
{\rm span}\{a\alpha_{(\cdot)}(b)~:~a,b\in A\}
\] 
and that $C_b(G,M(A))$ is $\kappa$-dense in $C(G,\Gamma(K_A))$.

If $A=C_0(X)$ is commutative, Theorem~\ref{frecom} ensures that $\kappa$-freeness of $\alpha$ is equivalent to freeness of the corresponding $G$-action on $X$, which is equivalent to the Ellwood condition by~\cite[Theorem~2.9(a)]{da-e00}. Next, if $G$ is compact and $A$ is unital, then $C(G,\Gamma(K_A))=C(G,A)$. The result follows from the fact that on $C(G,A)$ the strict topology and the $\kappa$-topology coincide.
\end{proof}


\subsection{Local triviality implies $\kappa$-freeness}

In this subsection, we prove that if $A$ is a weakly locally trivial $G$-C*-algebra then the considered $G$-action on $A$ is $\kappa$-free. First, we need the following notation. Assume that there is a~unital $G$-equivariant $\kappa$-continuous $*$-homomorphism
\[
\rho:C(\cC G)\longrightarrow \Gamma(K_A).
\]
We define the following unital $G$-equivariant $\kappa$-continuous $*$-homomorphism
\begin{equation}\label{tilderho}
\widetilde{\rho}:C(G,C(\cC G))\longrightarrow C(G,\Gamma(K_A)),\qquad \widetilde{\rho}(u)(g):=\rho(u(g)).
\end{equation}

\begin{theorem}
Let $\alpha:G\to {\rm Aut}(A)$ be a continuous action of a locally compact Hausdorff group on a C*-algebra $A$. If $A$ is a weakly locally trivial $G$-C*-algebra, then $\alpha$ is $\kappa$-free.
\end{theorem}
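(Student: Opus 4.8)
The plan is to reduce, via the trivializing homomorphisms $\rho_i$ and the invertible element $x=\kappa\lim_N\sum_{i=0}^N\rho_i({\rm t})$, to the $\kappa$-freeness of the translation action $\Delta$ of $G$ on $C_0(G)$; the latter holds because $\Delta$ satisfies the Ellwood condition (\cite[Proposition~2.1]{da-e00}) and hence is $\kappa$-free by the previous proposition, so that $\mathrm{span}\{\phi_1\Delta_{(\cdot)}(\phi_2):\phi_1,\phi_2\in C_c(G)\}$ is $\kappa$-dense in $C(G,C(G))$. Write $\mathcal{E}:=\mathrm{span}\{k_1\alpha_{(\cdot)}(k_2):k_1,k_2\in K_A\}$; we must show $\mathcal{E}$ is $\kappa$-dense in $C(G,\Gamma(K_A))$. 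By~(\ref{splitdense}), by $\kappa$-density of $C_c(G)$ in $C(G)$, and by $\kappa$-density of $K_A$ in $\Gamma(K_A)$ (which follows from the chain of $\kappa$-dense inclusions $K_A\subset A\subset M(A)\subset\Gamma(K_A)$), it is enough to $\kappa$-approximate the functions $g\mapsto f(g)\,m$ for $f\in C_c(G)$ and $m\in K_A$. Keeping $m$ in $K_A$ is deliberate: left and right multiplication by an element of $K_A$ turns $\kappa$-convergence in $\Gamma(K_A)$ into norm-convergence, which is how the unbounded-multiplier complications are kept in check below.

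The transport uses a square-root-weighted pullback. For $\phi\in C_b(G)$ set $\widehat\phi:=\sqrt{\rm t}\cdot(\phi\circ{\rm g})$; a priori defined only on ${\rm t}^{-1}((0,1])\cong(0,1]\times G$, it satisfies $|\widehat\phi|\le\sqrt{\rm t}\,\|\phi\|_\infty$ and so extends uniquely to an element of $C(\cC G)$ vanishing at the tip, and one has $\alpha_g(\widehat\phi)=\widehat{\Delta_g\phi}$ with $(\Delta_g\phi)(h)=\phi(hg)$, using that ${\rm g}([(t,h)]g)=hg$ and ${\rm t}$ is $G$-invariant. For $\phi_1,\phi_2\in C_c(G)$ the identity $\widehat{\phi_1}\cdot\alpha_g(\widehat{\phi_2})={\rm t}\cdot((\phi_1\,\Delta_g\phi_2)\circ{\rm g})$ holds on ${\rm t}^{-1}((0,1])$ and then on $C(\cC G)$ by density — the weight being ${\rm t}$, not ${\rm t}^{2}$, which is the reason for the square root. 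Since $\rho_i$ is $G$-equivariant and $K_A$ is an ideal of $\Gamma(K_A)$, for $k,k'\in K_A$ the elements $k\,\rho_i(\widehat{\phi_1})$ and $\rho_i(\widehat{\phi_2})\,k'$ lie in $K_A$, so
\[
g\longmapsto \big(k\,\rho_i(\widehat{\phi_1})\big)\,\alpha_g\big(\rho_i(\widehat{\phi_2})\,k'\big)=k\,\rho_i\big({\rm t}\cdot((\phi_1\,\Delta_g\phi_2)\circ{\rm g})\big)\,\alpha_g(k')
\]
lies in $\mathcal{E}$. The assignment $W\mapsto\big(g\mapsto k\,\rho_i({\rm t}\cdot(W(g)\circ{\rm g}))\,\alpha_g(k')\big)$ is $\kappa$-continuous (valued in $C(G,A)$, so every multiplication in sight is by a bounded element) on functions with values in $C_c(G)$, and on $W=\sum_l\phi_1^{(l)}\Delta_{(\cdot)}\phi_2^{(l)}$ it produces the $\mathcal{E}$-element $g\mapsto\sum_l k\,\rho_i(\widehat{\phi_1^{(l)}})\,\alpha_g(\rho_i(\widehat{\phi_2^{(l)}})\,k')$; combining this with the $\kappa$-density above applied to $W=f\cdot\psi_0$ (which has $C_c(G)$-values) gives $g\mapsto f(g)\,k\,\rho_i({\rm t}\cdot(\psi_0\circ{\rm g}))\,\alpha_g(k')\in\overline{\mathcal{E}}^\kappa$ for all $\psi_0\in C_c(G)$, $k,k'\in K_A$, $i\in\bN$.

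Assembly proceeds in three steps. First, fixing a nonnegative partition of unity $\{\psi_j\}\subset C_c(G)$ of $G$, one has $\sum_j{\rm t}\cdot(\psi_j\circ{\rm g})={\rm t}$ in $C(\cC G)$ (the sum is locally finite, hence eventually exact on each compact subset), so, applying $\kappa$-to-norm continuity of $h\mapsto k\,\rho_i(h)$ to the partial sums, $g\mapsto f(g)\,k\,\rho_i({\rm t})\,\alpha_g(k')\in\overline{\mathcal{E}}^\kappa$. Second, taking $k'=e_\lambda$ a bounded approximate unit of $A$ and letting $\lambda\to\infty$ — approximate units converge uniformly on the norm-compact set $\{\alpha_{g^{-1}}(k\,\rho_i({\rm t})):g\in C\}$ — yields $g\mapsto f(g)\,k\,\rho_i({\rm t})\in\overline{\mathcal{E}}^\kappa$ for every $k\in K_A$. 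Third, since $x^{-1}\big(\sum_{i\le N}\rho_i({\rm t})\big)\to x^{-1}x=1$ in the $\kappa$-topology and $m\in K_A$, multiplication on the left by $m$ converts this to $\sum_{i\le N}(m x^{-1})\rho_i({\rm t})\to m$ in norm; hence $g\mapsto f(g)m$ is the limit — uniform for $g$ in any compact set — of the sequence $g\mapsto\sum_{i\le N}f(g)\,(mx^{-1})\,\rho_i({\rm t})$, each of whose terms lies in $\overline{\mathcal{E}}^\kappa$ by the second step with $k:=mx^{-1}\in K_A$. Therefore $f\cdot m\in\overline{\mathcal{E}}^\kappa$, which completes the proof.

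The delicate part is not any single estimate but the uniform bookkeeping of which multiplications are merely $\kappa$-continuous (those by $x^{-1}$, or by $\rho_i$ of a function of ${\rm t}$) and which are $\kappa$-to-norm continuous (those by an element of $K_A$): the argument is arranged so that every potentially unbounded operation is immediately absorbed into a multiplication by a $K_A$-element, restoring norm estimates and hence uniformity in $g$. This is exactly what forces the two normalizations — the $\sqrt{\rm t}$ weight, so that products of the $\widehat\phi$'s sit at the "${\rm t}$-level" rather than the "${\rm t}^{2}$-level", and the reduction to $m\in K_A$. One should also verify the standard completeness and continuity facts about the function spaces $C(G,V)$ that are used to transfer $\kappa$-density through the maps above.
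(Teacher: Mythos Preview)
Your proof is correct and follows the same overall strategy as the paper's: reduce via~(\ref{splitdense}) to approximating elementary functions, transport the $\kappa$-freeness of the translation action $\Delta$ on $C_0(G)$ through the $\rho_i$ using the $\sqrt{\rm t}$-weighting (so that products of the lifted $C_c(G)$-elements sit at the ${\rm t}$-level), and assemble using $\sum_i\rho_i({\rm t})\to x$ with $x$ invertible.

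The substantive difference is in how membership in $K_A$ is secured. The paper reduces to approximating $f\cdot 1_{\Gamma(K_A)}$ (using $(1_{C(G)}v)\mathcal{E}\subset\mathcal{E}$) and then asserts directly that $\rho_i(\sqrt{\rm t}\,a_j^\lambda)\in K_A$, so that $\rho_i(\sqrt{\rm t}\,a_j^\lambda)\,\Gamma(\alpha)_{(\cdot)}(\rho_i(\sqrt{\rm t}\,b_j^\lambda))$ already lies in $\mathcal{E}$. You instead reduce to approximating $f\cdot m$ with $m\in K_A$, and sandwich by fixed $k,k'\in K_A$, forcing $k\,\rho_i(\widehat{\phi_1})$ and $\rho_i(\widehat{\phi_2})\,k'$ into $K_A$ via the ideal property of $K_A$ in $\Gamma(K_A)$. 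This buys you rigor on a point the paper glosses over: the bare claim $\rho_i(\sqrt{\rm t}\,a)\in K_A$ is not obvious and indeed fails in simple commutative examples (take $G=\bZ/2\bZ$ on $X=\bR\setminus\{0\}$ by sign with the single global trivialization $\rho_0(x)=[(1,\mathrm{sign}\,x)]$; then $\rho_0^*(\sqrt{\rm t}\,a)=a\circ\mathrm{sign}$ lies in $C_b(X)$ but not $C_c(X)=K_A$). The price you pay is the three-step assembly (partition of unity on $G$ to pass from ${\rm t}\cdot(\psi_0\circ{\rm g})$ to ${\rm t}$; approximate unit to remove $k'$; and the $x^{-1}$-weighted summation with $k:=mx^{-1}\in K_A$), together with the careful tracking of which multiplications are $\kappa$-continuous versus $\kappa$-to-norm continuous so that everything is uniform on compacta in $g$. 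That bookkeeping is exactly what is needed, and your argument carries it out correctly.
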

\begin{proof}
Throughout the proof we will use the notation 
\[
\mathcal{E}:={\rm span}\{k_1\alpha_{(\cdot)}(k_2)~:~k_1,k_2\in K_A\}.
\]
We have to prove that $\mathcal{E}$ is $\kappa$-dense in $C(G,\Gamma(K_A))$. Note that by~(\ref{splitdense}), we only have to show that the functions $fv$ (see~(\ref{fvnct})), where $f\in C(G)$ and $v\in \Gamma(K_A)$, can be approximated in the $\kappa$-topology by the elements of $\mathcal{E}$. Furthermore, since 
\[
fv=(1_{C(G)}v)(f1_{\Gamma(K_A)})\qquad\text{and}\qquad (1_{C(G)}v)\mathcal{E}\subset\mathcal{E},
\] 
it suffices to approximate functions of the form $f1_{\Gamma(K_A)}$. 

Since $A$ is a weakly locally trivial $G$-C*-algebra, there exist unital $G$-equivariant $\kappa$-continuous $*$-homomorphisms $\rho_i:C(\cC G)\to\Gamma(K_A)$, $i\in\bN$, such that 
\[
\sum_{i=0}^N\rho_i({\rm t})\quad\underset{N\to\infty}{\longrightarrow}\quad x\qquad\text{in the $\kappa$-topology},
\]
for some invertible $x\in \Gamma(K_A)$. The above implies that
\[
fx=\kappa\lim_{N\to\infty} \sum_{i=0}^Nf\rho_i({\rm t})=\kappa\lim_{N\to\infty}\sum_{i=0}^N\widetilde{\rho}_i(f{\rm t}),
\]
where each $\widetilde{\rho}_i$ is given by the formula~(\ref{tilderho}) and $f{\rm t}\in C(G,C(\cC G))$ is defined by~(\ref{fvnct}).
Next, let
\[
\mathcal{G}:={\rm span}\{f_1\Delta_{(\cdot)}(f_2)~:~f_1,f_2\in C_c(G)\}.
\]
Since the $G$-action on $C_0(G)$ satisfies the Ellwood condition, it is also $\kappa$-free. Hence, there is a net $(\gamma_\lambda)\subset\mathcal{G}$ such that
\[
f1_{C(G)}=\kappa\lim_\lambda\gamma_\lambda
\]
in $C(G,C(G))$. 
This in turn implies that
\[
f{\rm t}=\kappa\lim_\lambda {\rm t}\gamma_\lambda
\]
in $C(G,C(\cC G))$, where $({\rm t}\gamma_\lambda)(g):={\rm t}\gamma_\lambda(g)$, $g\in G$, and 
\[
{\rm t}\gamma_\lambda(g):\cC G\longrightarrow\bC:\qquad {\rm t}\gamma_\lambda(g)([(t,g')]):=\begin{cases}0~~& t=0,\\ t(\gamma_\lambda(g))(g')~~&t\neq 0.\end{cases}
\]
Since $\gamma_\lambda\in\mathcal{G}$ for every $\lambda$, there exist $m_\lambda\in\bN$ and $a^\lambda_j,b^\lambda_j\in C_c(G)$, with $j=0,1,\ldots,m_\lambda$, such that
\[
\gamma_\lambda=\sum_{j=0}^{m_\lambda}a^\lambda_j\Delta_{(\cdot)}(b^\lambda_j).
\]
Subsequently, since each $\widetilde{\rho}_i$ is a $\kappa$-continuous $*$-homomorphism and each $\rho_i$ is $G$-equivariant, we obtain that
\[
\sum_{i=0}^N\widetilde{\rho}_i(f{\rm t})=\kappa\lim_\lambda\sum_{i=0}^N\sum_{j=0}^{m_\lambda}\rho_i\left(\sqrt{\rm t}a^\lambda_j\right)\Gamma(\alpha)_{(\cdot)}\left(\rho_i\left(\sqrt{\rm t}b^\lambda_j\right)
\right).
\]
Since $\rho_i\left(\sqrt{\rm t}a^\lambda_j\right)$ and $\rho_i\left(\sqrt{\rm t}b^\lambda_j\right)$ belong to $K_A$ for all $\lambda$, this implies that the functions of the form $fx$ can be approximated in the $\kappa$-topology by the elements of $\mathcal{E}$. By invertibility of $x\in\Gamma(K_A)$, the same conclusion follows for the functions of the form $f1_{\Gamma(K_A)}$.
\end{proof}


\section*{Acknowledgments}\noindent

It is a pleasure to thank Piotr M.~Hajac for his continuous support and for many conversations about (quantum) principal bundles in the non-compact setting. Discussions with Alexandru Chirvasitu on local triviality, pro-C*-algebras, and classifying spaces and with Tomasz Maszczyk on unreduced cones of C*-algebras and the multiplier algebra of the Pedersen ideal, have been a~great help at the beginning of this project. I would also like to thank Tyrone Cutler for an exhaustive answer to the question about the topology of the unreduced cones and joins that I posted on Mathematics Stack Exchange. This work was financed by NCN grant 2020/36/C/ST1/00082 entitled {\it Noncommutative universal spaces for groups and quantum groups}.




\Addresses


\begin{thebibliography}{10}

\bibitem{apt-73}
Charles~A. Akemann, Gert~K. Pedersen, and Jun Tomiyama.
\newblock Multipliers of {$C\sp*$}-algebras.
\newblock {\em J. Functional Analysis}, 13:277--301, 1973.

\bibitem{m-a04}
Massoud Amini.
\newblock Locally compact pro-{$C^*$}-algebras.
\newblock {\em Canad. J. Math.}, 56(1):3--22, 2004.

\bibitem{at-08}
Alexander Arhangel'skii and Mikhail Tkachenko.
\newblock {\em Topological groups and related structures}, volume~1 of {\em
  Atlantis Studies in Mathematics}.
\newblock Atlantis Press, Paris; World Scientific Publishing Co. Pte. Ltd.,
  Hackensack, NJ, 2008.

\bibitem{afl-21}
Paolo Aschieri, Rita Fioresi, and Emanuele Latini.
\newblock Quantum principal bundles on projective bases.
\newblock {\em Comm. Math. Phys.}, 382(3):1691--1724, 2021.

\bibitem{alp-20}
Paolo Aschieri, Giovanni Landi, and Chiara Pagani.
\newblock The gauge group of a noncommutative principal bundle and twist
  deformations.
\newblock {\em J. Noncommut. Geom.}, 14(4):1501--1559, 2020.

\bibitem{bprs00}
Teresa Bates, David Pask, Iain Raeburn, and Wojciech Szyma\'{n}ski.
\newblock The {$C^*$}-algebras of row-finite graphs.
\newblock {\em New York J. Math.}, 6:307--324, 2000.

\bibitem{bch-94}
Paul Baum, Alain Connes, and Nigel Higson.
\newblock Classifying space for proper actions and {$K$}-theory of group
  {$C^\ast$}-algebras.
\newblock In {\em {$C^\ast$}-algebras: 1943--1993 ({S}an {A}ntonio, {TX},
  1993)}, volume 167 of {\em Contemp. Math.}, pages 240--291. Amer. Math. Soc.,
  Providence, RI, 1994.

\bibitem{bdch-17}
Paul~F. Baum, Kenny De~Commer, and Piotr~M. Hajac.
\newblock Free actions of compact quantum groups on unital {$C^*$}-algebras.
\newblock {\em Doc. Math.}, 22:825--849, 2017.

\bibitem{bhms-07}
Paul~F. Baum, Piotr~M. Hajac, Rainer Matthes, and Wojciech Szymanski.
\newblock Noncommutative geometry approach to principal and associated bundles,
  2007.

\bibitem{bm-93}
Thomasz Brzezi\'{n}ski and Shahn Majid.
\newblock Quantum group gauge theory on quantum spaces.
\newblock {\em Comm. Math. Phys.}, 157(3):591--638, 1993.

\bibitem{rc-b58}
R.~Creighton Buck.
\newblock Bounded continuous functions on a locally compact space.
\newblock {\em Michigan Math. J.}, 5:95--104, 1958.

\bibitem{bk-96}
Robert~J. Budzy\'{n}ski and Witold Kondracki.
\newblock Quantum principal fibre bundles: topological aspects.
\newblock {\em Rep. Math. Phys.}, 37(3):365--385, 1996.

\bibitem{h-c49}
Henri Cartan.
\newblock G\'en\'eralit\'es sur les espaces fibr\'es, {I}.
\newblock In {\em Seminaire Henri Cartan}, volume~2, pages 1--13. W.A.
  BENJAMIN, INC. New York, 1967.

\bibitem{cpt-21}
Alexandru Chirvasitu, Benjamin Passer, and Mariusz Tobolski.
\newblock Equivariant dimensions of graph {C}*-algebras.
\newblock {\em J. Funct. Anal.}, 280(6):108912, 2021.

\bibitem{a-c94}
Alain Connes.
\newblock {\em Noncommutative geometry}.
\newblock Academic Press, Inc., San Diego, CA, 1994.

\bibitem{cr-87}
Alain Connes and Marc~A. Rieffel.
\newblock Yang-{M}ills for noncommutative two-tori.
\newblock In {\em Operator algebras and mathematical physics ({I}owa {C}ity,
  {I}owa, 1985)}, volume~62 of {\em Contemp. Math.}, pages 237--266. Amer.
  Math. Soc., Providence, RI, 1987.

\bibitem{dcy-13}
Kenny De~Commer and Makoto Yamashita.
\newblock A construction of finite index {${\rm C}^*$}-algebra inclusions from
  free actions of compact quantum groups.
\newblock {\em Publ. Res. Inst. Math. Sci.}, 49(4):709--735, 2013.

\bibitem{m-d96}
Mi\'{c}o \DJ{}ur\dj{}evi\'{c}.
\newblock Geometry of quantum principal bundles. {I}.
\newblock {\em Comm. Math. Phys.}, 175(3):457--520, 1996.

\bibitem{enoo-09}
Siegfried Echterhoff, Ryszard Nest, and Herve Oyono-Oyono.
\newblock Principal non-commutative torus bundles.
\newblock {\em Proc. Lond. Math. Soc. (3)}, 99(1):1--31, 2009.

\bibitem{eo-74}
George~A. Elliott and Dorte Olesen.
\newblock A simple proof of the {D}auns-{H}ofmann theorem.
\newblock {\em Math. Scand.}, 34:231--234, 1974.

\bibitem{da-e00}
David~Alexandre Ellwood.
\newblock A new characterisation of principal actions.
\newblock {\em J. Funct. Anal.}, 173(1):49--60, 2000.

\bibitem{fs-16}
Uwe Franz and Adam Skalski.
\newblock {\em Noncommutative mathematics for quantum systems}.
\newblock Cambridge-IISc Series. Cambridge University Press, Delhi, 2016.

\bibitem{e-g19}
Eusebio Gardella.
\newblock Compact group actions with the {R}okhlin property.
\newblock {\em Trans. Amer. Math. Soc.}, 371(4):2837--2874, 2019.

\bibitem{ghtw-18}
Eusebio {Gardella}, Piotr~M. {Hajac}, Mariusz {Tobolski}, and Jianchao {Wu}.
\newblock {The local-triviality dimension of actions of compact quantum
  groups}.
\newblock {\em arXiv e-prints}, January 2018.

\bibitem{gn-43}
I.~Gelfand and M.~Naimark.
\newblock On the imbedding of normed rings into the ring of operators in
  {H}ilbert space.
\newblock In {\em {$C^\ast$}-algebras: 1943--1993 ({S}an {A}ntonio, {TX},
  1993)}, volume 167 of {\em Contemp. Math.}, pages 2--19. Amer. Math. Soc.,
  Providence, RI, 1994.
\newblock Corrected reprint of the 1943 original [MR {{\bf{5}}}, 147].

\bibitem{r-g03}
Roman Goebel.
\newblock Continuity of the cone functor.
\newblock {\em Topology Appl.}, 132(3):235--250, 2003.

\bibitem{ght-00}
Erik Guentner, Nigel Higson, and Jody Trout.
\newblock Equivariant {$E$}-theory for {$C^*$}-algebras.
\newblock {\em Mem. Amer. Math. Soc.}, 148(703):viii+86, 2000.

\bibitem{pm-h96}
Piotr~M. Hajac.
\newblock Strong connections on quantum principal bundles.
\newblock {\em Comm. Math. Phys.}, 182(3):579--617, 1996.

\bibitem{hkmz-11}
Piotr~M. Hajac, Ulrich Kr\"{a}hmer, Rainer Matthes, and Bartosz Zieli\'{n}ski.
\newblock Piecewise principal comodule algebras.
\newblock {\em J. Noncommut. Geom.}, 5(4):591--614, 2011.

\bibitem{hm-21}
Piotr~M. Hajac and Tomasz Maszczyk.
\newblock Cyclic-homology {C}hern-{W}eil theory for families of principal
  coactions.
\newblock {\em Comm. Math. Phys.}, 381(2):707--734, 2021.

\bibitem{gg-k88}
G.~G. Kasparov.
\newblock Equivariant {$KK$}-theory and the {N}ovikov conjecture.
\newblock {\em Invent. Math.}, 91(1):147--201, 1988.

\bibitem{c-k04}
Christian Kassel.
\newblock Quantum principal bundles up to homotopy equivalence.
\newblock In {\em The legacy of {N}iels {H}enrik {A}bel}, pages 737--748.
  Springer, Berlin, 2004.

\bibitem{kt-81}
H.~F. Kreimer and M.~Takeuchi.
\newblock Hopf algebras and {G}alois extensions of an algebra.
\newblock {\em Indiana Univ. Math. J.}, 30(5):675--692, 1981.

\bibitem{lvs-05}
Giovanni Landi and Walter van Suijlekom.
\newblock Principal fibrations from noncommutative spheres.
\newblock {\em Comm. Math. Phys.}, 260(1):203--225, 2005.

\bibitem{lprs-87}
M.~B. Landstad, J.~Phillips, I.~Raeburn, and C.~E. Sutherland.
\newblock Representations of crossed products by coactions and principal
  bundles.
\newblock {\em Trans. Amer. Math. Soc.}, 299(2):747--784, 1987.

\bibitem{lt-76}
A.~J. Lazar and D.~C. Taylor.
\newblock Multipliers of {P}edersen's ideal.
\newblock {\em Mem. Amer. Math. Soc.}, 5(169):iii+111, 1976.

\bibitem{e-m68}
Ernest Michael.
\newblock Local compactness and {C}artesian products of quotient maps and
  {$k$}-spaces.
\newblock {\em Ann. Inst. Fourier (Grenoble)}, 18(fasc. 2):281--286 vii (1969),
  1968.

\bibitem{j-m56}
John Milnor.
\newblock Construction of universal bundles. {II}.
\newblock {\em Ann. of Math. (2)}, 63:430--436, 1956.

\bibitem{mw-67}
Peter~D. Morris and Daniel~E. Wulbert.
\newblock Functional representation of topological algebras.
\newblock {\em Pacific J. Math.}, 22:323--337, 1967.

\bibitem{jr-m00}
James~R. Munkres.
\newblock {\em Topology}.
\newblock Prentice Hall, Inc., Upper Saddle River, NJ, 2000.
\newblock Second edition of [ MR0464128].

\bibitem{gj-m90}
Gerard~J. Murphy.
\newblock {\em {$C^*$}-algebras and operator theory}.
\newblock Academic Press, Inc., Boston, MA, 1990.

\bibitem{gl-n11}
Gregory~L. Naber.
\newblock {\em Topology, geometry, and gauge fields}, volume~25 of {\em Texts
  in Applied Mathematics}.
\newblock Springer, New York, second edition, 2011.
\newblock Foundations.

\bibitem{gk-p69}
Gert~Kjaerg\.{a}rd Pedersen.
\newblock Measure theory for {$C\sp{\ast} $} algebras. {III}, {IV}.
\newblock {\em Math. Scand.}, 25:71--93; ibid. 25 (1969), 121--127, 1969.

\bibitem{pr-84}
John Phillips and Iain Raeburn.
\newblock Crossed products by locally unitary automorphism groups and principal
  bundles.
\newblock {\em J. Operator Theory}, 11(2):215--241, 1984.

\bibitem{nc-p09}
N.~Christopher Phillips.
\newblock Freeness of actions of finite groups on {$C^*$}-algebras.
\newblock In {\em Operator structures and dynamical systems}, volume 503 of
  {\em Contemp. Math.}, pages 217--257. Amer. Math. Soc., Providence, RI, 2009.

\bibitem{rw-98}
Iain Raeburn and Dana~P. Williams.
\newblock {\em Morita equivalence and continuous-trace {$C^*$}-algebras},
  volume~60 of {\em Mathematical Surveys and Monographs}.
\newblock American Mathematical Society, Providence, RI, 1998.

\bibitem{ma-r90}
Marc~A. Rieffel.
\newblock Proper actions of groups on {$C^*$}-algebras.
\newblock In {\em Mappings of operator algebras ({P}hiladelphia, {PA}, 1988)},
  volume~84 of {\em Progr. Math.}, pages 141--182. Birkh\"{a}user Boston,
  Boston, MA, 1990.

\bibitem{hj-s90}
Hans-J\"{u}rgen Schneider.
\newblock Principal homogeneous spaces for arbitrary {H}opf algebras.
\newblock {\em Israel J. Math.}, 72(1-2):167--195, 1990.
\newblock Hopf algebras.

\bibitem{ne-s67}
N.~E. Steenrod.
\newblock A convenient category of topological spaces.
\newblock {\em Michigan Math. J.}, 14:133--152, 1967.

\bibitem{n-s51}
Norman Steenrod.
\newblock {\em The {T}opology of {F}ibre {B}undles}.
\newblock Princeton Mathematical Series, vol. 14. Princeton University Press,
  Princeton, N. J., 1951.

\bibitem{v-t05}
Vlad Timofte.
\newblock Stone-{W}eierstrass theorems revisited.
\newblock {\em J. Approx. Theory}, 136(1):45--59, 2005.

\bibitem{t-td08}
Tammo tom Dieck.
\newblock {\em Algebraic topology}.
\newblock EMS Textbooks in Mathematics. European Mathematical Society (EMS),
  Z\"{u}rich, 2008.

\bibitem{jhc-w48}
J.~H.~C. Whitehead.
\newblock Note on a theorem due to {B}orsuk.
\newblock {\em Bull. Amer. Math. Soc.}, 54:1125--1132, 1948.

\bibitem{wz-10}
Wilhelm Winter and Joachim Zacharias.
\newblock The nuclear dimension of {$C^\ast$}-algebras.
\newblock {\em Adv. Math.}, 224(2):461--498, 2010.

\bibitem{sl-w80}
S.~L. Woronowicz.
\newblock Pseudospaces, pseudogroups and {P}ontriagin duality.
\newblock In {\em Mathematical problems in theoretical physics ({P}roc.
  {I}nternat. {C}onf. {M}ath. {P}hys., {L}ausanne, 1979)}, volume 116 of {\em
  Lecture Notes in Phys.}, pages 407--412. Springer, Berlin-New York, 1980.

\end{thebibliography}
\end{document}